\title{Characterization of principal bundles:\\ the commutative case}
\author{William J. Ugalde\\[12pt]
{\small
Escuela de Matemática, Universidad de Costa Rica,
11501 San José, Costa Rica%
\thanks{Email: william.ugalde@ucr.ac.cr}}}
\DeclareMathOperator{\pr}{pr}       
\newcommand{\al}{\alpha}            
\newcommand{\bt}{\beta}             
\newcommand{\ga}{\gamma}            
\newcommand{\ka}{\kappa}            
\newcommand{\om}{\omega}            
\newcommand{\sg}{\sigma}            
\newcommand{\vf}{\varphi}           
\newcommand{\bN}{\mathbb{N}}        
\newcommand{\bR}{\mathbb{R}}        
\newcommand{\bT}{\mathbb{T}}        
\newcommand{\bZ}{\mathbb{Z}}        
\newcommand{\sA}{\mathcal{A}}       
\newcommand{\sB}{\mathcal{B}}       
\newcommand{\sF}{\mathcal{F}}       
\newcommand{\sO}{\mathcal{O}}       
\newcommand{\sU}{\mathcal{U}}       
\newcommand{\hookto}{\hookrightarrow} 
\renewcommand{\leq}{\leqslant}      
\newcommand{\less}{\setminus}       
\newcommand{\lt}{\triangleright}    
\newcommand{\nisom}{\nsimeq}        
\newcommand{\rt}{\triangleleft}     
\newcommand{\x}{\times}             
\renewcommand{\.}{\cdot}            
\renewcommand{\:}{\colon}           
\newcommand{\cupycup}{\cup\cdots\cup} 
\newcommand{\ovl}{\overline}        
\newcommand{\unl}{\underline}       
\newcommand{\wt}{\widetilde}        
\newcommand{\set}[1]{\{\,#1\,\}}    
\newcommand{\word}[1]{\quad\text{#1}\quad} 
\renewcommand{\section}{\@startsection{section}{1}{\z@}%
							 {-3.25ex \@plus -1ex \@minus -.2ex}%
							 {1.5ex \@plus.2ex}%
							 {\normalfont\large\bfseries}}
\renewcommand{\subsection}{\@startsection{subsection}{2}{\z@}%
							 {-3.25ex \@plus -1ex \@minus -.2ex}%
							 {1.5ex \@plus .2ex}%
							 {\normalfont\normalsize\bfseries}}
\numberwithin{equation}{section}        
\theoremstyle{plain}
\newtheorem{theorem}{Theorem}[section]  
\newtheorem{proposition}[theorem]{Proposition} 
\newtheorem{lemma}[theorem]{Lemma}      
\newtheorem{corollary}[theorem]{Corollary} 
\theoremstyle{definition}
\newtheorem{definition}[theorem]{Definition} 
\newtheorem{example}[theorem]{Example}  
\theoremstyle{remark}
\newtheorem*{remark}{Remark}
\DeclareRobustCommand{\qef}{
  \ifmmode 
  \else \leavevmode\unskip\penalty9999 \hbox{}\nobreak\hfill
  \fi
  \quad\hbox{\qefsymbol}}
\newcommand{\qefsymbol}{$\ominus$} 
\newlength{\bibitemsep}
\newlength{\bibparskip}
\let\oldthebibliography\thebibliography
\renewcommand\thebibliography[1]{%
  \oldthebibliography{#1}%
  \setlength{\parskip}{\bibitemsep}%
  \setlength{\itemsep}{\bibparskip}%
}  
\begin{document}

\maketitle

\begin{abstract}
A review of the characterization of principal bundles, through the
different properties of the action of a group and its related
canonical and translation maps, is presented. The work is divided in
three stages: a topological group acting on a topological space, a
discrete group acting on a smooth manifold, and a Lie group acting on
a smooth manifold.
\end{abstract}



\section{Introduction} 
\label{sec:intro}

In \cite{Brzezinski94}, motivated by the search for a way to pass
from the concept of a principal bundle to a possible substitute for
such an structure in the noncommutative geometry realm (see also
\cite{BaumHajacMatthesSzymanski} and~\cite{Kassel}), and making
reference to \cite[Definition 2.2, p.~41]{Husemoller}, one reads:
``The notion of principal bundle is equivalent to the existence of a
continuous translation map''.

The first intention of this paper is to establish that such a
statement is true in the topological case, insofar as the quotient
space has a Hausdorff topology. This is covered in
Section~\ref{sec:topological-bundles}. The second aim is to determine,
whether in the case of smooth principal bundles (i.e., having a smooth
manifold as a total space with a Lie group acting on it), the
translation map enjoys some sort of smoothness. This objective
requires to study the possibility of a smooth structure on the
quotient manifold, as well as on the spaces involved in the definition
of the translation map.

Before addressing the situation of a smooth total space, we study
discrete group actions on the total space in
Section~\ref{sec:discrete-case}. In
Section~\ref{sec:Husemoller-principal-bundles} a Lie group is acting
on a smooth manifold. As an immediate consequence of the results
already reviewed, versions of \cite[Theorem~8.1, p.~46]{Husemoller}
and \cite[Proposition 1.7, p.~18]{Berlineetal}, concerning cross
sections and equivariant maps on associated bundles, are presented in
Section~\ref{sec:equivariants} in the three settings dealt with
herein: a group acting on a topological space, a discrete group acting
on a smooth manifold, and a Lie group acting on a smooth manifold.

These notes are comprise a review of already known material; its
originality, if any, comes from how the ideas are presented.
Inspiration is taken from the references \cite{Brzezinski94,
BaumHajacMatthesSzymanski, Kassel} in which noncommutative geometric
replacements for principal fiber bundles are proposed and contain the
motivation driving these notes: to establish possible bases for the
eventual study of generalizations to the noncommutative setting of a
(commutative) principal bundle.

In \cite{Brzezinski94} a translation map is defined as follows:
``Assume that one has a manifold with a free action of a Lie
group~$G$. Every two points on an orbit are then related by a unique
element of~$G$. A translation map assigns such an element of $G$ to
any two points on an orbit.'' The translation map is a map from
$X \x_G X := \set{(z,y) : z,y \in X,\ \sO_z = \sO_y} \subseteq X \x X$
to~$G$. This definition contrasts with \cite{Husemoller}, who broaches
principal bundles on topological spaces, the maps involved are
continuous, and the subspace $X \x_G X$ inherits the relative topology
from $X \x X$. In the smooth setting, it will be necessary to address
the question of what sort of smooth structure, if any, there may be on
$X \x_G X$. Another possibility is to restrict the above translation
map to each orbit $\sO$, and to obtain a collection of maps from
$\sO \x \sO \subseteq X \x X$ to $G$. In this case, it is simpler to
provide each product of orbits $\sO \x \sO$ with a smooth structure.
As pointed out before, we want to understand which smooth properties
of the translation map for smooth principal bundles replace the
continuity conditions of topological principal bundles. The relevant
manifolds will be Hausdorff, so a first item will be to review the
Hausdorff topology on the quotient space, and later on, the existence
of a smooth structure on~it.

A more precise statement of the question would be, in the case of a
smooth principal bundle, what smoothness property is satisfied by the
translation map?

A continuous and free action of a topological group $G$ on a
topological space $X$ is equivalent to the existence of a homeomorphic
canonical map $\ga \: X \x G \to X \x_G X : (x,g) \mapsto (x,x\.g)$,
and in turn, it is equivalent to having a
continuous translation map $\tau \: X \x_G X \to G$. Further
requesting that the action be proper, amounts to have a Hausdorff
topology on the quotient space $X/G$ (or that $X \x_G X$ be closed in
$X \x X$). The motivation is the following. For a free and continuous
action to produce a canonical map $\ga$ which is a homeomorphism, the
map $\ga$ must be closed. Proper maps are also closed maps, and having
a proper action implies that the canonical map is proper and hence
closed. A free and proper action is called \textit{principal}. On
trying to move from topological spaces to smooth manifolds,
interesting questions to be addressed are: is there a smooth structure
on $X \x_G X$? and will the canonical map be a diffeomorphism?

\medskip

In Section~\ref{sec:topological-bundles} we study the notions of fiber
and principal bundle from \cite{Husemoller} and understand their
relation with continuous translation functions. We focus on the
properties of the action of the group on the total space and the
topological properties of the base space. A word of caution is needed.
What is called a principal $G$-space in \cite{Husemoller} is called a
\textit{quasi-principal} $G$-space herein. A similar situation happens
in~\cite{BaumHajacMatthesSzymanski}. The reason is simple: we have
added the prefix `quasi' since from
Lemma~\ref{lm:proper-action-characterization}, a quasi-principal
$G$-space is just one property short of having a proper action, i.e.,
what is missing is that the quotient topology on $X/G$ is Hausdorff
(or that $X \x_G X$ is closed in $X \x X$).

The main result of Section~\ref{sec:topological-bundles} is that in a
(quasi-)principal fiber $G$-bundle with Hausdorff base, the action of
$G$ on~$X$ is free and proper (i.e., principal) and hence each fiber
is homeomorphic to~$G$. Thus, the existence of a continuous
translation function alone does not characterize a principal
$G$-bundle (in the more general sense), it must also have a Hausdorff
base $X/G$. To illustrate that dual requirement, we present the Milnor
construction from \cite{Milnor} (see also \cite{Husemoller}), which
gives a universal (quasi) principal bundle in the sense that for any
(quasi) principal numerable%
\footnote{%
A principal $G$-bundle $(E,p,B)$ is \textit{numerable} provided there
is an open cover $\{U_i\}_{i\in I}$ of $B$ with a (locally finite)
partition of unity $\{u_i\}_{i\in I}$ such that
$u_i^{-1}((0,1]) \subseteq U_i$ and the bundle restricted to each
$U_i$ is trivial \cite{Husemoller}.}
$G$-bundle $(X,\pi,B)$, there is a (unique up to homotopy) continuous
map $f \: B \to B_G$ such that the pull-back bundle
$(f^*(E_G),\pi',B)$ is homeomorphic to $(X,\pi,B)$.

\medskip

For the case of a discrete group $G$ acting on a smooth manifold~$X$,
the first goal is to ensure that there is a smooth manifold structure
on the quotient $X/G$. We study covering space actions and discrete
actions. A covering space action provides a smooth atlas on~$X/G$; if
the action is moreover discrete, the quotient is a Hausdorff space. In
particular, each orbit is a submanifold of~$X$.

Theorem~\ref{thm:covering-principal-is-principal-discrete-case} says
that every covering $G$-bundle $(X,p,B)$ with a smooth and discrete
action is a smooth principal fiber $G$-bundle. Its corollary shows
that in a discrete $G$-bundle $(X,p,B)$ with a smooth action, $X/G$
has the structure of a smooth manifold locally diffeomorphic to~$X$
with continuous translation map. When $G$ is discrete,
$\tau \: X \x_G X \to G$ will be trivially smooth for any smooth
structure on $X \x_G X \to G$. The last part of
Section~\ref{sec:discrete-case} deals with a smooth fiber bundle with
fiber~$F$ associated to a discrete principal $G$-bundle.

In \cite[p.~55]{DuistermaatKolk} one reads: ``having a proper and free
action of $G$ on $M$ is equivalent to saying that $M$ is a principal
fiber bundle with structure group $G$.'' This makes reference to a
$C^k$ action, which implies a $C^k$ manifold structure on the base
space $M/G$. It is straightforward to derive the equivalent result for
the smooth situation. This result is known as the Quotient Manifold
Theorem that can be read in several places with different approaches,
like \cite{LeeJeffrey} and~\cite{Lee}. It replaces the requirement of
the quotient space being Hausdorff in the topological setting.

\medskip

Section~\ref{sec:Husemoller-principal-bundles} takes the notion of
principal bundle on topological spaces, and brings it to the smooth
situation with a Lie group action, asking what sort of smoothness is
obtained for the translation map.
Corollary~\ref{cor:diffeomorphic-orbits} establishes this property: if
a Lie group $G$ acts properly, freely and smoothly on a smooth
manifold~$X$, then the restriction of the translation map to each
orbit, $\tau_x \: x\.G \to G : x\.g \mapsto \tau(x,x\.g) = g$, is
smooth. This coincides with the interpretation of the translation map,
as a collection of smooth maps
$(x\.G) \x (x\.G) \to G : (x',x'') \mapsto \tau(x',x'')$, for
$x \in X$. This holds in any (smooth) principal $G$-bundle
$(X,p,B,G)$, through appropriate identifications given by a
diffeomorphism $f \: X/G \to B$. By Theorem~\ref{thm:QM}, smooth
principal actions are enough to obtain a smooth principal $G$-bundle.
Observe that this property is easily adapted to the continuous
situation: the restriction of the translation map to each orbit is
continuous.

The Quotient Manifold Theorem establishes the required manifold
structure on~$X/G$. An important tool is Lemma~\ref{lm:adapted-charts}
which gives the existence of charts adapted to the action of~$G$, for
$G$ a Lie group acting smoothly, freely, and properly on a smooth
manifold. These are charts with domains $U$ for which each $G$-orbit
is either disjoint from~$U$ or whose intersection with $U$ is a single
slice.

\medskip

Theorem~\ref{thm:sections-equiv-maps} of
Section~\ref{sec:equivariants}, taken from \cite{Husemoller}, states
that cross sections of an associated topological bundle correspond
bijectively with continuous equivariant maps from the original total
space to the fiber. Theorems
\ref{thm:smooth-cross-sections-G-discrete}
and~\ref{thm:smooth-cross-sections-G-Lie-group} extend to the cases in
which the group $G$ is a discrete group or a Lie group, respectively,
and the sections and equivariant maps involved are smooth:
\cite[Proposition 1.7, p.~18]{Berlineetal}.


\section{Principal actions on topological spaces} 
\label{sec:trans-map-topol-spaces}

A \textit{group action} of a group $G$ on a set $X$ is a homomorphism%
\footnote{%
With $e$ the identity of the group $G$, $\al(e)(x) = x$, for all
$x \in X$; and $\al(gh) = \al(g)\al(h)$, for all $g,h \in G$.}
$\al$ from $G$ to the group of bijective maps from $X$ to $X$; for
each $g \in G$, $\al(g)$ is a bijection on~$X$. $G$ is said to act
on~$X$ and the set $X$ is called a \textit{$G$-space}.

Given $x \in X$, the \textit{orbit through~$x$} is the set
$\sO_x = \set{\al(g)(x) : g \in G}$, i.e., the set of all possible
images of $x$ under the bijections that $G$ determines on $X$
through~$\al$. The space $X$ is the disjoint union of its orbits.

An action $\al$ of $G$ on $X$ can equivalently be described as a
mapping $\al \: G \x X \to X$ given by $\al(g,x) := \al(g)(x)$, with
the properties $\al(e,x) = x$ and $\al(gh,x) = \al(g,\al(h,x))$. With
this notation, the action is referred to as a \textit{left} action,
and it is written $g\.x = \al(g,x)$ (so as to have $e\.x = x$ and
$(gh)\.x = g\.(h\.x)$). The orbits are written as $G\.x = \sO_x$.

It is also possible to have a \textit{right} action,%
\footnote{%
Some references write $g \lt x$ for a left action and $x \rt g$ for a
right action. We will refrain from that, limiting the notation to
$g\.x$ and $x\.g$.}
simply by redefining $\al \: X \x G \to X$ by
$\al(x,g) := \al(g^{-1})(x)$. The $g^{-1}$ is needed to obtain a right
action. We use left or right actions without further notice; they
should be understood from the context.

The action is called \textit{free}
if $x\.g = x$ for some $x$ in $X$ implies $g = e$ (equivalently, if $g
\neq e$ then $x\.g \neq x$, for all $x \in X$).%
%
\footnote{%
In \cite{Giordano} there is a good collection of actions that are free
or unfree. There the concern is placed on \textit{minimal} actions:
those for which each orbit is dense.}

\begin{remark}
If $X$ is a topological space and $G$ is a topological group, the
action $X \x G \to X$ is required to be continuous. In consequence,
for each $g \in G$, the map $X \to X : x \mapsto x\.g$ is a continuous
map. It immediately follows that each such map is a homeomorphism with
inverse $x \mapsto x\.g^{-1}$. Even further, these maps can be
restricted to act from one orbit to itself,
$\sO_x \to \sO_x : x \mapsto x\.g$, producing homeomorphisms on the
orbit. In the smooth setting $X$ is a smooth manifold, $G$ is a Lie
group, the action $X \x G \to X$ is required to be smooth; for each
$g \in G$, the map $X \to X : x \mapsto x\.g$ is smooth and each of
these maps is a diffeomorphism with inverse $x \mapsto x\.g^{-1}$,
producing a diffeomorphism on any given orbit.
\end{remark}

Denote%
\footnote{%
It is important to understand this set $X \x_G X$. If the action is
transitive, since $\sO_x = X$ for all $x \in X$, it follows that
$X \x_G X = X \x X$. If the action is trivial: $\al(g) = 1_X$ for
every $g \in G$, then $X \x_G X$ is the diagonal in $X \x X$. Also,
$\{x\} \x \sO_x \subset X \x_G X$ and
$\sO_x \x \{x\} \subset X \x_G X$, for any $x \in X$, and evidently,
$(x,y) \in X \x_G X$ if and only if $(y,x) \in X \x_G X$, meaning that
$X \x_G X$ is symmetric in $X \x X$ with respect to the diagonal.}
$$
X \x_G X := \set{(x,y) : x,y \in X, \ \sO_x = \sO_y} \subseteq X \x X.
$$
The \textit{canonical map} corresponding to the action $\al$ of $G$
on~$X$ is the surjective map
$$
\ga \: X \x G \to X \x_G X : (x,g) \mapsto (x, x\.g).
$$

We wish to use the canonical map to understand the action.

\begin{lemma} 
\label{lm:free-action}
The action is free if and only if the canonical map is injective (and
hence, bijective).
\end{lemma}

\begin{proof}
If $(x,x\.g) = \ga(x,g) = \ga(y,h) = (y,y\.h)$, then $x = y$ and
$x = x\.hg^{-1}$. By freeness of the action, $h = g$ and~so $\ga$ is
injective. Reciprocally, if $\ga$ is injective and $x\.g = x$ for some
$x \in X$, then $\ga(x,g) = (x,x\.g) = (x,x) = \ga(x,e)$ implies
$g = e$, and thus $\al$ is free.
\end{proof}

When working with topological spaces, the subspace
$X \x_G X \subseteq X \x X$ has the relative topology. It is then
valid to ask about the continuity of the canonical map.

\begin{lemma} 
\label{lm:canonical-map-continuity}
The action is continuous if and only if the canonical map is
continuous.
\end{lemma}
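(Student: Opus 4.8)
The plan is to reduce the statement to two standard facts about the product and subspace topologies: a map into a product space is continuous precisely when each of its coordinate functions is continuous, and a map into a subspace is continuous precisely when that same map, regarded as landing in the ambient space, is continuous. Since $X \x_G X$ carries the relative topology from $X \x X$, I would first replace $\ga$ by the composite $\iota \circ \ga \: X \x G \to X \x X$, where $\iota \: X \x_G X \hookto X \x X$ denotes the inclusion. By the defining (universal) property of the subspace topology, $\ga$ is continuous if and only if $\iota \circ \ga$ is continuous, so the problem transfers to a statement about a map into the genuine product $X \x X$.

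Next I would read off the two coordinates of $\iota \circ \ga$. By construction $(\iota \circ \ga)(x,g) = (x, x\.g)$, so its first coordinate is the projection $\pr_1 \: X \x G \to X : (x,g) \mapsto x$, while its second coordinate is exactly the action map $\al \: X \x G \to X : (x,g) \mapsto x\.g$. The projection $\pr_1$ is continuous for the product topology with no hypothesis whatsoever, so by the coordinatewise criterion the map $\iota \circ \ga$ is continuous if and only if its second coordinate $\al$ is continuous.

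Chaining the two reductions yields the desired equivalence. For the forward direction one composes the assumed-continuous $\ga$ with the continuous inclusion $\iota$ and extracts its second coordinate to see that $\al$ is continuous. For the reverse direction one assembles the always-continuous $\pr_1$ together with the continuous $\al$ into a continuous map $X \x G \to X \x X$ whose image lies inside $X \x_G X$, and then invokes the subspace property to conclude that $\ga$ itself is continuous.

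I do not expect a serious obstacle here, as the argument is entirely formal. The only point meriting a word of care is the free passage between continuity into the subspace $X \x_G X$ and continuity into the ambient product $X \x X$; this rests on the universal property of the relative topology, and it is precisely what renders the first coordinate harmless and isolates the action $\al$ as the sole carrier of the continuity condition.
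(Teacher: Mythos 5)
Your proposal is correct and follows essentially the same route as the paper: both directions come down to factoring $\ga$ through the inclusion $X \x_G X \hookto X \x X$ and observing that the two coordinates of the resulting map into the product are the (always continuous) projection $\pr_1$ and the action $\al$. Your packaging via the universal property of the subspace topology is just a slightly more symmetric phrasing of the paper's argument, not a different proof.
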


\begin{proof}
Consider the action as a map $\al \: X  \x G \to X$ with
$\al = \pr_2 \circ \ga :X \x G \to X \x_G X \to X$, where $\pr_2$ is
the projection on the second factor. It is a continuous map since
$X \x_G X$ has the relative topology, and this projection is actually
the restriction to $X \x_G X$ of the continuous projection
$X \x X \to X$. Thus, the continuity of $\ga$ implies the continuity
of~$\al$.

Since the canonical map is the restriction (on the range) to 
$X \x_G X$ of the map 
\[
\wt{\ga} : X \x G \to X \x X : (x, g) \mapsto (x, x\.g),
\]
it is enough to verify the continuity of $\wt{\ga}$ to conclude that
of~$\ga$. Evidently the projection map $X \x G \to X$ is continuous.
If the action $\al \: X \x G \to X : (x,g) \mapsto x\.g$ is
continuous, and the projection
$\pr_1 \: X \x G \to X : (x,g) \mapsto x$ is clearly continuous, then
$\wt{\ga}$ is the product of these two continuous maps, making it a
continuous map, too.
\end{proof}

As a result of the previous two lemmas, the action is free and
continuous if and only if the canonical map is bijective (injective)
and continuous.

Now we place ourselves in the setting of an invertible canonical map,
i.e., that of a free action. Its inverse generates an important map
associated with a free action of a group $G$ on a space $X$, namely
the \textit{translation map}, as follows.%
\footnote{In \cite[pp.~255--256]{Bourbaki} the names `canonical map'
and `translation map' are interchanged in the sense that
the map $\vf \: C \to G$ that assigns to $(x,y) \in C$ the
unique element $g\in G$ with $g\.x = y$ is therein called the
\textit{canonical mapping}, where $C$ (our $X \x_G X$) is the graph of
the relation determined on $X$ by the action of~$G$. }
Consider the composition map
$$
X  \x_G X \stackrel{\ga^{-1}}{\longrightarrow} X  \x G
\stackrel{\pr_2}{\longrightarrow} G
: (x,y) = (x,x\.g) \mapsto (x,g) \mapsto g.
$$

Let $x,y \in X$ belong to the same orbit. There is a unique group
element $g \in G$ such that $y = x\.g$. The map
$\tau \: X \x_G X \to G$, that assigns to the pair $(x,x\.g)$ the
element $g$ is called the \textit{translation map}. Evidently,
$\tau(x,y) = \tau(y,x)^{-1}$, $\tau(x,x) = e$, and
$\tau(x,y) \tau(y,z) = \tau(x,z)$.

\begin{remark}
It is possible to restrict the translation map to  
$\tau|_x \: \sO_x \x \sO_x \to G$, for every $x \in X$. Each set
$\sO_x \x \sO_x$ inherits the structure from the orbit, such as a
topology or smooth structure. See the remark after
Lemma~\ref{lm:proper-action-characterization} and
Corollary~\ref{cor:diffeomorphic-orbits}.
\end{remark}

\begin{lemma} 
\label{lm:XxG-homeomorphism}
The translation map of a continuous action is continuous if and only
if the canonical map has a continuous inverse, i.e., it is a
homeomorphism, $X \x G \approx X \x_G X$.
\end{lemma}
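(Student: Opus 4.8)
The plan is to exploit the explicit coordinate formula $\ga^{-1}(x,y) = (x, \tau(x,y))$, which is available precisely because the action is free: by Lemma~\ref{lm:free-action} the canonical map $\ga$ is then bijective, and the defining relation $\tau(x,x\.g) = g$ together with $\ga(x,g) = (x,x\.g)$ yields exactly this form for the inverse. Both implications then reduce to the elementary fact that a map into a product space is continuous if and only if each of its coordinate maps is, combined with the continuity of the coordinate projections.

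For the forward direction, I would assume $\tau$ is continuous and read off the two coordinates of $\ga^{-1} \: X \x_G X \to X \x G$: the first, $(x,y) \mapsto x$, is the restriction to $X \x_G X$ of the continuous projection $\pr_1 \: X \x X \to X$; the second is $\tau$ itself, continuous by hypothesis. Hence $\ga^{-1}$ is continuous. Since the action is continuous, $\ga$ is continuous by Lemma~\ref{lm:canonical-map-continuity} and bijective by Lemma~\ref{lm:free-action}, so a continuous inverse promotes $\ga$ to a homeomorphism.

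For the converse, I would assume $\ga$ is a homeomorphism, so that $\ga^{-1}$ is continuous. Recalling the factorization $\tau = \pr_2 \circ \ga^{-1}$, where $\pr_2 \: X \x G \to G$ is the continuous projection onto the group factor, $\tau$ is a composite of continuous maps and is therefore continuous.

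The argument is essentially bookkeeping and presents no genuine obstacle; the single point deserving care is confirming the coordinate form of $\ga^{-1}$ stated above, from which both directions follow immediately. It is worth noting that freeness is implicitly in force throughout, since the translation map is only defined when $\ga$ is invertible.
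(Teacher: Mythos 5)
Your proposal is correct and follows essentially the same route as the paper: the converse via the factorization $\tau = \pr_2 \circ \ga^{-1}$, and the forward direction via the coordinate formula $\ga^{-1}(x,y) = (\pr_1(x,y), \tau(x,y))$, whose components are continuous. The added remarks on freeness and on the continuity of $\ga$ itself (via Lemmas~\ref{lm:free-action} and~\ref{lm:canonical-map-continuity}) are consistent with the paper's standing hypotheses.
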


\begin{proof}
Since $\tau = \pr_2 \circ \ga^{-1}$, the continuity of $\ga^{-1}$
implies that of~$\tau$. Reciprocally, assume that
$\tau \: X \x_G X \to G$ is continuous. Since
$\ga^{-1}(x,x\.g) = (\pr_1(x,x\.g), \tau(x,x\.g))$, $\ga^{-1}$ is the
product of the two continuous maps, $\pr_1 \: X \x_G X \to X$ and the
translation map.
\end{proof}

Thus, in the setting of free and continuous actions, the continuity of
the translation map identifies the space $X \x_G X$ with $X \x G$
homeomorphically.

When working with smooth manifolds an important question to ask is
whether there is a smooth structure on such a set $X \x_G X$?
Furthermore, is the canonical map a diffeomorphism? Is it equivalent
to requiring that the translation map be a smooth map? The answer to
these questions is dealt with in
Section~\ref{sec:Husemoller-principal-bundles}.

Next, we study free actions for which the canonical map is a
homeomorphism, and hence, the translation map is continuous. For the
canonical map of a free and continuous action to be a homeomorphism,
we lack the continuity of its inverse $\ga^{-1}$, equivalently, we are
missing that the canonical map $\ga$ be a closed map.
Corollary~\ref{cor:continuous-free-proper-action} states that if the
action is proper and free, then the canonical map is indeed a
homeomorphism.

A map $f \: X \to Y$ is \textit{proper} if and only if it is
continuous and $f \x 1_T \: X \x T \to Y \x T$ is a closed map, for
every topological space~$T$.

\begin{remark}
By taking $T$ as a set with one element, it follows that any proper
map is a closed map. Also, the composition of proper maps is
immediately a proper map. Furthermore, proper maps characterize
compact spaces $X$ in the sense that, if the map
$\bar p \: X \to \{p\}$ into a singleton is proper, then $X$ is
compact. Indeed, following \cite{Bourbaki} we show that every filter
in $X$ has a cluster point. Let $\sF$ be a filter on $X$ and for
$\om \notin X$ consider the filter on $X' = X \cup \{\om\}$ given by
$\sF' = \set{F \cup \{\om\} : F \in \sF}$. Note that $\om$ belongs to
the closure of $X$ in~$X'$ ($X$ is not closed in $X'$ as $\{\om\}$ is
not open in~$X'$). In $X \x X'$ consider the closed set $\ovl{D}$
given by the closure of the set
$D = \set{(x,x) : x \in X} \subseteq X \x X'$. By hypothesis, the
projection of $\ovl{D}$ on $X'$ is closed. Since $X$ is contained in
this projection we know that $\om$ belongs to this projection, i.e.,
there is an $x \in X$ with $(x,\om) \in \ovl{D}$. If $V$ is a
neighborhood of~$x$ in $X$ and $F \in \sF$, then
$\emptyset \neq (V \x F) \cap D$, implying that
$V \cap F \neq \emptyset$. Hence, $x$ is a cluster point for~$\sF$,
and $X$ is compact.
\end{remark}

The following characterization 
\cite[Proposition~7, p.~104]{Bourbaki} is suitable for our purposes.

\begin{lemma} 
\label{lm:proper-map}
If $X$ is Hausdorff and $Y$ is locally compact, a continuous map
$f \: X \to Y$ is proper if and only if the inverse image of a compact
set is compact.
\end{lemma}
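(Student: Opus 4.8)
The plan is to prove both implications of this characterization of proper maps. Since the definition in this paper is that $f$ is proper iff $f \x 1_T$ is closed for every space $T$, I would treat this as the working definition and show it is equivalent, under the hypotheses that $X$ is Hausdorff and $Y$ is locally compact, to the condition that preimages of compact sets are compact.

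For the forward direction, assume $f$ is proper and let $K \subseteq Y$ be compact. First I would observe that $f^{-1}(K)$ is closed in $X$ (since $f$ is continuous and compact subsets of the Hausdorff space $Y$ are closed, so $K$ is closed). To show $f^{-1}(K)$ is compact, I would apply the characterization of compactness via proper maps recorded in the remark above: it suffices to show that the restriction $f^{-1}(K) \to \{p\}$ is proper. The clean way is to note that $f^{-1}(K)$ is a closed subset of $X$ on which $f$ restricts to a proper map $f^{-1}(K) \to K$ (restrictions of proper maps to saturated closed sets remain proper), compose with the proper map $K \to \{p\}$ (proper because $K$ is compact, by the remark), and conclude that $f^{-1}(K) \to \{p\}$ is proper, hence $f^{-1}(K)$ is compact.

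For the reverse direction, assume preimages of compact sets are compact, and I would use local compactness of $Y$ crucially. To show $f \x 1_T$ is closed for an arbitrary $T$, I would take a closed set $C \subseteq X \x T$ and a point $(y_0,t_0)$ in the closure of $(f \x 1_T)(C)$, aiming to show it lies in the image. Choosing a compact neighborhood $N$ of $y_0$ in $Y$ (available since $Y$ is locally compact), I would restrict attention to the compact set $f^{-1}(N) \subseteq X$, over which $f$ behaves like a map with compact source. The idea is that any net in $C$ whose $f \x 1_T$-images converge to $(y_0,t_0)$ eventually lands (in its first coordinate) inside the compact set $f^{-1}(N)$, so a subnet of the first coordinates converges in $X$; combined with convergence of the $T$-coordinates to $t_0$, this produces a limit point of the net lying in the closed set $C$, whose image is $(y_0,t_0)$.

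The main obstacle I expect is the reverse direction, specifically handling the second factor $T$, which is an \emph{arbitrary} topological space with no compactness or even Hausdorff assumption. The first coordinate is controlled by the local compactness of $Y$ and the compact-preimage hypothesis, but the $T$-coordinate must be managed purely through the closedness of $C$ and convergence in $T$; one must be careful that passing to a subnet in the $X$-factor does not disturb the convergence $t \to t_0$ in the $T$-factor. Rather than develop net machinery from scratch, the more economical route is likely to invoke the equivalent formulation of properness already available: reduce closedness of $f \x 1_T$ to the statement that $f$ is \emph{universally closed}, and use the standard fact that a continuous map with the compact-preimage property into a locally compact (Hausdorff) space is proper in the Bourbaki sense, citing \cite[Proposition~7, p.~104]{Bourbaki} directly for this half since the lemma is explicitly attributed there.
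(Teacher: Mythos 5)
Your proposal is correct and follows essentially the same route as the paper: the forward direction by restricting the proper map to $f^{-1}(K) \to K$ and composing with the proper map $K \to \{p\}$ to invoke the compactness criterion from the preceding remark, and the reverse direction by a net argument showing that $(f \times 1_T)(C)$ is closed. The only substantive divergence is that you produce the needed compact set from a compact neighborhood of $y_0$ (which is where local compactness of $Y$ genuinely enters), whereas the paper instead asserts that the set of net values together with the limit is compact; your version is the more careful one, since that assertion is delicate for general nets (it is unproblematic for sequences, as the paper's own footnote concedes).
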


\begin{proof}
Observe that if $f \: X \to Y$ is a proper map and $S \subseteq Y$ is
any subset, then the map $f|_S \: f^{-1}(S) \to S$ is a proper map.
Indeed, if $T$ is any topological space and 
$C \subseteq f^{-1}(S) \x T$ is a closed subset, then 
$(f|_S \x 1_T)(C) = (f \x 1_T)(C) \cap (S \x T)$, which is closed
in $S \x T$. In this way, if $K \subseteq Y$ is a compact set, then
the composition of maps
$\bar p \circ f|_S \: f^{-1}(K) \to K \to \{p\}$ is a proper map,
meaning that $f^{-1}(K)$ is proper in~$X$, thanks to the previous
remark.

For the converse, let $T$ be a topological space and
consider a closed set $C \subseteq X \x T$. We must show that
$(f \x 1_T)(C)$ is closed in $Y \x T$. The complement of
$(f \x 1_T)(C)$ is not open in $Y \x T$ if and only if every open
neighborhood of some point $(y,t) \notin (f \x 1_T)(C)$ intersects
$(f \x 1_T)(C)$. This is equivalent to the existence of a net
$((y_j,t_j)) \subseteq (f \x 1_T)(C)$ with
$(f(x_j),t_j) = (y_j,t_j) \to (y,t)$, for some net
$((x_j,t_j)) \subseteq C$. The set $\set{y,y_j : j \in J}$ is compact
in~$Y$, so its inverse image under $f$ is compact in~$X$. In this way,
there is $x \in X$ and a subnet $(x_{j_k})$ such that
$(x_{j_k},t_{j_k}) \to (x,t)$, meaning that $(x,t) \in C$. By
continuity, $(y,t) = (f(x),t) \in (f \x 1_T)(C)$, contrary to
hypothesis. Therefore, $f$ is a proper map.%
\footnote{%
In the spaces are metrizable, like manifolds, it is enough to work
with sequences instead of nets.}
\end{proof}

A continuous action of $G$ on $X$ is by definition a \textit{proper
action} if and only if the extension of its canonical map,
$\wt{\ga} \: X \x G \to X \x X : (z,g) \mapsto (z,z\.g)$ is a proper
map \cite[III-4, Definition~1]{Bourbaki}. In particular, $\wt{\ga}$ is
a closed map. Since $X \x_G X$ has the relative topology, it follows
that the canonical map itself is also a continuous and closed map,
precisely the property we needed. Again and more generally, since
$X \x_G X$ has the relative topology, it follows that the canonical
map itself is a proper map.

The action $\al$ of $G$ on $X$ defines on $X$ an equivalence relation:
$y \sim x$ if and only if there is a $g \in G$ with $y = x\.g$. The
quotient space by this relation is $X/G$ and we denote
$\pi \: X \to X/G$ the surjective map $x \mapsto x\.G$. The set
$X \x_G X$ is not $X/G$, but rather the graph of the equivalence
relation that the action $\al$ defines on~$X$. The following statement
is from \cite{DuistermaatKolk}. We write the proof using our notation.

\begin{lemma} 
\label{lm:DuKo}
The quotient topology of $X/G$ is Hausdorff if and only if $X \x_G X$
is closed in $X \x X$.
\end{lemma}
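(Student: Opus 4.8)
The plan is to exploit two facts: that the quotient map $\pi \: X \to X/G$ is \emph{open}, and that $X \x_G X$ is exactly the preimage of the diagonal under $\pi \x \pi$. First I would record openness of~$\pi$. For any open $U \subseteq X$, its saturation $\pi^{-1}(\pi(U)) = \bigcup_{g\in G} U\.g$ is a union of the sets $U\.g = \set{u\.g : u \in U}$, each open because, by the Remark, $x \mapsto x\.g$ is a homeomorphism of~$X$. Since a quotient map sends saturated open sets to open sets, $\pi(U)$ is open; hence $\pi$ is open, and in particular $\pi \x \pi \: X \x X \to (X/G) \x (X/G)$ is continuous. Writing $\Dl$ for the diagonal of $(X/G) \x (X/G)$ and using that $\pi(x) = \sO_x$, one has $(x,y) \in X \x_G X \iff \sO_x = \sO_y \iff \pi(x) = \pi(y) \iff (\pi\x\pi)(x,y) \in \Dl$, that is,
$$
X \x_G X = (\pi \x \pi)^{-1}(\Dl).
$$

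For the forward implication, suppose $X/G$ is Hausdorff. Then its diagonal $\Dl$ is closed in $(X/G) \x (X/G)$, and since $\pi \x \pi$ is continuous, the displayed identity immediately gives that $X \x_G X$ is closed in $X \x X$. This direction is purely formal and uses only continuity.

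For the converse, suppose $X \x_G X$ is closed, so its complement is open, and take $\bar x \neq \bar y$ in $X/G$ with representatives $x,y \in X$. Then $(x,y) \notin X \x_G X$, so I would choose open sets $U \ni x$ and $V \ni y$ with $(U \x V) \cap (X \x_G X) = \emptyset$. By openness of~$\pi$, $\pi(U)$ and $\pi(V)$ are open neighborhoods of $\bar x$ and $\bar y$, and I would then check they are disjoint: a common point $\bar z = \pi(u) = \pi(v)$ with $u \in U$, $v \in V$ would force $\sO_u = \sO_v$, i.e.\ $(u,v) \in (U \x V) \cap (X \x_G X)$, a contradiction. Hence $X/G$ is Hausdorff.

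The step that genuinely carries the argument — and the only place where the group action enters beyond the set-level identity above — is the openness of~$\pi$ used in the converse. Without it the pushed-forward sets $\pi(U)$ and $\pi(V)$ need not be open, and the separation of $\bar x$ from $\bar y$ would collapse. The forward direction, by contrast, needs no openness at all, so I expect the openness of~$\pi$ to be the crux to state carefully.
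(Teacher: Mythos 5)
Your proof is correct and follows essentially the same route as the paper: the converse is exactly the paper's argument (openness of $\pi$, obtained from the saturation $\pi^{-1}(\pi(U)) = \bigcup_{g} U\.g$, plus a basic open box $U \x V$ in the complement of $X \x_G X$), and your forward direction merely repackages the paper's direct separation argument as the closed-diagonal criterion via the identity $X \x_G X = (\pi \x \pi)^{-1}(\Dl)$. One small wording slip: the continuity of $\pi \x \pi$ follows from the continuity of $\pi$ (automatic for the quotient topology), not from its openness, so the ``in particular'' you attach to the openness claim is a non sequitur, though harmless since the forward direction needs only continuity and the converse only openness.
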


\begin{proof}
Let $(x,y) \notin X \x_G X$. Since $x \nsim y$,
there are disjoint open sets $U_G$ and $V_G$ in
$X/G$, each containing the respective classes of~$x$ and of~$y$.
Consider the open sets in $X$, $U := \pi^{-1}(U_G)$ and
$V := \pi^{-1}(V_G)$, with $(x,y) \in U \x V$. If
$(z,w) \in (U \x V) \cap (X \x_G X)$, it would follow that
$\pi(z) \in U_G$, $\pi(w) \in V_G$, and $w \sim z$; whereby
$\pi(z) = \pi(w) \in U_G \cap V_G$, which is impossible. Thus,
$X \x_G X$ is a closed set in $X \x X$.

Before verifying the reciprocal statement, let $U$ be an open set
in~$X$. Now, $\pi(U)$ is open in $X/G$ if and only if
$\pi^{-1}(\pi(U)) = \set{y \in X : y \sim x \text{ for some } x \in U}
= \bigcup_{g\in G} \al(g)(U)$ is open in~$X$. This is true since each
$\al(g) \: X \to X$ is a homeomorphism.

Assume $X \x_G X$ is a closed subset of $X \x X$. Let $x\.G$, $y\.G$
be two different equivalence classes in $X/G$, with corresponding
representatives $x,y$ in~$X$, so that $(x,y) \notin X \x_G X$. Let 
$U$, $V$ be open sets in $X$ with $(x,y) \in U \x V$ and
$(U \x V) \cap (X \x_G X) = \emptyset$. Consider the open sets
$\pi(U)$ and $\pi(V)$ in $X/G$. Evidently, $x\.G \in \pi(U)$ and
$y\.G \in \pi(V)$. Lastly, if $\pi(U) \cap \pi(V) \neq \emptyset$,
then there would be $z \in X$ with $z\.G \in \pi(U) \cap \pi(V)$. That
is, there would be $g,h \in G$ with
$(z\.g,z\.h) = (z\.g,(z\.g)\.(g^{-1}h)) \in (U \x V) \cap (X \x_G X)$,
which is false. Therefore, the quotient topology of $X/G$ is
Hausdorff.
\end{proof}

Next, we characterize \textit{free and proper actions} in terms of the
Hausdorff topology on the quotient space and the continuity of the
translation map. Inspiration is taken from \cite{DuistermaatKolk} and
\cite[Proposition~6, p.~255]{Bourbaki}. The next result may be
compared with those sources.

\begin{lemma} 
\label{lm:proper-action-characterization}
A continuous and free action of a topological group~$G$ on a
topological space~$X$ is proper if and only if the quotient space
$X/G$ has a Hausdorff topology (i.e., $X \x_G X$ is closed in
$X \x X$) and the translation map $\tau$ is continuous (i.e., the
canonical map $\ga$ is a homeomorphism).
\end{lemma}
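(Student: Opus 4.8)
The plan is to prove the two directions of the equivalence separately, leaning on the characterization of properness already established. For the forward direction, assume the action is proper. By definition, $\wt{\ga} \: X \x G \to X \x X$ is a proper map, hence a closed map. I would first observe that the image $\wt{\ga}(X \x G)$ is exactly $X \x_G X$ (a point $(x,y)$ lies in the image iff $y = x \. g$ for some $g$, i.e. iff $\sO_x = \sO_y$); since $X \x G$ is closed in itself and $\wt{\ga}$ is closed, $X \x_G X = \wt{\ga}(X \x G)$ is closed in $X \x X$. By Lemma~\ref{lm:DuKo} this is equivalent to $X/G$ being Hausdorff. For the continuity of $\tau$, I would use that a proper map which is injective onto its image is a homeomorphism onto that image: freeness gives injectivity of $\ga$ (Lemma~\ref{lm:free-action}), and properness of $\wt{\ga}$ restricts (as in the first paragraph of the proof of Lemma~\ref{lm:proper-map}) to a proper, continuous bijection $\ga \: X \x G \to X \x_G X$. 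A continuous proper bijection is closed and hence a homeomorphism, so $\ga^{-1}$ is continuous, and by Lemma~\ref{lm:XxG-homeomorphism} the translation map $\tau$ is continuous.

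For the converse, assume $X/G$ is Hausdorff (equivalently $X \x_G X$ is closed in $X \x X$) and that $\tau$ is continuous (equivalently $\ga$ is a homeomorphism, again by Lemma~\ref{lm:XxG-homeomorphism}). I must show $\wt{\ga}$ is proper, i.e. that $\wt{\ga} \x 1_T$ is closed for every space $T$. The idea is to factor $\wt{\ga}$ as the composition of the homeomorphism $\ga \: X \x G \to X \x_G X$ with the inclusion $\iota \: X \x_G X \hookto X \x X$. A homeomorphism is proper, and the inclusion of a closed subspace into a space is proper; since the composition of proper maps is proper (noted in the remark after the definition of proper map), $\wt{\ga} = \iota \circ \ga$ is proper, which is exactly the definition of a proper action.

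The main obstacle is making the claim ``a continuous proper bijection is a homeomorphism'' and ``the inclusion of a closed subset is proper'' fully rigorous without circularity, since the ambient spaces here are not assumed locally compact, so Lemma~\ref{lm:proper-map} does not directly apply. For the inclusion of a closed subset $A \hookto X$, I would verify properness directly from the definition: for any $T$ and any closed $C \subseteq A \x T$, the set $C$ is also closed in $X \x T$ (as $A \x T$ is closed in $X \x T$), and $(\iota \x 1_T)(C) = C$ is its own image, hence closed. For the homeomorphism claim, I would note that $\ga$ being proper and surjective onto the closed set $X \x_G X$ makes it a closed map onto $X \x_G X$ (taking $T$ a singleton), and a continuous closed bijection is automatically a homeomorphism because its inverse sends closed sets to closed sets. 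These two facts together remove any reliance on local compactness and keep the argument self-contained.
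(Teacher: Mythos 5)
Your proposal is correct and follows essentially the same route as the paper: the forward direction extracts closedness of $X \x_G X$ from the closedness of $\wt{\ga}$ and upgrades the continuous, injective, proper (hence closed) map $\ga$ to a homeomorphism, while the converse uses the homeomorphism $\ga$ together with the closedness of $X \x_G X$ to conclude that $\wt{\ga}\x 1_T$ is closed for every $T$. Your repackaging of the converse as ``$\wt{\ga}=\iota\circ\ga$ with both factors proper'' is just a tidier bookkeeping of the paper's direct verification, and your explicit check that closed inclusions are proper makes fully visible a step the paper leaves implicit.
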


\begin{proof}
Let $\al$ be a free, continuous, and proper action of~$G$ on~$X$.

In particular, the map $\wt{\ga}: X \x G \to X \x X$
and the canonical map $\ga$ itself are proper maps. For every
one-point space $\{p\}$, the map
$\wt{\ga} \x 1_{\{p\}} \: X \x G \x \{p\} \to X \x X \x \{p\}$
is a closed map. Using a simple homeomorphism, it follows that
$\wt{\ga}$ is a closed map. Thus, $X \x_G X = \wt{\ga}(X \x G)$ is
closed in $X \x X$, and so the quotient topology on $X/G$ is
Hausdorff.

To show that the translation map $\tau$ is continuous is equivalent to
showing that $\ga$ is a homeomorphism. From the hypothesis we already
know that the canonical map $\ga$ is continuous, invertible, and
closed.

Lastly, assume that the action of $G$ on $X$ is free, continuous, and
that its canonical map $\ga$ is a homeomorphism of $X \x G$ onto the
closed set $X \x_G X$ in $X \x X$. Then $\wt{\ga}$ is continuous from
$X \x G \to X \x X$, since $\wt{\ga}(z,g) = (z,z\.g)$ is a composition
of continuous maps.

It immediately follows that the map
$\ga \x 1_T \: X \x G \x T \to X \x_G X \x T$ is a homeomorphism,
for any topological space $T$. Let $C$ be a closed set in
$X \x G \x T$. To finish, we must show that
$(\wt{\ga} \x 1_T)(C)$ is closed in $X \x X \x T$. Since $\ga$ is a
homeomorphism and $\ga(X \x G) = X \x_G X$, then
$(\wt{\ga} \x 1_T)(C) = (\ga \x 1_T)(C)$ is the image of a closed
set under a homeomorphism, and thereby is closed. Therefore the
extension $\wt{\ga}$ of the canonical map is a proper map.
\end{proof}

\begin{remark}
If $\tau$ is continuous, then its restriction to each orbit, 
$\sO_x \to G : x' \mapsto \tau(x,x')$ is also continuous. Moreover,
the maps $\sO_x \x \sO_x \to G : (x',x'') \mapsto \tau(x',x'')$ are
continuous. If the action is proper and free, then $X/G$ is Hausdorff
and each of these maps is continuous. This second property carries
through to the smooth situation.
Corollary~\ref{cor:diffeomorphic-orbits} shows that in the smooth
setting, the restriction of the translation map to each orbit,
$\sO_x \to G : x' \mapsto \tau(x,x')$, is smooth. These results (in
the topological and smooth situations) are indicators that, when
seeking an algebraic characterization to pass from (commutative)
principal bundles to noncommutative principal bundles, it is not the
translation map from $X \x_G X$ to $G$, but the restricted maps from
the products of each orbit with itself to the group, that form the
appropriate key to open this door. As regards the canonical map
itself, this ``fiber-wise'' characterization is equivalent to the
continuity of the restriction of its inverse map
$\ga_x^{-1} \: \sO \x \sO \to \sO \x G : (x,x\.g) \mapsto (x,g)$.
\end{remark}

\begin{remark}
In \cite{Biller} an action is called \textit{Cartan} is every stabilizer 
$G_x = \set{g \in G : g\.x = x}$ is compact (called quasi-compact
there) and for each $x \in X$ and every neighborhood $U \subseteq G$
of~$G_x$, there is a neighborhood $V \subseteq X$ of~$x$ such that
$\sg_G(V,V) = \set{g \in G : g\.V \cap V \neq \emptyset} \subseteq U$.
In the case of a free action, every stabilizer is $\{e\}$ and the
previous property becomes
\begin{equation}
\label{eq:Cartan-in-Biller} 
\forall\,x \in X, \ \forall\,U \in \sU(e), \ \exists\, V \in \sU(x)
: \sg_G(V,V) \subseteq U.
\end{equation}
In that reference, it is shown that an action of a topological
group~$G$ on a topological space~$X$ is proper iff the quotient space
$X/G$ has a Hausdorff topology and it is a \textit{Cartan action}.

In \cite{Biller2} it is proved that the action of $G$ on $X$ is proper
if and only if all stabilizers are compact (there again called
quasi-compact) and for all $x,x' \in X$ and every neighborhood
$U \subseteq G$ of $\sg_G(x,x') = \set{g \in G : g\.x = x'}$, there
are two neighborhoods $V,V' \subseteq X$ with $V \in \sU(x)$, 
$V' \in \sU(x')$ and $\sg_G(V,V') \subseteq U$. If the action is free,
then $\sg_G(x,x') = \{\tau(x,x')\}$ and the previous relation is
written~as:
\begin{equation}
\label{eq:Cartan-in-Biller2} 
\forall\, x,x' \in X, \ \forall \, U \in \sU(\tau(x,x')), \
\exists\, V \in \sU(x), \ \exists\, V' \in \sU(x')
: \sg_G(V,V') \subseteq U,
\end{equation}
with \eqref{eq:Cartan-in-Biller} equivalent to
\eqref{eq:Cartan-in-Biller2} and equivalent to the continuity of the
translation map. As a result of these relations and
Lemma~\ref{lm:proper-action-characterization}, an action is principal
if and only if the quotient space $X/G$ is Hausdorff and
\eqref{eq:Cartan-in-Biller} or~\eqref{eq:Cartan-in-Biller2} holds.

In \cite[Proposition 1.2]{Ellwood} it is shown that if $G$ is locally
compact and $X$ is locally compact and Hausdorff, then the action is
principal if and only if the canonical map is a closed embedding.
\end{remark}

\begin{corollary} 
\label{cor:continuous-free-proper-action}
If the (continuous) action of $G$ on $X$ is free and proper, its
canonical map is a homeomorphism from $X \x G$ onto the closed set
$X \x_G X$ in $X \x X$.
\end{corollary}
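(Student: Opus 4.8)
The plan is to treat this as an immediate consequence of Lemma~\ref{lm:proper-action-characterization}, whose forward implication already packages exactly the two assertions needed. The hypothesis furnishes a continuous, free, and proper action of $G$ on $X$, which is precisely the left-hand side of that biconditional.

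First I would invoke the ``only if'' direction of Lemma~\ref{lm:proper-action-characterization}: from properness, together with continuity and freeness, it yields simultaneously that the quotient topology on $X/G$ is Hausdorff and that the translation map $\tau$ is continuous. The parenthetical equivalences recorded in that lemma then translate these two facts into the language of the present statement. By Lemma~\ref{lm:DuKo}, Hausdorffness of $X/G$ is the same as $X \x_G X$ being closed in $X \x X$; and by Lemma~\ref{lm:XxG-homeomorphism}, continuity of $\tau$ is the same as the canonical map $\ga$ being a homeomorphism of $X \x G$ onto $X \x_G X$.

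Reading off both conclusions gives exactly the statement: $\ga$ is a homeomorphism onto $X \x_G X$, and the target $X \x_G X$ is a closed subset of $X \x X$. There is no genuine obstacle, since all of the substantive work—closedness of $\wt{\ga}$, the net argument underlying properness, and the identification $\tau = \pr_2 \circ \ga^{-1}$—was already carried out in the cited lemmas. The only point that deserves a word is that the bijectivity of $\ga$, needed for ``homeomorphism onto'' to be meaningful, is guaranteed by freeness through Lemma~\ref{lm:free-action}; thus the corollary is purely a matter of assembling the earlier results rather than proving anything new.
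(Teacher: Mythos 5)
Your proposal is correct and matches the paper exactly: the paper states this corollary without a separate proof, treating it as an immediate consequence of the forward direction of Lemma~\ref{lm:proper-action-characterization} together with the equivalences already recorded in Lemmas \ref{lm:free-action}, \ref{lm:XxG-homeomorphism}, and \ref{lm:DuKo}. Your assembly of those ingredients is precisely the intended argument.
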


An action that is proper and free is called a
\textit{principal action}.%
\footnote{%
In \cite{BaumHajacMatthesSzymanski} (mostly based on the general
reference \cite{Bourbaki}) an action is called \textit{principal} if
it is free and the map $X \x G \to X \x X$ given by 
$(x,g) \mapsto (x, x\.g)$ is a proper map. The properness of the
action in this case guarantees the continuity of the translation map.
This notion (see \cite[Proposition~6, p.~255]{Bourbaki}) is equivalent
with the notion of H.~Cartan \cite[p.~6-05]{Cartan}, who requires the
continuity of the translation map and that $X \x_{X/G} X$ be closed in
$X \x X$.}
In Section~\ref{sec:topological-bundles} we explore such actions.

In \cite{Ellwood} one reads:
``When the action of $G$ is both free and proper the quotient space
$X/G$ is Hausdorff and the quotient map $\pi_{X/G} \: X \to X/G$
determines a principal $G$-bundle over $X/G$. Of course, it is this
property that motivates the terminology \textit{principal group
action} and explains their essential role in mathematics and physics.
Outside of the principal context many things can go wrong. When $G$
acts properly but not freely, we retain a Hausdorff quotient but lose
the canonical fiber, i.e., $\pi_{X/G}^{-1}([x]_G) \nisom G$ for some
$x \in X$. On the other hand when $G$ acts freely but not properly,
the quotient topology may be non-Hausdorff and quite useless in
general.''


\section{Topological principal bundles} 
\label{sec:topological-bundles}

In this section we review the notions of fiber bundle and principal
bundle from \cite{Husemoller} and understand their relation with
continuous translation functions.

In \cite{Husemoller}, as a starting point, a \textit{bundle} is
defined as a triple $(E,p,B)$ with $E$ and $B$ topological spaces and
$p \: E \to B$ a continuous and surjective function. Each $p^{-1}(b)$
is called the fiber of the bundle over $b \in B$. Up to this point,
not much structure is required in such an object. Even further, the
different fibers are not required to be isomorphic to each other in
any sense.

Later on in \cite{Husemoller}, for each $G$-space $X$ (with $G$ a
topological group) one forms the bundle $(X,\pi,X/G)$, where $X/G$ is
the set of equivalence classes given by the relation $y \sim x$ if and
only if $x\.g = y$ for some $g \in G$, and by considering on $X/G$ the
quotient topology. Then by definition, any other bundle $(X,p,B)$ with
total space $X$ is called a \textit{$G$-bundle} if and only if
$(X,\pi,X/G)$ and $(X,p,B)$ are \textit{isomorphic} for some $G$-space
structure on $X$ by an isomorphism
$(1,f) \: (X,\pi,X/G) \to (X,p,B)$, where $f \: X/G \to B$ is a
\textit{homeomorphism}. In other words, $p = f \circ \pi$.

As a result, if $(X,p,B)$ is a $G$-bundle, then each fiber over $B$
corresponds to an orbit of the action of $G$ on~$X$. Indeed, if
$b \in B$ and $\pi(x) \in X/G$ with $f(\pi(x)) = b$ for some 
$x \in X$, take $y \in p^{-1}(b)$; then
$f(\pi(x)) = b = p(y) = f(\pi(y))$. So $\pi(x) = \pi(y)$; it follows
that $p^{-1}(b) \subseteq \sO_x$. On the other hand, if $y \in \sO_x$,
then $\pi(y) = \pi(x)$, and hence $b = f(\pi(x)) = f(\pi(y)) = p(y)$.
Thus, $\sO_x \subseteq p^{-1}(b)$.

Strictly speaking, the translation function is defined on the bundle
$(X,\pi,X/G)$. Given any other $G$-bundle $(X,p,B)$, $p^{-1}(p(x))$
corresponds to the orbit $\sO_x$, for each $x \in X$. Through this
identification it is evidently possible to define a translation map
for the $G$-bundle $(X,p,B)$. However, to characterize a principal
bundle from its so-defined translation map, it is enough to consider
only the case $(X,\pi,X/G)$. Nevertheless, we want to explore which
properties on the action of $G$ on $X$ make the orbits (and hence the
fibers of a $G$-bundle) look alike, and vice versa. First of all,
saying that all the fibers of a given $G$-bundle are in bijective
correspondence, amounts all orbits of the action of $G$ on $X$ being
in bijective correspondence. Since the orbits are subsets of $X$, they
are naturally topological spaces with the relative topology.

So, what sort of actions carry mutually homeomorphic orbits?

By Corollary~\ref{cor:continuous-free-proper-action}, if the action is
free and proper, then $\{x\} \x G$ is homeomorphic to
$\{x\} \x \sO_x$ in $X \x X$, for every $x \in X$. As a result, every
orbit is homeomorphic to~$G$. This homeomorphism can be described as
follows: for every $x \in X$ one may define a surjective map
$\al_x : G \to \sO_x$ by $\al_x(g) = x\.g$; if the action is free,
every such map is injective. Reciprocally, if $\al_x$ is injective for
each $x \in X$, then the action $\al$ is free. For the sake of
completeness we state this as a lemma.

\begin{lemma} 
\label{lm:continuous-free-and-proper-imply-homeomorphic-fibers}
If $\al$ is a free action, the group $G$ is in bijective
correspondence with each orbit in $X$. Furthermore, for a continuous
principal (free and proper) action, every orbit is \emph{homeomorphic}
to~$G$.
\end{lemma}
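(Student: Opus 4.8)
The plan is to prove the two assertions separately, since the first is purely set-theoretic and the second adds topology.

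For the first assertion, I would fix $x \in X$ and consider the map $\al_x \: G \to \sO_x$ defined by $\al_x(g) = x\.g$. By the very definition of the orbit $\sO_x = \set{x\.g : g \in G}$, this map is surjective. To obtain a bijection I must verify injectivity, and this is exactly where freeness enters: if $\al_x(g) = \al_x(h)$, that is $x\.g = x\.h$, then applying $h^{-1}$ on the right gives $x\.(gh^{-1}) = x$, and freeness forces $gh^{-1} = e$, hence $g = h$. Thus $\al_x$ is a bijection between $G$ and the orbit $\sO_x$, establishing the first claim. This step is routine and requires only the definition of a free action.

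For the second assertion, I would upgrade this bijection to a homeomorphism under the stronger hypothesis of a continuous principal (free and proper) action. Here the plan is to invoke Corollary~\ref{cor:continuous-free-proper-action}, which tells us that the canonical map $\ga \: X \x G \to X \x_G X$ is a homeomorphism. The key observation is that $\al_x$ factors through $\ga$: I would restrict $\ga$ to the slice $\{x\} \x G$, obtaining a homeomorphism onto its image $\{x\} \x \sO_x$ inside $X \x_G X$ (a restriction of a homeomorphism to a subspace, with the relative topology, is again a homeomorphism onto its image). Composing with the obvious homeomorphisms $G \approx \{x\} \x G$ and $\{x\} \x \sO_x \approx \sO_x$ (the projection onto the second factor), I recover $\al_x$ as a composite of homeomorphisms, hence a homeomorphism $G \approx \sO_x$.

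The main subtlety to handle carefully is that $\sO_x$ carries the relative topology as a subspace of $X$, and I must confirm that this matches the topology it inherits as the second-factor image $\{x\} \x \sO_x$ in $X \x_G X \subseteq X \x X$. Since both arise as subspaces built from the product topology restricted along continuous projections, the identifications $G \approx \{x\} \x G$ and $\{x\} \x \sO_x \approx \sO_x$ are standard homeomorphisms, and no genuine obstacle arises once Corollary~\ref{cor:continuous-free-proper-action} is in hand; the real content has already been absorbed into the properness hypothesis, which is what guarantees the continuity of $\ga^{-1}$ (equivalently, of the translation map, via Lemma~\ref{lm:XxG-homeomorphism}). I expect the only care needed is bookkeeping with these factor identifications rather than any deep argument.
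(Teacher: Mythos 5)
Your proposal is correct and follows essentially the same route as the paper: the paper likewise obtains the bijection from injectivity of $\al_x \: g \mapsto x\.g$ under freeness, and upgrades it to a homeomorphism by restricting the canonical map (a homeomorphism by Corollary~\ref{cor:continuous-free-proper-action}) to the slice $\{x\} \x G$, whose image is $\{x\} \x \sO_x$. The bookkeeping with the factor identifications that you flag is exactly the (routine) content the paper leaves implicit.
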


We use the following notation. If $y \in X$ lies in the same orbit
as~$x$, $y = x\.g$ for some $g \in G$, then
$\al_x^{-1}(y) = g = \tau(x,y) = \tau_x(y)$, where
$\tau_x \: \sO_x \to G$ is the map obtained by fixing the first
coordinate in the argument of~$\tau$.

The reciprocal result to
Lemma~\ref{lm:continuous-free-and-proper-imply-homeomorphic-fibers}
will state that if every orbit of a continuous action is homeomorphic
to $G$ via $\al_x \: G \to \sO_x : \al_x(g) = x\.g$, then the action
is free and proper. We already know that the bijectivity of these
homeomorphisms $\al_x$ is enough to conclude the freeness of the
action. So far what we are missing is the properness of the action. By
Lemma~\ref{lm:proper-action-characterization}, the question to be
answered is, whether every
$\al_x^{-1} = \tau_x \: \sO_x \to G : \tau_x (x\.g) = g$ being a
homeomorphism is enough to conclude first, that $X \x_G X$ is closed
in $X \x X$ (or that $X/G$ is Hausdorff thanks to
Lemma~\ref{lm:DuKo}), and second, that this translation map
$\tau \: X \x_G X \to G$ is continuous.

A $G$-space for which the continuous action of $G$ on $X$ is free and
the translation function $\tau \: X \x_G X \to G$ is continuous
(equivalently, the canonical map is a homeomorphism), will be called a
\textit{quasi-principal $G$-space} (\textit{principal $G$-space} in
\cite{Husemoller}).%
\footnote{%
We decided to add the prefix `quasi' since from
Lemma~\ref{lm:proper-action-characterization} we know that a
quasi-principal $G$-space is just one property short of having a
proper action, i.e., the quotient topology on $X/G$ need not be
Hausdorff (or $X \x_G X$ closed in $X \x X$). By
Lemma~\ref{lm:proper-action-characterization}, this use of the prefix
`quasi' is equivalent to that given in
\cite[p.~13]{BaumHajacMatthesSzymanski}, in the case of a Hausdorff
base space.}
We call a continuous, free action whose translation function
$\tau \: X \x_G X \to G$ is continuous a \textit{quasi-principal}
action.

We reserve the term \textit{principal $G$-space} (in contrast
to~\cite{Husemoller}) for a $G$-space for which the continuous action
of $G$ on $X$ is free and proper. In particular, the quotient topology
on $X/G$ is Hausdorff. Furthermore, a $G$-bundle $(X,p,B)$ for which
$X$ is a quasi-principal $G$-space shall be called in these notes a
\textit{quasi-principal $G$-bundle} (\textit{principal $G$-bundle}
in~\cite{Husemoller}).

Observe that in a quasi-principal $G$-bundle, the base space $B$ is
not required to be Hausdorff. Needless to say, if we ask all of our
topological spaces to be Hausdorff (as is the case when working with
smooth manifolds, for example), we shall not have to worry about the
quasi situation.

A \textit{fiber bundle} (with \textit{fiber} a topological space $F$)
is a bundle $p \: E \to B$, such that $p$ defines a \textit{local
product structure} over $E$ in the sense that each point of~$B$ has a
neighborhood $U$ and a homeomorphism $\psi \: U \x F \to p^{-1}(U)$,
in such a way%
\footnote{%
In some references (see \cite{BaumHajacMatthesSzymanski} and
references therein), the concept of \textit{locally trivial bundle}
requires that the fiber depend on the base point, allowing, as in the
case of a base space with several connected components, divers fibers
on different base points.}
that $p \circ \psi \: U \x F \to U$ is the projection $\pr_1$ on the
first factor. Note that $p$ is an open map; indeed, we can shrink the
neighborhoods $U$ to open sets $O$ and restrict the homeomorphism
$\psi$ to $\psi|_O : O \x F \to p^{-1}(O)$, and with
$p = \pr_1 \circ (\psi|_O)^{-1}$ we conclude that $p$ is a locally
open map. If $A$ is an open set in~$E$, each set $p^{-1}(O_{p(x)})
\cap A$ is open. From the relation
$p(A) = p\bigl( \bigcup_{x\in A} p^{-1}(O_{p(x)}) \cap A \bigr) 
= \bigcup_{x\in A} p\bigl( p^{-1}(O_{p(x)}) \cap A \bigr)$ the claim
that $p$ is an open map follows. For each $b \in B$, the map
$\psi|_b \: \{b\} \x F \to p^{-1}(b)$ is a homeomorphism. By an abuse
of notation, we regard it as a map $\psi|_b \: F \to p^{-1}(b)$. In
this way, if a point $b \in B$ belongs to $U_i \cap U_j$, then there
is a homeomorphism
$\psi_i|_b \circ \psi_j|_b^{-1} : p^{-1}(b) \mapsto p^{-1}(b)$, which
in general is not the identity on every $p^{-1}(b)$.

Imagine now that we want to take advantage of this product structure
to define an action $\al \: E \x G \to E$ by taking
$F = G$. If $(e,g) \in E \x G$ then $p(e) \in B$. Let $(U_i,\psi_i)$
be a pair in the local decomposition of $p$ with $p(e) \in U_i$. It is
tempting to define $\al(e,g) := \psi_i(p(e),g)$; but there is the
question of whether this is well defined. Consider another pair
$(U_j,\psi_j)$ with $p(e) \in U_i \cap U_j$; then
$\psi_j(p(e),g) = \psi_i(p(e),g)$ if and only if
$\psi_j|_{p(e)}(g) = \psi_i|_{p(e)}(g)$, if and only if
$g = \psi_j|_{p(e)}^{-1} \circ \psi_i|_{p(e)}(g)$. Hence, it need not
be true in general that a product structure on a fiber bundle defines
an action of $G$ on $E$, at least in the canonical way intended in
this paragraph. So we ask for a little more structure.

A \textit{quasi-principal fiber $G$-bundle}%
\footnote{%
We have decided to keep the adjective `fiber' to emphasize its
dependence on the chosen fiber. We use the ornament `quasi' since by
Proposition~\ref{pr:quasi-to-principal} it is required to have a
Hausdorff base for the action to be principal.
Proposition~\ref{pr:quasi-to-principal} will show that in a
quasi-principal fiber $G$-bundle with Hausdorff base, the action of
$G$ on~$X$ is indeed principal.}
is a fiber bundle $(X,p,B,G)$ for which there is a continuous right 
action of $G$ on $X$, and a coordinate representation
$\{(U_i, \psi_i)\}$ such that
\begin{equation}
\label{eq:principal-condition} 
\psi_i(b,gh) = \psi_i (b,g)\.h, 
\word{for all}  b \in U_i, \ g,h \in G.
\end{equation}

How close is the action of a quasi-principal fiber $G$-bundle to be
principal (free and proper)?

First, assume that $x\.g = x$ for some $x$ in $X$ and $g$ in $G$. Let
$b = p(x)$ and let $U \subset B$ with $b \in U$. Then
$x \in p^{-1}(U) = \psi(U \x G)$, and so, $x = \psi(b,h)$ for some
$h$ in $G$. In this way, 
$\psi(b,h) = x = x\.g = \psi(b,h)\.g = \psi(b,hg)$. Since $\psi$ is a
homeomorphism, it follows that $h = hg$, and hence $g = e$. Therefore,
the action is free.

Second, in a (quasi) principal fiber $G$-bundle the orbits coincide
with the fibers. Indeed, if $y = x\.g$ for some $g \in G$, then as in
the previous paragraph, $x = \psi(p(x),h)$ for some $h \in G$ and some
pair $(U,\psi)$. Here
$p(y) = p(x\.g) = p(\psi(p(x),h)\.g) = p(\psi(p(x),hg)) = p(x)$.
Meaning that every point in the orbit of~$x$ belongs to the same fiber
as~$x$. On the other hand, if $y$ is in the same fiber as~$x$, i.e.,
$p(y) = p(x)$, then taking a pair $(U,\psi)$, there are $g,h \in G$
with $y = \psi(p(x),h)$ and $x = \psi(p(x),g)$. Thus,
$y\.(h^{-1}g) = \psi(p(x),h)\.(h^{-1}g) = \psi(p(x),g) = x$.

So let us define $f \: X/G \to B$ by sending each orbit to its base
point in~$B$. In this way $f$ is an invertible map, and furthermore,
$p(x) = b$ entails $f(\pi(x)) = b$.

In $X/G$ there is the quotient topology, whereby a set of classes is
open in $X/G$ if and only if the union of all those classes is open in
$X$. Let $U \subset B$ be an open set. The set $f^{-1}(U)$ is the
union of the orbits (fibers) over all points in~$U$, which is open
in $X$, since this set is precisely
$p^{-1}(U) = \pi^{-1}(f^{-1}(U))$. Therefore $f^{-1}(U)$ is open
in~$X/G$ and thus $f$ is continuous.

Now let $O \subset X/G$ be an open set, so that $\pi^{-1}(O)$, which
is the union of all the classes in~$O$, is open in $X$. Since
$O = \pi(\pi^{-1}(O))$ one obtains
$f(O) = f(\pi(\pi^{-1}(O))) = p(\pi^{-1}(O))$. Since $p$ is an open
map, as already noted, so also is $f$, and therefore $f$~is a
homeomorphism. The next result follows immediately.

\begin{proposition} 
\label{pr:existence-of-f}
Any quasi-principal fiber $G$-bundle $(X,p,B,G)$ is a $G$-bundle.
\qed
\end{proposition}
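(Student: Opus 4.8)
The plan is to peel back the definition of a $G$-bundle and confirm that both of its constituents are already available. By definition, the bundle $(X,p,B)$ is a $G$-bundle exactly when $X$ carries a $G$-space structure and there is a homeomorphism $f \: X/G \to B$ with $p = f \circ \pi$, where $\pi \: X \to X/G$ is the quotient map for that action. The $G$-space structure costs nothing: it is precisely the continuous right action of $G$ on $X$ that is part of the data of a quasi-principal fiber $G$-bundle. Hence the whole proof amounts to exhibiting the homeomorphism $f$ and checking $p = f \circ \pi$.

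First I would set $f(\sO_x) := p(x)$, i.e.\ $f$ sends each orbit to the common base point of its elements. For this to be well defined and injective I need the orbits of the action to coincide exactly with the fibers of $p$; this is the essential input, and it follows from the compatibility relation~\eqref{eq:principal-condition} between the action and the local product charts, as verified in the paragraphs preceding the statement. Surjectivity of $f$ is immediate from surjectivity of $p$, and the relation $p = f \circ \pi$ holds by construction.

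It then remains to check that $f$ is a homeomorphism. For continuity, given an open $U \subseteq B$ I would observe that $\pi^{-1}(f^{-1}(U)) = p^{-1}(U)$ is open in $X$ (as $p$ is continuous), so $f^{-1}(U)$ is open in the quotient topology on $X/G$. For openness of $f$, given an open $O \subseteq X/G$ I would use $f(O) = p(\pi^{-1}(O))$ together with the fact that $p$ is an open map; the latter is the second essential input, and it is exactly the consequence of the local product structure established earlier. With $f$ both continuous and open (and bijective) it is a homeomorphism, so $(1,f) \: (X,\pi,X/G) \to (X,p,B)$ is the required isomorphism and $(X,p,B,G)$ is a $G$-bundle.

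The main point to get right is not any single estimate but the two structural facts funnelling into the argument: that orbits equal fibers (so that $f$ is even well defined) and that $p$ is open (so that $f^{-1}$ is continuous). Both are consequences of the quasi-principal fiber bundle axioms rather than of any properness or Hausdorffness of the base, which is why the conclusion holds at the ``quasi'' level without further hypotheses on $B$.
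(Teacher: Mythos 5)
Your proposal is correct and follows essentially the same route as the paper: the paper's argument is contained in the paragraphs preceding the statement, where it likewise verifies that orbits coincide with fibers via~\eqref{eq:principal-condition}, defines $f$ by sending each orbit to its base point, and checks continuity via $\pi^{-1}(f^{-1}(U)) = p^{-1}(U)$ and openness via $f(O) = p(\pi^{-1}(O))$ using that $p$ is an open map. You have correctly isolated the two structural inputs and the argument matches the paper's.
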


Back to our last question, how close is the action of a (quasi)
principal fiber $G$-bundle to be free and proper? By
Lemma~\ref{lm:proper-action-characterization}, properness accounts for
$X/G$ to be Hausdorff and the translation map to be continuous. Since
$X/G$ is homeomorphic to $B$, it is enough to request that our base
space $B$ be Hausdorff in order to have a proper action. We conclude
the following.

\begin{proposition} 
\label{pr:quasi-to-principal}
In a quasi-principal fiber $G$-bundle with Hausdorff base, the
action of $G$ on $X$ is principal (free and proper).
\qed
\end{proposition}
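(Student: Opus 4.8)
The plan is to deduce the result from Lemma~\ref{lm:proper-action-characterization}, which asserts that a continuous free action is proper exactly when the quotient $X/G$ is Hausdorff (equivalently, $X \x_G X$ is closed in $X \x X$) and the translation map $\tau$ is continuous. So it suffices to verify three ingredients: freeness of the action, a Hausdorff quotient, and a continuous translation map.

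Freeness has already been established in the discussion preceding the statement, using the local product structure: if $x\.g = x$, then writing $x = \psi_i(b,h)$ forces $h = hg$, whence $g = e$. For the Hausdorff quotient I would invoke Proposition~\ref{pr:existence-of-f}: the bundle is a $G$-bundle, so the induced map $f \: X/G \to B$ is a homeomorphism. Since $B$ is Hausdorff by hypothesis, so is $X/G$, and by Lemma~\ref{lm:DuKo} the set $X \x_G X$ is closed in $X \x X$.

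The substantive step is the continuity of $\tau$, which I would extract from the equivariance condition~\eqref{eq:principal-condition}. Recall that the orbits coincide with the fibers, so any $(x,y) \in X \x_G X$ satisfies $p(x) = p(y) =: b$. Choosing a trivializing chart $(U_i,\psi_i)$ with $b \in U_i$ and writing $\psi_i^{-1}(x) = (b,g)$ and $\psi_i^{-1}(y) = (b,h)$, the equivariance relation gives $x\.(g^{-1}h) = \psi_i(b,g)\.(g^{-1}h) = \psi_i(b,h) = y$, so that $\tau(x,y) = g^{-1}h$. On the open subset of $X \x_G X$ consisting of pairs whose common base point lies in $U_i$, the assignment $(x,y) \mapsto g^{-1}h$ is a composition of the continuous maps $\psi_i^{-1}$, the projection $\pr_2$, and the inversion and multiplication of the topological group~$G$, hence continuous. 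As the charts cover $B$, these open sets cover $X \x_G X$, and since all the local expressions compute the same globally defined $\tau$, continuity is a local matter and $\tau$ is continuous throughout.

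With freeness, a Hausdorff quotient, and a continuous translation map in hand, Lemma~\ref{lm:proper-action-characterization} yields that the action is proper; being also free, it is principal. I expect the continuity of $\tau$ to be the main point requiring care: freeness and the Hausdorff quotient are essentially bookkeeping on top of earlier results, whereas here one must correctly read off the local formula $\tau(x,y) = g^{-1}h$ from~\eqref{eq:principal-condition} and check that the local pieces glue. None of this is difficult, but it is precisely where the defining hypothesis of a quasi-principal fiber bundle is actually used.
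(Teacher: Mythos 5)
Your proposal is correct and follows the same overall route as the paper: establish freeness from the local trivializations, use the homeomorphism $f \: X/G \to B$ of Proposition~\ref{pr:existence-of-f} to transfer the Hausdorff property from $B$ to $X/G$, and then invoke Lemma~\ref{lm:proper-action-characterization}. The one place you go beyond the paper is the continuity of the translation map: the text preceding the proposition only verifies freeness and the homeomorphism $X/G \approx B$, and then asserts that a Hausdorff base ``is enough,'' leaving the continuity of $\tau$ --- the other half of the hypothesis of Lemma~\ref{lm:proper-action-characterization} --- implicit. Your local computation, reading off $\tau(x,y) = g^{-1}h$ from $\psi_i^{-1}(x) = (b,g)$, $\psi_i^{-1}(y) = (b,h)$ via the equivariance condition~\eqref{eq:principal-condition}, and gluing over the open cover of $X \x_G X$ by the sets $(p^{-1}(U_i) \x p^{-1}(U_i)) \cap (X \x_G X)$, supplies exactly the missing verification (and uses that orbits coincide with fibers, which the paper does prove). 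So your argument is not merely equivalent to the paper's; it is more complete at precisely the point where the defining hypothesis of a quasi-principal fiber $G$-bundle does real work.
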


If $(X,p,B)$ is a principal $G$-bundle and the topological group $G$
is Hausdorff, taking advantage of the local homeomorphisms
$\psi \: U \x G \to p^{-1}(U)$, we conclude that $X$ must be 
Hausdorff, too.

\begin{proposition} 
\label{pr:B-and-G-Hausdorff-implies-X-Hausdorff}
If $(X,p,B)$ is a principal $G$-bundle and the topological group $G$
is Hausdorff, then the total space $X$ is Hausdorff.
\end{proposition}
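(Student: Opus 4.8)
The plan is to separate any two distinct points $x,y \in X$ by disjoint open neighborhoods, distinguishing two cases according to whether they lie over the same point of $B$. Before the case split I would record the one structural fact that is needed: because the bundle is principal, the action of $G$ on $X$ is free and proper, so $X/G$ is Hausdorff by Lemma~\ref{lm:proper-action-characterization}; since $(X,p,B)$ is a $G$-bundle, $B \isom X/G$ (Proposition~\ref{pr:existence-of-f}), and hence $B$ is Hausdorff. This is exactly the ingredient that handles points lying in different fibers.

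If $p(x) \neq p(y)$, I would choose, using the Hausdorff property of $B$, disjoint open sets $U_B \ni p(x)$ and $V_B \ni p(y)$ in $B$. By continuity of $p$, the preimages $p^{-1}(U_B)$ and $p^{-1}(V_B)$ are open in $X$, contain $x$ and $y$ respectively, and are disjoint since $U_B \cap V_B = \emptyset$.

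If instead $p(x) = p(y) =: b$, I would exploit the local product structure. Pick a trivializing pair $(U,\psi)$ with $b \in U$, so that $\psi \: U \x G \to p^{-1}(U)$ is a homeomorphism, and write $\psi^{-1}(x) = (b,g_1)$, $\psi^{-1}(y) = (b,g_2)$. Since $x \neq y$ lie in the same fiber, their first coordinates agree and thus $g_1 \neq g_2$. Using that $G$ is Hausdorff, choose disjoint open sets $W_1 \ni g_1$ and $W_2 \ni g_2$ in $G$; then $\psi(U \x W_1)$ and $\psi(U \x W_2)$ are open in $p^{-1}(U)$, and therefore in $X$ since $p^{-1}(U)$ is open, contain $x$ and $y$, and are disjoint because $\psi$ is injective and $(U \x W_1) \cap (U \x W_2) = U \x (W_1 \cap W_2) = \emptyset$.

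The two cases are exhaustive, so $X$ is Hausdorff. The only point demanding a little care is the second case: the separating open sets in $X$ share the factor $U$, so their disjointness cannot come from the base coordinate but must be traced, through the injectivity of $\psi$, back to the disjointness of $W_1$ and $W_2$ in $G$. I do not anticipate a genuine obstacle; the entire argument is the standard observation that a (locally trivial) bundle with Hausdorff base and Hausdorff fiber has Hausdorff total space, the only nonroutine input being that $B \isom X/G$ is Hausdorff for a principal action.
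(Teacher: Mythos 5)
Your proof is correct and follows essentially the same two-case argument as the paper: separate points in different fibers via disjoint preimages from the Hausdorff base, and separate points in the same fiber by pushing disjoint open sets of $G$ through a local trivialization $\psi \: U \x G \to p^{-1}(U)$. The only cosmetic difference is that you work directly with $\psi^{-1}$ on the fiber where the paper invokes the homeomorphism $\sO \to G$ of Lemma~\ref{lm:continuous-free-and-proper-imply-homeomorphic-fibers}, and you make explicit the step (left implicit in the paper) that $B \isom X/G$ is Hausdorff because the action is free and proper.
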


\begin{proof}
Let $x \neq y$ be two points in $X$. If $p(x) \neq p(y)$, then we can
consider disjoint open sets $U_x \ni p(x)$, $U_y\ni p(y)$ in $B$
and homeomorphisms $\psi_x \: U_x \x G \to p^{-1}(U_x)$,
$\psi_y \: U_y \x G \to p^{-1}(U_y)$ to conclude that the disjoint
open sets $p^{-1}(U_x)$ and $p^{-1}(U_y)$ separate the points $x$
and~$y$.

In the case $p(x) = p(y)$, $x$ and $y$ belong to the same orbit~$\sO$.
By
Lemma~\ref{lm:continuous-free-and-proper-imply-homeomorphic-fibers},
there is a homeomorphism $\vf \: \sO \to G$, while $G$ is Hausdorff.
Considering disjoint open sets $V_x$ and $V_y$ in $G$ that separate
the image points $\vf(x)$ and $\vf(y)$ in~$G$. By shrinking the
homeomorphism $\psi \: U \x G \to p^{-1}(U)$, with $p(x) = p(y) \in
U$, to the respective domains $U \x V_x$ and $U \x V_y$, we obtain
disjoint open sets $\psi(U \x V_x)$ and $\psi(U \x V_y)$ that separate
$x$ and $y$ in~$X$.
\end{proof}

\begin{remark}
As was noted before, in any fiber bundle $(X,p,B,G)$,
$\psi|_b \: G \to p^{-1}(b)$ is a homeomorphism. Using the translation
map of Section~\ref{sec:trans-map-topol-spaces} we can observe the
following. For a given $x \in p^{-1}(b)$, consider the bijective map
$u \: G \to p^{-1}(b) : g \mapsto x\.g$. Its inverse function $u^{-1}$
is $x' \mapsto \tau(x,x')$, which is continuous thanks to
Lemma~\ref{lm:proper-action-characterization}. Thus $u$ is a
homeomorphism and the translation map (restricted on its first entry
to a point~$x$) provides the homeomorphism between the fiber over
$p(x)$ and the group~$G$.
\end{remark}

Next we want to reproduce
\cite[Proposition~1.20]{BaumHajacMatthesSzymanski} about local
triviality being equivalent to the existence of local sections. By the
paragraph before Proposition~\ref{pr:existence-of-f}, it is enough to
consider the case $(X,\pi,X/G)$. If the action of $G$ on $X$ is
continuous and free and if $(X,\pi,X/G)$ has a coordinate
representation satisfying \eqref{eq:principal-condition}, then for
every $x \in X$ there is an open set $U \ni \pi(x)$ and a
homeomorphism $\psi \: U \x G \to \pi^{-1}(U)$ such that
$\pr_1 = \pi \circ \psi$. Define the continuous map
$s_\psi \: U \to \pi^{-1}(U) : u \mapsto \psi(u,e)$. Note that
$\pi \circ s_\psi = 1_U$ and $x \sim s_\psi(\pi(x))$, so
$s_\psi \cdot \tau(x,\psi(u,e))$ is a (continuous) local section for
$(X,\pi,X/G)$ at~$\pi(x)$.

\begin{proposition} 
\label{pr:local-sections}
If the action of $G$ on $X$ is continuous and free, then $(X,\pi,X/G)$
is a quasi-principal fiber $G$-bundle if and only if there is a
continuous local section through each point of~$X$.
\end{proposition}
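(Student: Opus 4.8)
The plan is to treat the two implications separately, noting that one of them is essentially recorded in the paragraph preceding the statement. For the forward direction I would read a section off a trivialization: given a coordinate representation $\{(U_i,\psi_i)\}$ obeying \eqref{eq:principal-condition} and a point $x$ with $\pi(x)\in U_i$, I would write $x=\psi_i(\pi(x),g_0)$ for the appropriate $g_0\in G$ and set $s\:U_i\to\pi^{-1}(U_i)$, $s(u):=\psi_i(u,g_0)$. Continuity of $s$ is inherited from $\psi_i$, the relation $\pi\circ\psi_i=\pr_1$ gives $\pi\circ s=1_{U_i}$, and the twist by $g_0$ arranges $s(\pi(x))=x$, so the section passes through the prescribed point. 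Hence all the real content sits in the reverse implication, where continuous local sections must be upgraded to a coordinate representation.

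For the reverse direction, given a continuous section $s\:U\to X$ with $\pi\circ s=1_U$ and $\pi(x)\in U$, I would define
\[
\psi\:U\x G\to\pi^{-1}(U),\qquad \psi(u,g):=s(u)\.g .
\]
First I would dispatch the routine points. From $\pi(s(u)\.g)=\pi(s(u))=u$ one gets $\pi\circ\psi=\pr_1$; associativity of the right action gives $\psi(u,gh)=(s(u)\.g)\.h=\psi(u,g)\.h$, which is \eqref{eq:principal-condition}; and $\psi$ is continuous as the composite of $(u,g)\mapsto(s(u),g)$ with the action. Surjectivity onto $\pi^{-1}(U)$ uses that orbits are fibers, so each $y\in\pi^{-1}(U)$ shares its orbit with $s(\pi(y))$ and thus $y=\psi(\pi(y),g)$ for some $g$; injectivity is freeness, argued as in Lemma~\ref{lm:free-action}. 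At this stage $\psi$ is a continuous equivariant bijection with $\pi\circ\psi=\pr_1$.

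The hard part will be the continuity of $\psi^{-1}$, which is what the word \emph{homeomorphism} in the definition of a fiber bundle demands, and I expect this to be the main obstacle. Writing $\psi^{-1}(y)=(\pi(y),\tau(s(\pi(y)),y))$, the first coordinate is continuous and the second is the composite of the continuous map $y\mapsto(s(\pi(y)),y)\in X\x_G X$ with the translation map; so the whole question collapses to the continuity of $\tau$ over $U$, equivalently, by Lemma~\ref{lm:XxG-homeomorphism}, to the canonical map having a continuous inverse there. This is genuinely delicate: a continuous equivariant bijection need not be open, and openness of $\psi$ does \emph{not} follow from openness of the $G$-action alone, because the slice $s(U)$ is not open in $\pi^{-1}(U)$. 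To close the gap I would prove $\psi$ open directly—equivalently, that $y\mapsto\tau(s(\pi(y)),y)$ is continuous—and this is exactly the step at which the regularity of the action available in this setting, rather than the bare existence of the section, must be invoked; it is the crux on which the proposition turns.
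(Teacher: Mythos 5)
Everything you do up to the last step coincides with the paper's proof: the forward direction by twisting $u \mapsto \psi_i(u,e)$ with the constant $g_0$, and in the reverse direction the map $\psi_s(u,g) = s(u)\.g$, the verification of \eqref{eq:principal-condition}, continuity, injectivity from freeness, and surjectivity from ``orbits are fibers'' are all exactly the paper's steps. The gap is that you stop precisely where the proof still has to be finished. You correctly reduce the continuity of $\psi_s^{-1}$ to the continuity of $y \mapsto \tau(s(\pi(y)),y)$ on $\pi^{-1}(U)$, but then only announce that you ``would prove $\psi$ open directly,'' without supplying an argument. The paper closes this in one line: $\psi_s^{-1}(y) = \bigl(\pi(y),\,\tau(s(\pi(y)),y)\bigr)$, where $y \mapsto (s(\pi(y)),y)$ is a continuous map into $X \x_G X$ with its relative topology and the translation map $\tau \: X \x_G X \to G$ is continuous. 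So the ``regularity of the action'' you sense must be invoked is exactly the continuity of $\tau$ (equivalently, by Lemma~\ref{lm:XxG-homeomorphism}, that the canonical map is a homeomorphism), which is the standing quasi-principality of the $G$-space in this part of the paper; no separate openness argument for $\psi_s$ is needed once that is granted.

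Your suspicion that the bare existence of continuous sections cannot carry this step is well founded, which is why leaving it open is a genuine gap and not a technicality: from ``continuous and free'' alone the reverse implication is false. Take $X = \bR$ with its usual topology and $G = \bR$ equipped with the discrete topology, acting by translation. The action is continuous and free, $X/G$ is a point, and every point of $X$ lies on a continuous (global) section, yet no map $\{\ast\} \x G \to X$ can be a homeomorphism, so $(X,\pi,X/G)$ is not a quasi-principal fiber $G$-bundle. The missing ingredient that rules this out, and that your proof must import, is the continuity of the translation map; once it is in hand, the formula for $\psi_s^{-1}$ above finishes the argument immediately.
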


\begin{proof}
It remains to show the reciprocal statement, so let $s \: U \to X$ be
a continuous local section through $x \in X$ and define the
continuous map $\psi_s \: U \x G \to \pi^{-1}(U)$ by
$\psi_s(u,g) := s(u)\.g$ with
$\pi(s(u)) = \pi(s(u)\.g) = \pi(\psi_s(u,g)) = u$. Evidently, $\psi_s$
satisfies~\eqref{eq:principal-condition}. If
$\psi_s(u,g) = \psi_s(u',g')$, then applying $\pi$ gives $u = u'$, and
because the action is free, $\psi_s$ is an injective map. If
$y \in \pi^{-1}(U)$, then $\pi(y) = \pi \circ s(\pi(y))
= \pi(\psi_s(\pi(y),e))$, meaning that there is a $g \in G$ with
$y = \psi_s(\pi(y),e) \cdot g = \psi_s(\pi(y),g)$. Hence, $\psi_s$ is
surjective as well. The continuity of
$\psi_s^{-1} \: \pi^{-1}(U) \to U \x G$ follows from the relation
$y = \psi_s(\pi(y),e) \cdot \tau(\psi_s(\pi(y),e),y)$, used to show
the surjectivity of~$\psi_s$.
\end{proof}

From this section we draw the following conclusion: the existence of a
continuous translation map does not characterize a topological
principal fiber $G$-bundle, in the more general sense it is missing
that the base space be Hausdorff.


\subsection{The Milnor construction} 
\label{ssc:Milnor}

The objective of this section is to present an example of a bundle
$(E_G,p,B_G)$ that is a quasi-principal or principal $G$-bundle,
depending on the properties of the topological group~$G$. This example
is of particular interest since in \cite{Milnor} (see also
\cite{Husemoller}) it is shown that the said bundle is
\textit{universal}, in the sense that for any (quasi) principal
$G$-bundle $(X,\pi,B)$ there is a continuous map $f \: B \to B_G$
(unique up to homotopy) such that the pull-back bundle
$(f^*(E_G),\pi',B)$ is homeomorphic to $(X,\pi,B)$; see
\cite[Proposition 10.6(1), p.~52 and Theorem~12.2,
p.~55]{Husemoller}. The base space $B_G$ is called the
\textit{classifying space} of the group G. The name is motivated by
the circumstance that the map $f \mapsto (f^*(E_G),\pi',B)$ induces a
bijection between homotopy classes of continuous maps from $B$
to~$B_G$ and homeomorphism classes of (quasi) principal $G$-bundles
with base space $B$: see \cite[Proposition 10.6(2), p.~52 and
Theorem~12.4, p.~56]{Husemoller}.

Let $G$ be a topological group and consider the set $E_G$ of formal
sequences $(t_ng_n)_n = (t_0g_0,t_1g_1,\dots)$ with $g_n \in G$ and
$t_n \in[0,1]$, with only finitely many nonzero~$t_n$ and
$\sum t_n = 1$.%
\footnote{%
The condition $\sum t_n = 1$ is requested in order to have a numerable
$G$-bundle: see~\cite[Definition~9.1, p.~48]{Husemoller}.}
Furthermore, if any $t_m = 0$, it is assumed that $0 g_m = 0 h_m$, for
any $g_m,h_m \in G$. The topology on $E_G$ is the coarsest one making
continuous the functions
$$
t_m \: E_G \to [0,1] : (t_ng_n)_n \mapsto t_m, \word{and}
g_m \: t_m^{-1}((0,1]) \to G : (t_ng_n)_n \mapsto g_m,
$$
with a tiny abuse of notation. An open set in $E_G$ is an arbitrary
union of finite intersections of sets of the form $t_m^{-1}(I)$ and
$g_n^{-1}(O)$ with $I$ open in $[0,1]$ and $O$ open in~$G$. The action
$\al \: E_G \x G \to E_G$ is given by
$(t_ng_n)_n \cdot g = (t_ng_ng)_n$.

It follows that 
$t_m \bigl((t_ng_n)_n\.g\bigr) = t_m \bigl((t_ng_ng)_n\bigr) = t_m$
and $g_m \bigl((t_ng_n)_n\.g\bigr) = g_m \bigl((t_ng_ng)_n\bigr) 
= g_m g = g_m \bigl((t_ng_n)_n\bigr)g$. These relations can be written
as $t_m \circ \al = t_m \circ \pr_1 \: E_G \x G \to [0,1]$ and
$g_m \circ \al = \mu \circ(g_m \x 1_G) \: E_G \x G \to G$, with
$\mu$ the multiplication in~$G$. As a result, $\al$ is a continuous
action and $E_G$ is a $G$-space. It is easily seen that this action
is free.

Denote by $B_G$ the quotient space $E_G/G$ under this action. The
translation function $\tau \: E_G \x_G E_G \to G$ is continuous.
Indeed, if $(s_nh_n)_n = (t_ng_n)_n\.g = (t_ng_ng)_n$ in $E_G$, then
there is some~$m$ with $s_m = t_m > 0$ and
$\tau\bigl( (t_ng_n)_n, (s_nh_n)_n \bigr) = g 
= g_m \bigl( (t_ng_n)_n \bigr)^{-1} g_m \bigl( (s_nh_n)_n \bigr)$,
meaning that $\tau$ restricted to the open set
$(t_m^{-1}(0,1] \x t_m^{-1}(0,1]) \cap(E_G \x_G E_G)$ is continuous.
Since the open sets $t_m^{-1}(0,1] \x t_m^{-1}(0,1]$ cover
$E_G \x_G E_G$, it follows that $\tau$ is a continuous function, and
hence the action is quasi-principal.

Before looking into a Hausdorff structure on $B_G$ and whether
$(E_G,p,B_G)$, with $p$ the canonical projection, is a principal
$G$-bundle, let us ponder a Hausdorff structure on~$E_G$. Two points
$(t_ng_n)_n$ and $(s_nh_n)_n$ are different in $E_G$ if
\textit{either} there is an $m$ with $t_m \neq s_m$; \textit{or}
$t_n = s_n$ for all~$n$ but there is some~$m$ with $t_m > 0$ and
$g_m \neq h_m$ in $G$. If the first case, one can take open and
disjoint intervals $I_1$, $I_2$ in $[0,1]$ with $t_m \in I_1$ and
$h_m \in I_2$. Since $(t_n g_n)_n \in t_m^{-1}(I_1)$,
$(s_nh_n)_n \in t_m^{-1}(I_2)$ and
$t_m^{-1}(I_1) \cap t_m^{-1}(I_2) = \emptyset$, one can then separate
those two points in $E_G$ with open sets. In the second case, in order
to mimic the previous argument, we shall demand that $G$ be Hausdorff.

\begin{proposition} 
\label{pr:Milnor-EG-iff-G}
In the Milnor construction, $G$ is Hausdorff if and only if $E_G$ is
Hausdorff.
\end{proposition}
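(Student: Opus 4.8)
The plan is to prove the two implications separately, exploiting the subbasis for the topology on $E_G$ described above, namely the sets $t_m^{-1}(I)$ and $g_n^{-1}(O)$ with $I$ open in $[0,1]$ and $O$ open in~$G$.

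For the forward implication, assume $G$ is Hausdorff and continue the case analysis begun just before the statement. The first case (two points differing in some coordinate $t_m$) has already been separated using the continuous maps $t_m$; it remains to treat the second case, where $t_n = s_n$ for all $n$ but $g_m \neq h_m$ for some $m$ with $t_m > 0$. Here I would use the Hausdorff property of $G$ to choose disjoint open sets $O_1, O_2 \subseteq G$ with $g_m \in O_1$ and $h_m \in O_2$, and then form $g_m^{-1}(O_1)$ and $g_m^{-1}(O_2)$. These are disjoint (since $O_1 \cap O_2 = \emptyset$) and open in $E_G$: the map $g_m$ is defined and continuous on the open set $t_m^{-1}((0,1]) \subseteq E_G$, so the preimages are open in that subspace and hence in $E_G$. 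As $(t_ng_n)_n \in g_m^{-1}(O_1)$ and $(s_nh_n)_n \in g_m^{-1}(O_2)$, the two points are separated, and $E_G$ is Hausdorff.

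For the converse, assume $E_G$ is Hausdorff. I would introduce the map $\iota \: G \to E_G$ sending $g$ to the formal sequence with $t_0 = 1$, $g_0 = g$, and $t_n = 0$ for $n \geq 1$ (well defined by the convention $0g_m = 0h_m$). This map is clearly injective. To see that it is continuous, it suffices to check the preimages of the subbasic open sets: $\iota^{-1}(t_m^{-1}(I))$ is either $G$ or $\emptyset$ according to whether $1 \in I$ (for $m=0$) or $0 \in I$ (for $m \geq 1$); and $\iota^{-1}(g_n^{-1}(O))$ equals $O$ when $n = 0$ and $\emptyset$ when $n \geq 1$, since $\iota(g)$ lies in the domain $t_n^{-1}((0,1])$ of $g_n$ only for $n = 0$, where $g_0(\iota(g)) = g$. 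Thus $\iota$ is a continuous injection, and Hausdorffness transfers back: for $g \neq h$ one has $\iota(g) \neq \iota(h)$, which can be separated by disjoint open sets $U, V \subseteq E_G$, whose preimages $\iota^{-1}(U)$ and $\iota^{-1}(V)$ are disjoint open neighborhoods of $g$ and $h$ in~$G$.

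The only delicate points, which I expect to be the mild obstacle, are bookkeeping ones: confirming that $g_m^{-1}(O)$ is genuinely open in $E_G$ rather than merely in the subspace $t_m^{-1}((0,1])$ (it is, because that subspace is itself open in $E_G$), and verifying continuity of $\iota$ against the subbasis rather than against an arbitrary open set. Neither step requires more than the explicit description of the topology on $E_G$.
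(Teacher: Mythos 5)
Your proof is correct. The forward implication is essentially identical to the paper's: the same case split on whether the two points differ in some $t_m$-coordinate or only in some $g_m$-coordinate with $t_m>0$, and the same separation by $g_m^{-1}(O_1)$ and $g_m^{-1}(O_2)$; your remark that these sets are open in $E_G$ because the domain $t_m^{-1}((0,1])$ of $g_m$ is itself open is the right bookkeeping point. The converse is where you genuinely diverge. The paper works directly with the two points $\unl{g}=(g,0,0,\dots)$ and $\unl{h}=(h,0,0,\dots)$ and argues that, since all $t_n$-coordinates agree, the only way to separate them is by disjoint sets of the form $g_0^{-1}(O_1)$ and $g_0^{-1}(O_2)$, from which disjoint neighborhoods of $g$ and $h$ in $G$ are extracted. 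As written, that step is slightly delicate: a general open set of $E_G$ is a union of finite intersections of subbasic sets, so one must still argue that the separating neighborhoods can be taken of that special form. Your route via the map $\iota \colon G \to E_G$, checked to be a continuous injection against the subbasis, and the standard fact that Hausdorffness pulls back along continuous injections, sidesteps this entirely and is the more robust (and arguably cleaner) argument; what it costs is only the small extra verification of continuity of $\iota$, which you carry out correctly.
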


\begin{proof}
Assume that $G$ is Hausdorff and take two points 
$(t_ng_n)_n \neq (s_nh_n)_n$ in~$E_G$. We are assuming that the second
condition holds, so $t_n = s_n$ for all~$n$ but there is an~$m$ with
$t_m > 0$ and $g_m \neq h_m$ in $G$. There are disjoint open sets
$O_1$, $O_2$ in $G$ with $g_m \in O_1$, $h_m \in O_2$. Since
$(t_ng_n)_n \in g_m^{-1}(O_1)$, $(s_nh_n)_n \in g_m^{-1}(O_2)$ and
$g_m^{-1}(O_1) \cap g_m^{-1}(O_2) = \emptyset$, it follows that once
again we can separate the two points $(t_ng_n)_n$ and $(s_nh_n)_n$ in
$E_G$ with open sets. So far we have shown that if $G$ is Hausdorff,
then $E_G$ is Hausdorff, too.

Now assume that $E_G$ is Hausdorff and suppose that $g \neq h$ in $G$.
Consider the two different elements $\unl{g} = (g,0,0,\dots)$,
$\unl{h} = (h,0,0,\dots)$ in~$E_G$. Since
$t_n(\unl{g}) = t_n(\unl{h})$ for all~$n$, the elements $\unl{g}$,
$\unl{h}$ cannot be separated in $E_G$ by open sets of the form
$t_n^{-1}(I)$ with $I$ open in $[0,1]$. So the only possibility to
separate $\unl{g}$ and $\unl{h}$ in $E_G$ would be with disjoint open
sets of the form $g_0^{-1}(O_1) \ni \unl{g}$ and
$g_0^{-1}(O_2) \ni \unl{h}$. Were there
$k \in O_1 \cap O_2 \subseteq G$, then
$g_0^{-1}(k) \subseteq g_0^{-1}(O_1) \cap g_0^{1}(O_2) = \emptyset$,
which is absurd. In this way, there exist disjoint open sets
$O_1 \ni g$ and $O_2 \ni h$ in $G$ that separate $g$ and~$h$; thus,
$G$ is Hausdorff.
\end{proof}

Let us now deal with the Hausdorff topology on $B_G$; first, we state
some facts and notation. Let $x = (t_ng_n)_n$ be an element of $E_G$.
Since only finitely many $t_n$ are nonzero, there is a greatest index
$m_x$ and a least index $M_x$ such that $t_j = 0$ for all $j < m_x$ or
$j > M_x$. This allows us to work with the \textit{truncation}
$[x] = (t_{m_x} g_{m_x},\dots, t_{M_x} g_{M_x})$, where we allow that
possibly some of the $t_i$ might be zero for $m_x < i < M_x$. Also
note that if $x \sim y$ because $t_n(x) = t_n(y)$ for every $n$, we
conclude that $m_x$ and $M_x$ are the same for any $y$ in the same
orbit $\sO$ as~$x$. Furthermore,
$[y] = (t_{m_\sO} g_{m_\sO},\dots,t_{M_\sO} g_{M_\sO}) \. \tau(x,y)$,
for each $y \in \sO$, i.e, $g_i(y) = g_i(x) \cdot \tau(x,y)$. For any
orbit $\sO$ we may choose a representative $x_e$ of the form
$[x_e] = (t_m e, t_{m+1} g_{m+1}, \dots, t_M g_M)$, that is,
$g_m(x_e) = e$, simply by replacing any $x \in \sO$ by
$x\.g_m(x)^{-1}$.

Let $U$ be an open set in $B_G$ and consider the map
$\psi \: U\x G \to p^{-1}(U)$ given by 
$(p(x),g) \mapsto x_e\.g = x\.g_m(x)^{-1} g = x\.\tau(x,x_e)g$.
Trivially, $\psi(p(x),gh) = \psi(p(x),g)h$ for any $g,h \in G$. If
$x \in p^{-1}(U)$, then $x = x_e\.g_m(x) = \psi(p(x_e),g_m(x))
= \psi(p(x),g_m(x))$, making $\psi$ a surjective map such that
$p \circ \psi = \pr_1$. Also, if 
$x_e\.g = \psi(p(x),g) = \psi(p(y),h) = y_e\.h$, for $x$ and $y$ in
the same orbit, then $x_e = y_e$ and $g = h$, showing that $\psi$ is
an injective map. Since $\psi^{-1} \: p^{-1}(U) \to U \x G$ is given
by $x \mapsto (p(x),g_m(x))$, it is a continuous map.

To verify that $\psi$ is continuous, let $V = V' \cap p^{-1}(U)$ be an
open set in $p^{-1}(U)$, with $V'$ an open set in $E_G$. To verify
that $\psi^{-1}(V)$ is open in $U \x G$, it is enough to do it for
sets $V'$ of the form $t_i^{-1}(I)$ and $g_j^{-1}(O)$ with $I$ open in
$[0,1]$ and $O$ open in~$G$. If $V' = t_i^{-1}(I)$, then
$(p(x_e),g) \in \psi^{-1}(t_i^{-1}(I) \cap p^{-1}(U))$ if and only if
$x_e\.g_m(x_e)^{-1} g \in t_i^{-1}(I) \cap p^{-1}(U)$, which in turn
is equivalent to $x_e \in t_i^{-1}(I)\cap p^{-1}(U)$. So,
$\psi^{-1}(t_i^{-1}(I) \cap p^{-1}(U)) = (p(t_i^{-1}(I))\cap U) \x G$,
open in $U \x G$. If $V' = g_j^{-1}(O)$, then
$(u,g) = (p(x_e),g) \in \psi^{-1}(g_j^{-1}(O)\cap p^{-1}(U))$ for some
$x \in E_G$, if and only if $x_e\.g \in g_j^{-1}(O) \cap p^{-1}(U)$.
Since this is equivalent to $g_j(x_e) \in Og^{-1}$ open in $G$, the
representative $x_e$ for~$u$ must belong to the set
$g_j^{-1}(Og^{-1})$, open in $E_G$, and $u \in p(g_j^{-1}(Og^{-1}))$,
open in~$U$. We conclude that $\psi$ is a continuous map and that
$(E_G,p,B_G)$ has a local product structure.

By Proposition~\ref{pr:local-sections}, there is a local continuous
section through each point $x \in E_G$. Indeed, let
$x = x_e\.g_m(x)$ be fixed in~$E_G$, and consider
$s \: U \to p^{-1}(U) : p(y)\mapsto \psi(p(y),e)$, with
$s(p(x)) = x_e$, meaning that $s\.g_m(x)$ is a local section with 
$(s\.g_m(x))(p(x)) = x_e\.g_m(x) =  x$.

\begin{proposition} 
\label{pr:Milnor-BG-iff-G}
In the Milnor construction, if $G$ is Hausdorff, then $B_G$ is
Hausdorff and $(E_G,p,B_G)$ is a principal $G$-bundle.
\end{proposition}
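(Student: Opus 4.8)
The plan is to reduce the entire statement to the single assertion that $B_G$ is Hausdorff. Indeed, the discussion preceding the proposition has already exhibited $(E_G,p,B_G)$ as a quasi-principal fiber $G$-bundle: the action is continuous and free, and the maps $\psi\: U\x G \to p^{-1}(U)$ provide a local product structure satisfying~\eqref{eq:principal-condition}. Granting that $B_G$ is Hausdorff, Proposition~\ref{pr:quasi-to-principal} upgrades the action to a principal one, so that $(E_G,p,B_G)$ is a principal $G$-bundle. Thus everything hinges on the Hausdorff property of the quotient, which I would attack through Lemma~\ref{lm:DuKo}: it suffices to prove that $E_G\x_G E_G$ is closed in $E_G\x E_G$.

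To this end I would take a pair $(x,y)$ with $\sO_x\neq\sO_y$ and separate it from $E_G\x_G E_G$ by an open box $V\x W$, distinguishing two cases according to the coordinate functions $t_n$. If $t_m(x)\neq t_m(y)$ for some $m$, then, since $t_m\:E_G\to[0,1]$ is continuous and constant along orbits, I would pick disjoint intervals $I_1\ni t_m(x)$ and $I_2\ni t_m(y)$ and set $V=t_m^{-1}(I_1)$, $W=t_m^{-1}(I_2)$; orbit-equivalent points share every $t_m$-value, so $(V\x W)\cap(E_G\x_G E_G)=\emptyset$.

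The delicate case, and the one place where the hypothesis that $G$ be Hausdorff enters, is when $t_n(x)=t_n(y)$ for all $n$ while the orbits still differ. Here I would introduce, for the least index $m$ with $t_m(x)>0$ and any other index $j$, the map $\phi_j\:z\mapsto g_j(z)\,g_m(z)^{-1}$, defined and continuous on the open set $t_m^{-1}((0,1])\cap t_j^{-1}((0,1])$. The key computation is that $\phi_j$ is \emph{orbit-invariant}: from $g_n(z\.g)=g_n(z)\,g$ one gets $\phi_j(z\.g)=\bigl(g_j(z)\,g\bigr)\bigl(g_m(z)\,g\bigr)^{-1}=g_j(z)\,g_m(z)^{-1}=\phi_j(z)$, the right translation cancelling precisely because the reference slot $m$ is divided out on the right. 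A short check shows that, for two points with identical $t$'s, lying in the same orbit is equivalent to all the $\phi_n$-values agreeing; hence $\sO_x\neq\sO_y$ forces $\phi_j(x)\neq\phi_j(y)$ for some $j$. Using that $G$ is Hausdorff I would then choose disjoint open sets $O_1\ni\phi_j(x)$, $O_2\ni\phi_j(y)$ and set $V=\phi_j^{-1}(O_1)$, $W=\phi_j^{-1}(O_2)$; invariance of $\phi_j$ together with disjointness of $O_1,O_2$ again prevents any point of $V$ from being orbit-equivalent to a point of $W$.

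In both cases the complement of $E_G\x_G E_G$ is open, so the set is closed, Lemma~\ref{lm:DuKo} yields a Hausdorff $B_G$, and Proposition~\ref{pr:quasi-to-principal} finishes the proof. I expect the main obstacle to be exactly the construction of the orbit-invariant separating function in the second case: the bare coordinate maps $g_j$ used to prove that $E_G$ is Hausdorff in Proposition~\ref{pr:Milnor-EG-iff-G} are \emph{not} constant on orbits, and the trick is to form the quotient $g_j\,g_m^{-1}$, which descends to the base precisely because the group acts on the right. An essentially equivalent alternative would be to separate $\sO_x$ and $\sO_y$ directly in $B_G$, pushing the saturated open sets $V,W$ forward along the open map $\pi$.
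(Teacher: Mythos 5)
Your proposal is correct, and its skeleton (reduce to Hausdorffness of $B_G$ via the quasi-principal structure, then to closedness of $E_G\x_G E_G$ via Lemma~\ref{lm:DuKo}, then split into the two cases according to the $t_n$) matches the paper exactly; the first case is handled identically. Where you genuinely diverge is in the second case, which is the heart of the matter. The paper does not build orbit-invariant functions: it invokes the Hausdorffness of $E_G$ (Proposition~\ref{pr:Milnor-EG-iff-G}) to separate $x$ and $y$ upstairs, pushes the neighborhoods down to $B_G$, takes local sections $s_x,s_y$ and the transition function $h(z)=\tau(s_x(z),s_y(z))$, and then argues by contradiction with a net $\{z_i\}$ converging to both $p(x)$ and $p(y)$, concluding that $s_x(z_i)\to x$ and $s_y(z_i)=s_x(z_i)\.h(z_i)\to y$ would force $x\sim y$. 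Your construction of the continuous, orbit-invariant maps $\phi_j=g_j\,g_m^{-1}$ on $t_m^{-1}((0,1])\cap t_j^{-1}((0,1])$, together with the observation that two points with the same $t$-coordinates lie in the same orbit precisely when all the $\phi_j$ agree (take $g=g_m(x)^{-1}g_m(y)$ for the converse), lets you separate distinct orbits directly by pulling back disjoint open sets of~$G$, using the Hausdorff hypothesis on $G$ itself rather than on $E_G$. This buys a shorter, entirely pointwise argument that avoids local sections and the limiting/net step altogether; the paper's route, by contrast, leans on machinery (local triviality and sections) already established for the Milnor bundle. Both are valid, but your invariant-function device is the more economical and self-contained of the two.
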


\begin{proof}
Following Lemma~\ref{lm:proper-action-characterization}, if $B_G$ is
Hausdorff then $(E_G,p,B_G)$ is a principal $G$-bundle. By
Lemma~\ref{lm:DuKo}, $B_G$ is Hausdorff if and only if $E_G \x_G E_G$
is closed in $E_G \x E_G$, so let
$(x,y) = \bigr( (t_ng_n)_n, (s_nh_n)_n \bigl)$ be an element of
$E_G \x E_G$ not in $E_G \x_G E_G$.

Since only finitely many $t_n$ and $s_n$ are nonzero, there is an
index~$m$ such that $t_r = s_r = 0$ for all $r > m$; this allows us to
work with their \textit{$m$-th truncations}
$x_{[0,m]} = (t_0g_0,\dots,t_mg_m)$ and
$y_{[0,m]} = (s_0h_0,\dots, s_mh_m)$. Observe that
$t_n(x\.g) = t_n(x)$ for any $g \in G$.

There are thus two possibilities for $y_{[0,m]} \neq x_{[0,m]}\.g$ for
all $g \in G$. First, there is an index $n \in \{0,\dots,m\}$ such
that $t_n \neq s_n$; or second, $t_n = s_n$ holds for all
$n \in \{0,\dots,m\}$ and for each $g \in G$, there is some
$n_g \leq m$ with $t_{n_g} = s_{n_g} > 0$ and $h_{n_g} \neq g_{n_g}g$.

If the first possibility holds, there are disjoint open sets 
$I_1 \ni t_n$ and $I_2 \ni s_n$ in $[0,1]$. If $(z,w)$ belongs to the
open set $t_n^{-1}(I_1) \x t_n^{-1}(I_2) \subseteq E_G \x E_G$ and
there is $h \in G$ with $w = z\.h$, then
$t_n(w) = t_n(z) \in I_1 \cap I_2$, which is impossible. Hence
$(t_n^{-1}(I_1) \x t_n^{-1}(I_2)) \cap (E_G \x_G E_G) = \emptyset$.

Under the second possibility, $x_{[0,m]} = (t_0g_0,\dots,t_mg_m)$ and
$y_{[0,m]} = (t_0h_0,\dots,t_mh_m)$, where for each $g \in G$ there is
an index $n_g \leq m$ with $t_{n_g} > 0$ and $h_{n_g} \neq g_{n_g}g$.
We may assume, without loss of generality, that $t_j(x) = t_j(y) > 0$
for all $j = 0,\dots,m$ by reordering them if need be and taking a
smaller~$m$.

Since $E_G$ is Hausdorff, there are disjoint open sets $V_x \ni x$ and
$V_y \ni y$. By shrinking the respective open sets $U_x = p(V_x)$ and
$U_y = p(V_y)$ in $B_G$ if necessary, we know there are local product
homeomorphisms $\psi_x \: U_x \x G \to p^{-1}(U_x)$ and
$\psi_y \: U_y \x G \to p^{-1}(U_y)$. If $U_x$ and $U_y$ can be shrunk
even further so as to have $U_x \cap U_y = \emptyset$, then
$(x,y) \in p^{-1}(U_x) \x p^{-1}(U_y)$ and 
$(p^{-1}(U_x) \x p^{-1}(U_y)) \cap (E_G \x_G E_G) = \emptyset$.

So we are left with the case where however much we shrink
$U_x \ni p(x)$ and $U_y \ni p(y)$, $U_x \cap U_y \neq \emptyset$
persists. We shall prove that such an assumption leads to a
contradiction. If such a pair of open sets exists, for each of them
there are continuous sections $s_x \: U_x \to p^{-1}(U_x)$ and
$s_y \: U_y \to p^{-1}(U_y)$ satisfying $s_x(p(x)) = x$ and
$s_y(p(y)) = y$.  For any point $z \in U_x \cap U_y$, 
$s_x(z) \sim s_y(z)$ in $p^{-1}(U_x \cap U_y) \subseteq E_G$.
Neither $p(x)$ nor $p(y)$ belongs to $U_x \cap U_y$, by assumption.
Define a continuous function $h \: U_x \cap U_y \to G$ by
$h(z) := \tau(s_x(z), s_y(z))$. On $U_x \cap U_y$ both sections
$s_x|_U \cdot h \: U \to p^{-1}(U)$ and $s_y|_U \: U \to p^{-1}(U)$
coincide. By taking successively smaller sets, we can build two nets
of open sets $\{O_i\}_{i\in I}$ and $\{O'_i\}_{i\in I}$ directed by 
inverse inclusion and satisfying $p(x) \in O_i \subseteq U_x$,
$p(y) \in O'_i \subseteq U_y$, with $O_i \cap O'_i \neq \emptyset$. As
a result, there is a net $\{z_i\}_{i\in I} \subseteq B_G$ with each
$z_i \in O_i \cap O'_i$, such that $\{z_i\}_{i\in I}$ converges in
$B_G$ to both $p(x)$ and~$p(y)$. By continuity,
$\{s_x(z_i)\}_{i\in I}$ converges to $x$ in $E_G$ and
$\{s_y(z_i)) = s_x(z_i) \cdot h(z_i))\}_{i\in I}$ converges to~$y$,
contradicting that $x$ and $y$ belong to different orbits. We conclude
that $B_G$ is Hausdorff.
\end{proof}

We wish to move from the topological to the smooth situation. As an
intermediate step, we shall study the case of a discrete group acting
on a smooth manifold, leaving the case of a general Lie group for
Section~\ref{sec:Husemoller-principal-bundles}.


\section{A discrete group acting on a smooth manifold} 
\label{sec:discrete-case}

In this section we deal with the case of a discrete group acting on a
smooth manifold.%
\footnote{Our manifolds are Hausdorff and second countable, to ensure
for example, the existence of partitions of unity. In
\cite[Problem~1-5, p.~30]{Lee} it is requested to show that for a
locally Euclidean Hausdorff space, second countable is equivalent to
paracompact and having countably many components.}
From the previous section, we know that a continuous translation map
with Hausdorff quotient space characterizes a principal fiber
$G$-bundle in the topological case. When we replace the topological
space $X$ by an smooth manifold, we wish to obtain more: a smooth
structure on the quotient space and some sort of smoothness for the
translation map. There are a few types of actions available in the
literature for a discrete group acting on a space.

Since we now aim to characterize principal fiber $G$-bundles whose
total space is a smooth manifold, we ought to demand that our action
$X \x G \to X$ be smooth. Thus, for each $g \in G$, the map
$X \to X : x \mapsto x\.g$ will be a smooth map, and actually a
diffeomorphism with inverse $x \mapsto x\.g^{-1}$. However, for now we
still use continuous actions and will not require a smooth action
until Theorem~\ref{thm:covering-principal-is-principal-discrete-case}.

We work with a free action to ensure that the translation map
$\tau \: X \x_G X \to G$ is well defined. The action of a discrete
group $G$ on smooth manifold $X$ is called a
\textit{covering-space action}%
\footnote{As indicated by J. M. Lee in 
\url{https://math.stackexchange.com/questions/1082834},
this named was suggested by A. Hatcher in \cite{Hatcher}, in
opposition to the term ``properly discontinuous'' used for example
in~\cite{DoCarmo}.}
if every $x \in X$ has a neighborhood $U \subset X$ such that 
$U \cap U\.g = \emptyset$ for all $g \neq e$ in~$G$. Note that
covering-space actions are free.

\begin{example} 
\label{ex:covering-not-proper}
Consider the action of $G = \bZ$ on $X = \bR^2 \less \{(0,0)\}$,
given by $\al: X \x G \to X \x X : (x,y)\.n = (2^n x, 2^{-n} y)$. By
staying away from the nonzero numbers $\{x/2, y/2, 2x, 2y\}$, we can
construct an open box $U$ centered at $(x,y)$ such that
$U \cap U\.n = \emptyset$ for every $n \in \bZ \less \{0\}$, making it
a covering-space action.

Now consider the compact set $K = \set{(x,y) : \max(|x|,|y|) = 1}$.
The subset $K \x K \subseteq X \x X$ is compact. The sequence
$((2^{-n},1),n)_{n\in\bN} \subseteq \al^{-1}(K \x K)$ is not compact
in $X \x G$ since it has no convergent subsequence. By
Lemma~\ref{lm:proper-map}, the action $\al$ is not proper.

A more elaborate example is the Kronecker foliation of the $2$-torus
$X = \bT^2 \cong \bR^2/\bZ^2$, $G = \bR$,
$X \x G \to X : ((x,y),t) \mapsto (x + t, y + \theta t)$ with $\theta$
irrational. In \cite{Ellwood} this action is shown to be free but not
proper and the quotient space is non-Hausdorff, having as the unique
open sets $\emptyset$ and $X/G$ (see
Lemma~\ref{lm:proper-action-characterization}).
\end{example}

In \cite[p.~143]{NovikovTaimanov} -- see also \cite[Lemma~21.11]{Lee}
-- an action is called \textit{discrete} if as well as being a
covering-space action it satisfies the following: for any two points
$x,y \in X$ with different orbits there are neighborhoods $U$ of~$x$
and $V$ of~$y$ such that the orbits of those neighborhoods are
disjoint.

We require more manageable ways to deal with proper actions. We now
offer equivalent definitions taken from different references. Even
though we work with discrete groups in this section, in
Section~\ref{sec:Husemoller-principal-bundles} we shall move to a less
simple structure on~$G$. For example, it will follows that every
continuous action of a compact Lie group on a smooth manifold is
proper.

We recall (from the proof of
Lemma~\ref{lm:proper-action-characterization}) that the action is
proper if and only if the extension of its canonical map,
$\wt{\ga} \:  G \x X \to X \x X : (g,x) \mapsto (g\.x,x)$ is a proper
map.

\begin{lemma} 
\label{lm:equivalent-proper-actions}
Let $G$ be a Lie group acting continuously%
\footnote{We do not require the action to be smooth. Also what we ask
of $G$ and~$X$ is that they are Hausdorff and second countable. Using
partitions of unity, $X$ can be endowed with a Riemannian metric, so
that this metric topology is equivalent to the original topology on
the manifold.}
on a smooth manifold $X$. The following statements are equivalent.
\begin{enumerate}[noitemsep,label={\textup{(\roman*)}}]
\item 
The action is proper.
\item 
For every compact set $L \subseteq X \x X$, $\wt{\ga}^{-1}(L)$ is
compact in $G \x X$.
\item 
For every compact set $K \subseteq X$, the set
$G_K = \set{g \in G : (K\.g) \cap K \neq\emptyset}$ is compact.
\item 
If $(x_n)$ is a convergent sequence in $X$ and $(g_n)$ is a sequence
in $G$ such that $(x_n\.g_n)$ is also convergent in $X$, then there is
a convergent subsequence of~$(g_n)$.
\end{enumerate}
\end{lemma}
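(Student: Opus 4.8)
The plan is to prove the equivalence of the four statements by establishing a cycle of implications, $(i) \Rightarrow (ii) \Rightarrow (iii) \Rightarrow (iv) \Rightarrow (i)$, leaning on Lemma~\ref{lm:proper-map} (proper maps between a Hausdorff domain and a locally compact range pull compact sets back to compact sets) for the first equivalence. Since $X$ is a manifold (hence Hausdorff and locally compact) and $G$ is a Lie group (also locally compact Hausdorff), the product $G \x X$ is Hausdorff and $X \x X$ is locally compact, so the hypotheses of Lemma~\ref{lm:proper-map} apply to $\wt{\ga}$. This makes $(i) \Leftrightarrow (ii)$ essentially a direct citation of that lemma applied to the map $\wt{\ga}$.

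\medskip

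For $(ii) \Rightarrow (iii)$, I would take a compact $K \subseteq X$ and observe that $L := K \x K$ is compact in $X \x X$, so by $(ii)$ the preimage $\wt{\ga}^{-1}(K \x K) = \set{(g,x) : g\.x \in K,\ x \in K}$ is compact in $G \x X$. The key is to identify $G_K$ with the image of this compact set under the (continuous) first projection $\pr_1 \: G \x X \to G$: indeed $g \in G_K$ exactly when there exists $x \in K$ with $g\.x \in K$, which says $(g,x) \in \wt{\ga}^{-1}(K \x K)$ for some $x$. Hence $G_K = \pr_1(\wt{\ga}^{-1}(K \x K))$ is the continuous image of a compact set, therefore compact. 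For $(iii) \Rightarrow (iv)$, given $x_n \to x$ and $x_n\.g_n \to y$, I would pass to the compact set $K$ consisting of the tails $\set{x, x_n : n} \cup \set{y, x_n\.g_n : n}$ (compact since convergent sequences together with their limits are compact in the metric topology). Each $g_n$ then satisfies $(K\.g_n) \cap K \neq \emptyset$ because $x_n \in K$ and $x_n\.g_n \in K$, so $g_n \in G_K$, which is compact by $(iii)$; compactness of $G_K$ in the metrizable Lie group $G$ yields a convergent subsequence of $(g_n)$.

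\medskip

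The implication I expect to be the main obstacle is $(iv) \Rightarrow (i)$, that is, recovering properness from the sequential condition. The cleanest route is to prove $(iv) \Rightarrow (ii)$ and then invoke $(ii) \Leftrightarrow (i)$ already in hand. So I would show that any sequence $((g_n,x_n))$ in $\wt{\ga}^{-1}(L)$, with $L \subseteq X \x X$ compact, admits a convergent subsequence in $G \x X$. Since $L$ is compact in the metrizable space $X \x X$, the image points $(g_n\.x_n, x_n) \in L$ have a convergent subsequence, so after passing to it both $x_n \to x$ and $g_n\.x_n \to y$ converge in $X$. Now $(iv)$ furnishes a convergent subsequence $g_{n_k} \to g$ of $(g_n)$, and continuity of the action forces the limits to be consistent, so $(g_{n_k}, x_{n_k}) \to (g,x)$ with $\wt{\ga}(g,x) = (g\.x, x) = (y,x) \in L$ by closedness of $L$; thus $(g,x) \in \wt{\ga}^{-1}(L)$ and the preimage is (sequentially, hence genuinely, as everything is metrizable) compact. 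The footnote in the excerpt about working with sequences rather than nets legitimizes the sequential argument throughout, which is the feature that makes this chain clean; the delicate point to get right is the bookkeeping that the subsequence extracted from $(iv)$ is compatible with the already-extracted subsequence making $x_n$ and $x_n\.g_n$ converge, and that the action's continuity pins down $g\.x = y$.
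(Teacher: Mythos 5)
Your proof is correct, but you traverse the cycle among (ii), (iii), (iv) in the opposite direction from the paper, so the individual arguments differ in substance. The paper proves $(ii)\Rightarrow(iv)$ (embed the tail of $(x_n,x_n\.g_n)$ in a compact product of closures of neighborhoods and pull back), then $(iv)\Rightarrow(iii)$ (extract convergent subsequences from $K$ and apply the sequential condition), then $(iii)\Rightarrow(ii)$ (set $L'=\pr_1(L)\cup\pr_2(L)$ and observe $\wt{\ga}^{-1}(L)$ is a closed subset of the compact set $L'\x G_{L'}$); you instead prove $(ii)\Rightarrow(iii)$, $(iii)\Rightarrow(iv)$, $(iv)\Rightarrow(ii)$. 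Your $(ii)\Rightarrow(iii)$ is the cleanest step of either version: the identification $G_K=\pr_1\bigl(\wt{\ga}^{-1}(K\x K)\bigr)$ makes compactness of $G_K$ an immediate consequence of continuity of the projection, with no sequence bookkeeping. The price is paid in $(iv)\Rightarrow(ii)$, where you must establish sequential compactness of $\wt{\ga}^{-1}(L)$ directly; you handle the two delicate points correctly (the subsequence from (iv) is extracted from the already-refined subsequence, and continuity of the action plus closedness of $L$ pin the limit inside the preimage), and the metrizability invoked in the paper's footnote legitimizes identifying sequential compactness with compactness. Both routes rely on Lemma~\ref{lm:proper-map} only for $(i)\Leftrightarrow(ii)$, so the difference is purely one of decomposition; yours localizes the topological work in a single implication, while the paper's spreads the sequence arguments over two.
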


\begin{proof} 
By Lemma~\ref{lm:proper-map}, (i) and (ii) are equivalent.

Assume $\wt{\ga}$ is a proper map and let $x = \lim x_n$,
$x' = \lim x_n\.g_n$ be limit points in $X$ as in~(iv). Let $U$ and
$U'$ be respective neighborhoods of $x$ and $x'$, both with compact
closure. For $n$ large enough, every point $(x_n, x_n\.g_n)$ belongs
to the compact set $\ovl{U} \x \ovl{U'}\subset X \x X$. By~(ii), its
inverse image under $\wt{\ga}$ is compact in $X \x G$, so there exists
a convergent subsequence, $(x_{n_k}, g_{n_k})$ in $X \x G$. In
particular, $(g_{n_k})$ is convergent in~$G$. This shows that (ii)
implies~(iv).

Let $K \subseteq X$ be a compact set and consider a sequence
$(g_n) \subseteq G_K$. Since for each~$n$,
$(K\.g_n) \cap K \neq \emptyset$, there are sequences $(x_n)$ and
$(x'_n)$ in $K$ with $x_n\.g_n = x'_n$. By passing to appropriate
subsequences, we can assume both $(x_n)$ and $(x'_n)$ converge in~$K$.
By~(iv) there is a convergent subsequence for $(g_n)$, meaning that
$G_K$ is compact. Thus, (iv) implies~(iii).

Let $L \subseteq X \x X$ be a compact set; both projections
$\pr_1(L)$ and $\pr_2(L)$ are compact in~$X$, as is
$L' = \pr_1(L) \cup \pr_2(L)$. By~(iii), $G_{L'}$ is compact in $G$.
Now, $\wt{\ga}^{-1}(L)$ is a closed set contained in
$\wt{\ga}^{-1}(L' \x L') 
= \set{(x,g) \in X \x G : (x,x\.g) \in L' \x L'}$. If both $x$ and
$x\.g$ belong to $L'$, then $x\.g \in L'\.g \cap L' \neq \emptyset$,
and so $g \in G_{L'}$. Thus $\wt{\ga}^{-1}(L) \subseteq L' \x G_{L'}$
is also compact. Hence, (iii) implies~(ii).
\end{proof}

A nice application of this lemma is the following result. For the
simplest of the (smooth) principal bundles, it states that the action
is proper and free, and could be seen as a motivation for the work in
Section~\ref{sec:Husemoller-principal-bundles}.

\begin{proposition} 
\label{pr:action-of-G-on-H-is-proper-free}
Let $G$ be a Lie group and $H \subset G$ be a closed subgroup. The
action $G \x H \to G$ given by multiplication is proper and free.
\end{proposition}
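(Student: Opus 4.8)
The plan is to handle freeness directly and to obtain properness from Lemma~\ref{lm:equivalent-proper-actions}, exploiting that here the acting space coincides with the group~$G$, so the action is nothing but the group multiplication $g\.h = gh$. Freeness is immediate: if $g\.h = g$ for some $g \in G$, then $gh = g$ and hence $h = e$.

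For properness I would verify condition~(iv) of Lemma~\ref{lm:equivalent-proper-actions}. Suppose $(x_n)$ converges in~$G$, say $x_n \to x$, and $(h_n)$ is a sequence in~$H$ for which $(x_n\.h_n) = (x_n h_n)$ also converges, say $x_n h_n \to y$. Since multiplication and inversion in~$G$ are continuous, $h_n = x_n^{-1}(x_n h_n) \to x^{-1}y$. Each $h_n$ lies in~$H$ and $H$ is closed in~$G$, so the limit $x^{-1}y$ lies in~$H$; thus $(h_n)$ converges in~$H$ and, in particular, has a convergent subsequence. This establishes~(iv), and by the Lemma the action is proper.

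A slightly different route is condition~(iii): for a compact $K \subseteq G$ one checks that $H_K = \set{h \in H : (K\.h) \cap K \neq \emptyset}$ equals $H \cap K^{-1}K$, since $(K\.h)\cap K \neq \emptyset$ means $kh = k'$ for some $k,k' \in K$, i.e.\ $h = k^{-1}k' \in K^{-1}K$. The set $K^{-1}K$ is compact, being the image of the compact $K \x K$ under the continuous map $(a,b) \mapsto a^{-1}b$; intersecting with the closed subgroup~$H$ yields a compact set, so $H_K$ is compact.

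The crux in either approach is the closedness of~$H$: it is precisely what forces the limit of the $h_n$ to remain in~$H$ (equivalently, what makes $H \cap K^{-1}K$ a closed subset of a compact set, hence compact). Without this hypothesis properness fails in general, as a non-closed subgroup can accumulate at points outside itself; everything else reduces to the continuity of the group operations in the Lie group~$G$.
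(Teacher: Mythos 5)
Your proof is correct, and it differs from the paper's in a way worth noting. The paper also gets freeness immediately and then invokes Lemma~\ref{lm:equivalent-proper-actions}, but it verifies condition~(iii) by a sequential compactness argument: given $(h_n) \subseteq H_K$, it extracts convergent subsequences $l_n \to l$, $k_n \to k$ from the witnessing sequences in $K$ and concludes $h_n \to l^{-1}k$ along a subsequence. Your primary route verifies condition~(iv) instead, and exploits the special feature of this action (the space acted on \emph{is} the group) to solve for $h_n = x_n^{-1}(x_n h_n)$ explicitly; no passage to subsequences is needed, since the whole sequence converges. Your alternative route through~(iii) is also cleaner than the paper's: the identity $H_K = H \cap K^{-1}K$ gives compactness at once, rather than via sequences. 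A further merit of your write-up is that you make explicit where the closedness of $H$ enters --- it is what keeps the limit $x^{-1}y$ (respectively, the set $H \cap K^{-1}K$) inside $H$ --- whereas the paper's proof leaves this step implicit when it concludes that $H_K$ is compact. Both arguments rest on the same ingredients (continuity of multiplication and inversion, closedness of $H$), so the difference is one of economy and emphasis rather than substance.
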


\begin{proof}
It follows straight from the definition that the action is free
($gh = g$ implies $h = e$).

For the proper part we use Lemma~\ref{lm:equivalent-proper-actions}.
Let $K \subseteq G$ be a compact subset and consider
$H_K = \set{h \in H : (K\.h) \cap K \neq \emptyset}$; we must show
that $H_K$ is compact. Let $(h_n)$ be a sequence in $H_K$. There exist
two sequences $(l_n)$ and $(k_n)$ in $K$ with $l_n h_n = k_n$. Since
$K$ is compact, by passing to appropriate subsequences we can assume
there are $l,k \in K$ with $l_n \to l$, $k_n \to k$ and
$h_n = k_n l_n^{-1} \to kl^{-1}$. It follows that $H_K$ is also
compact.
\end{proof}

Let us clarify the relation between covering-space actions, discrete
actions, and proper actions. The following result is an adaptation of
\cite[Lemma~21.11]{Lee}.

\begin{lemma} 
\label{lm:discrete-vs-proper}
A continuous action of a discrete group $G$ on a smooth manifold $X$
is free and proper if and only if it is discrete.
\end{lemma}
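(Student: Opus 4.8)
The plan is to prove both implications by routing ``proper'' through the sequential criteria of Lemma~\ref{lm:equivalent-proper-actions}, recalling that in a discrete group a subset is compact precisely when it is finite, and using that covering-space actions are free by definition.

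For the implication \emph{free and proper $\Rightarrow$ discrete}, I must produce both a covering neighborhood at each point and the orbit-separation clause. For the covering neighborhood at $x$, first I would choose a precompact open $W \ni x$; by part~(iii) of Lemma~\ref{lm:equivalent-proper-actions} the set $G_{\ovl W} = \set{g : (\ovl W\.g)\cap \ovl W \neq\emptyset}$ is finite, with nontrivial elements $g_1,\dots,g_k$, say. Since the action is free and $X$ is Hausdorff, $x \neq x\.g_i$, so for each $i$ one separates $x$ and $x\.g_i$ by disjoint opens $A_i, B_i$ and sets $U_i := A_i \cap (B_i\.g_i^{-1})$, which contains $x$ and satisfies $U_i \cap U_i\.g_i = \emptyset$. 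Then $U := W \cap \bigcap_i U_i$ is the required neighborhood: for $g \notin G_{\ovl W}$ one has $(U\.g) \cap U \subseteq (\ovl W\.g) \cap \ovl W = \emptyset$, while for $g = g_i$ one has $U \cap U\.g_i \subseteq U_i \cap U_i\.g_i = \emptyset$. For orbit-separation, given $x \nsim y$ I would invoke Lemma~\ref{lm:proper-action-characterization}: a free and proper action has $X \x_G X$ closed in $X \x X$, so the point $(x,y) \notin X \x_G X$ admits a basic product neighborhood $U \x V$ disjoint from $X \x_G X$. This says precisely that no point of $U$ shares an orbit with any point of $V$, i.e.\ $(U\.G) \cap (V\.G) = \emptyset$, which is the required disjointness of the orbits of $U$ and~$V$.

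For the converse, \emph{discrete $\Rightarrow$ free and proper}, freeness is immediate since a discrete action is in particular a covering-space action. For properness I would verify criterion~(iv): let $x_n \to x$ and let $g_n \in G$ be such that $x_n\.g_n \to y$; I must extract a convergent—hence, in a discrete group, eventually constant—subsequence of $(g_n)$. First I rule out $x \nsim y$: were the orbits different, orbit-separation would supply opens $U \ni x$, $V \ni y$ with $(U\.G)\cap(V\.G) = \emptyset$, yet for large $n$ the point $x_n\.g_n$ lies both in $U\.G$ (since $x_n \in U$) and in $V \subseteq V\.G$, a contradiction. Hence $y = x\.g_0$ for some $g_0 \in G$, and applying the homeomorphism $z \mapsto z\.g_0^{-1}$ gives $x_n\.(g_n g_0^{-1}) \to x$. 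Taking a covering neighborhood $W$ of~$x$, for all large $n$ both $x_n$ and $x_n\.(g_n g_0^{-1})$ lie in $W$, so $W \cap W\.(g_n g_0^{-1}) \neq \emptyset$, forcing $g_n g_0^{-1} = e$. Thus $g_n = g_0$ for all large~$n$, which is more than enough for~(iv).

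The main obstacle is the converse direction, and within it the reduction to the same-orbit case: the different-orbit possibility is exactly what the orbit-separation clause of ``discrete'' is designed to exclude, while the delicate point is to transport the second convergent sequence back into a neighborhood of the single point~$x$ (via the action of $g_0^{-1}$) so that the covering-space property may be applied to one element $g_n g_0^{-1}$ at a time. Once both sequences are corralled into a single covering neighborhood~$W$, the defining disjointness $W \cap W\.g = \emptyset$ for $g \neq e$ collapses the group elements to a constant.
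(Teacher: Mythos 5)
Your proposal is correct and follows essentially the same route as the paper: finiteness of $G_{\ovl W}$ from Lemma~\ref{lm:equivalent-proper-actions} to build the covering neighborhood, separation of distinct orbits via Lemma~\ref{lm:proper-action-characterization} (you use closedness of $X \x_G X$ where the paper pulls back disjoint neighborhoods from the Hausdorff quotient $X/G$, which is the same fact by Lemma~\ref{lm:DuKo}), and for the converse the identical verification of criterion~(iv) by reducing to a single orbit and translating by $g_0^{-1}$ into one covering neighborhood. The only cosmetic difference is that you shrink the neighborhood using a Hausdorff separation $U_i = A_i \cap (B_i\.g_i^{-1})$ where the paper instead deletes the translates $\ovl V\.g_i$; both are standard and equivalent.
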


\begin{proof}
Let $x \in X$ and $V$ be a neighborhood for $x$ with $\ovl{V}$ compact
in $X$. By Lemma~\ref{lm:equivalent-proper-actions} the set
$\set{g \in G : (\ovl{V}\.g) \cap \ovl{V} \neq \emptyset}$ is compact
and hence finite, since $G$ is discrete. Let us write it as
$\{e,g_1,\dots,g_m\}$. We may shrink $V$ to ensure that
$x \notin \ovl{V}\.g_1$; and by repeating that process at most
$m$~times, we can suppose that $x\.g_i^{-1} \notin \ovl{V}$ for
$i = 1,\dots,m$. The set
$U = V \less (\ovl{V}\.g_1 \cupycup \ovl{V}\.g_m)$ satisfies
$U \cap U\.g = \emptyset$ for all $g \neq e$ in~$G$; this means that
the action of~$G$ is a covering-space action.

By Lemma~\ref{lm:proper-action-characterization} the quotient space
$X/G$ has a Hausdorff topology. Let $x,y \in X$ have different orbits.
Since $\pi(x) \neq \pi(y)$ in~$X/G$, there are disjoint neighborhoods
$U'$ and $V'$ of these points in $X/G$. If $U = \pi^{-1}(U')$ and
$V = \pi^{-1}(V')$, then $z \in (U\.g) \cap V$ implies $z = z'\.g$ for
some $z'\in U$ . This contradicts the assumption that $U'$ and $V'$
are disjoint since $\pi(z) = \pi(z')$ and the action is discrete.

For the converse, we have already noted that every covering-space
action, and hence every discrete action, is free. If $(g_n)$ and
$(x_n)$ are sequences in $G$ and $X$ respectively, with $g_n \to g$
and $x_n\.g_n \to x'$ with $x$ and~$x'$ in different orbits, there are
neighborhoods $V$ of~$x$ and $V'$ of~$x'$ with disjoint orbits. For
large enough $n$ we find that $x_n \in V$ and $x_n\.g_n \in V'$,
contrary to hypothesis; therefore, $x$ and~$x'$ belong to the same
orbit. Let $g \in G$ with $x\.g = x'$ so as to have
$x_n\.g_ng^{-1} \to x$. If $U$ is a neighborhood of $x$ taken from the
covering-space action property, for $n$ large enough both $x_n$ and
$x_n\.g_ng^{-1}$ lie in~$U$, implying that
$U\.g_n g^{-1} \cap U \neq \emptyset$. Thus $g = g_n$ for every $n$
large enough. The result follows once more from
Lemma~\ref{lm:equivalent-proper-actions}.
\end{proof}

Theorem~5.8 in \cite{NovikovTaimanov} states that if the action of $G$
on $X$ is free and discrete, then one can introduce a smooth atlas on
$X/G$ making $X/G$ a smooth manifold for which the projection $X \to
X/G$ is a smooth map; this is a version of the Quotient Manifold
Theorem that will be taken up in
Section~\ref{sec:Husemoller-principal-bundles}. In the following
theorem, gleaned from Example~4.8 of \cite{DoCarmo}, the idea is to
observe that the restriction of the projection map $\pi \: X \to X/G$
to each neighborhood $U$ of the covering-space action is injective,
and that for any two such neighborhoods $U_1$ and $U_2$, the map
$\pi_1^{-1} \circ \pi_2$ is smooth. It provides a smooth structure on
the quotient space simply as a result of the action being a
covering-space action.

\begin{theorem} 
\label{thm:DoCarmo}
If the action of the discrete group $G$ on the smooth manifold $X$ is
a covering-space action, then there is a smooth structure on $X/G$
which makes the projection $X \to X/G$ a local diffeomorphism.
\end{theorem}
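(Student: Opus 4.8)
The plan is to transport charts of $X$ down to $X/G$ by using the covering-space neighbourhoods to invert the projection locally. First I would record two standing facts: each map $x\mapsto x\.g$ is a homeomorphism of $X$ (in fact a diffeomorphism), and $\pi\:X\to X/G$ is open, since for open $O\subseteq X$ the saturation $\pi^{-1}(\pi(O))=\bigcup_{g\in G}O\.g$ is open. Hence around any point I may pick an open set $U$ that is at once a covering-space neighbourhood ($U\cap U\.g=\emptyset$ for $g\neq e$) and the domain of a chart $(\vf,U)$ of $X$; the covering-space condition makes $\pi|_U$ injective, so $\pi|_U\:U\to\pi(U)$ is a continuous open bijection onto an open set, i.e.\ a homeomorphism.

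I would then take the candidate charts on $X/G$ to be the pairs $\bigl(\pi(U),\ \vf\circ(\pi|_U)^{-1}\bigr)$; their domains cover $X/G$ by surjectivity of $\pi$. The real content is the smooth compatibility of two such charts coming from $U_1$ and $U_2$. The key is a local-constancy claim: for $y\in U_2$ with $\pi(y)\in\pi(U_1)$ there is exactly one $g\in G$ with $y\.g\in U_1$. Uniqueness holds because two distinct choices $y\.g,\,y\.g'\in U_1$ would give $y\.g'\in U_1\cap U_1\.(g^{-1}g')$ with $g^{-1}g'\neq e$, violating the covering-space property; and the sets $W_g=\set{y\in U_2:y\.g\in U_1}$ are open and pairwise disjoint. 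On $W_g$ the comparison map $(\pi|_{U_1})^{-1}\circ\pi|_{U_2}$ is just $y\mapsto y\.g$, so the transition map $\vf_1\circ(\pi|_{U_1})^{-1}\circ\pi|_{U_2}\circ\vf_2^{-1}$ restricts on each piece to $\vf_1\circ(x\mapsto x\.g)\circ\vf_2^{-1}$, which is smooth because $x\mapsto x\.g$ is a diffeomorphism of $X$.

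Finally I would read off the local diffeomorphism property: in the charts $(\vf,U)$ on $X$ and $\bigl(\pi(U),\vf\circ(\pi|_U)^{-1}\bigr)$ on $X/G$ the coordinate expression of $\pi$ is $\vf\circ(\pi|_U)^{-1}\circ\pi\circ\vf^{-1}=\mathrm{id}$ on $\vf(U)$, so $\pi$ is locally the identity, hence a local diffeomorphism. Second countability of $X/G$ passes from $X$ through the open continuous surjection $\pi$; I would note that Hausdorffness is deliberately \emph{not} claimed here, as that is precisely what the stronger \emph{discrete} hypothesis provides and is handled in the corollary.

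The step I expect to be the main obstacle is the compatibility check, concentrated in isolating the locally constant group element: the whole argument hinges on showing that $(\pi|_{U_1})^{-1}\circ\pi|_{U_2}$ agrees near each point with a single action map $x\mapsto x\.g$. Once the covering-space condition is used twice — for uniqueness of $g$ and for openness of the pieces $W_g$ — smoothness is immediate from each $x\mapsto x\.g$ being a diffeomorphism.
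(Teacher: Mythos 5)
Your proposal is correct and follows essentially the same route as the paper: transport charts of $X$ to $X/G$ via the local inverses of $\pi$ on covering-space neighbourhoods, and observe that each transition map agrees locally with a single action map $x \mapsto x\.g$, hence is smooth. Your explicit decomposition of the overlap into the disjoint open pieces $W_g$ makes the ``locally constant group element'' step more transparent than the paper's version, but it is the same argument.
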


\begin{proof}
Let $x \in X$ and let $U$ be a open neighborhood of $x$ such that
$U \cap U\.g = \emptyset$ for $g \neq e$. Let
$\psi \: V\subseteq \bR^n \to U$ be a local parametrization at~$x$.
Since $\pi\circ\psi \: V \to X/G$ is injective, $\pi|_U$ is also
injective.
 
Consider the system of local parametrizations $\{(V,\pi\circ\psi)\}$
for $X/G$. It will be shown that
$(\pi\circ\psi_1)^{-1} \circ (\pi\circ\psi_2)$ is differentiable,
whenever
$(\pi\circ\psi_1)(V_1) \cap (\pi\circ\psi_2)(V_2) \neq \emptyset$.

Let $b \in (\pi\circ\psi_1)(V_1) \cap (\pi\circ\psi_2)(V_2)
\subseteq X/G$ and let
$z = \psi_2^{-1} \circ (\pi|_{\psi_2(V_2)})^{-1}(b) \in V_2$. Let
$W \subseteq V_2$ be a neighborhood of $z$ such that
$(\pi|_{\psi_2(V_2)} \circ \psi_2)(W)
\subseteq (\pi\circ\psi_1)(V_1) \cap (\pi\circ\psi_2)(V_2)$. In this
way,
$$
(\pi\circ\psi_1)^{-1} \circ (\pi\circ\psi_2) \bigr|_W 
= \psi_1^{-1} \circ (\pi|_{\psi_1(V_1)})^{-1}
\circ (\pi|_{\psi_2(V_2)}) \circ \psi_2.
$$

To see that
$(\pi|_{\psi_1(V_1)})^{-1} \circ (\pi|_{\psi_2(V_2)})$ is smooth at
$x_2 = (\pi|_{\psi_2(V_2)})^{-1}(b)$, first notice that $x_2$ and
$x_1 = (\pi|_{\psi_1(V_1)})^{-1} \circ (\pi|_{\psi_2(V_2)})(x_2)$ are
equivalent in~$X$, since both lie over~$b$, so there is a
$g \in G$ such that $x_2\.g = x_1$. It follows that the restriction
$(\pi|_{\psi_1(V_1)})^{-1} \circ (\pi|_{\psi_2(V_2)})|_{\psi_2(W)}$
coincides with $\al_g|_{\psi_2(W)}$. As a result, whatever property is
imposed on $\al_g$ (differentiable, $C^k$, smooth), it will be
inherited by $(\pi|_{\psi_1(V_1)})^{-1} \circ (\pi|_{\psi_2(V_2)})$
at~$x_2$. It immediately follows that the projection $\pi$ is smooth
under this smooth structure for~$X/G$.
\end{proof}

The previous theorem generalizes, for example, the construction of the
projective spaces from the action of $\bZ_2$ acting on the spheres, or
the torus from $\bZ^k$ acting on $\bR^k$ and the Klein bottle from
$\bZ_2$ acting on the $2$-torus: see \cite[p.~22]{DoCarmo} and
\cite[p.~143]{NovikovTaimanov}. Note that for a covering-space
action, each orbit is homeomorphic to the discrete group~$G$.

By Lemma~\ref{lm:discrete-vs-proper}, the action is proper; and by
Lemma~\ref{lm:proper-action-characterization}, the quotient $X/G$ is
a Hausdorff space. We conclude the following.

\begin{proposition} 
\label{pr:Novi-Taima}
If the action of the discrete group $G$ on the smooth manifold $X$ is
a discrete action, then $X/G$ is a Hausdorff space.
\qed
\end{proposition}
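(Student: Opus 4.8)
The plan is to chain together the two characterization results already in hand. First I would recall that by Lemma~\ref{lm:discrete-vs-proper}, a continuous action of a discrete group $G$ on a smooth manifold $X$ is free and proper precisely when it is discrete. Since the hypothesis supplies a discrete action, this step immediately delivers that the action is both free and proper.

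With freeness and properness secured, I would invoke Lemma~\ref{lm:proper-action-characterization}: for a continuous and free action of a topological group on a topological space, properness is equivalent to the quotient $X/G$ carrying a Hausdorff topology together with continuity of the translation map. In particular, properness forces the quotient topology on $X/G$ to be Hausdorff (equivalently, $X \x_G X$ is closed in $X \x X$), which is exactly the assertion to be proved.

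The hard part is not visible here, since both nontrivial steps have been absorbed into the cited lemmas; the genuine content lives in Lemma~\ref{lm:discrete-vs-proper}, where properness must be extracted from the combinatorial disjointness conditions defining a discrete action (via the compactness criterion of Lemma~\ref{lm:equivalent-proper-actions}). Once that is granted, the present statement is a one-line consequence. If one preferred to sidestep the full strength of Lemma~\ref{lm:proper-action-characterization}, an alternative route would establish Hausdorffness directly: given $x,y$ in distinct orbits, the discreteness condition furnishes neighborhoods $U \ni x$ and $V \ni y$ whose $G$-saturations $\pi^{-1}(\pi(U))$ and $\pi^{-1}(\pi(V))$ are disjoint, whence $\pi(U)$ and $\pi(V)$ are disjoint open sets in $X/G$ separating $\pi(x)$ and $\pi(y)$, using that $\pi$ is an open map. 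I would nonetheless favor the lemma-chaining argument for its brevity.
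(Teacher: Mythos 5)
Your argument is correct and is precisely the paper's own proof: the text preceding the proposition derives properness from Lemma~\ref{lm:discrete-vs-proper} and then Hausdorffness of $X/G$ from Lemma~\ref{lm:proper-action-characterization}. The alternative direct separation argument you sketch is also fine, but the lemma-chaining route you favor is the one the paper takes.
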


\begin{remark}
In Theorem~\ref{thm:DoCarmo} we have shown the existence of a smooth
structure on $X/G$. By Proposition~\ref{pr:Novi-Taima}, once we
require the action to be discrete, we know that $X/G$ is a Hausdorff
space. Because the projection is a local diffeomorphism, it follows
that $X/G$, like~$X$, is second countable.
\end{remark}

Because $G$ is discrete, $\tau \: X \x_G X \to G$ is trivially
continuous. If the action of $G$ on $X$ is discrete, $X \x_G X$ is
homeomorphic to $X \x G$. By Theorem~\ref{thm:DoCarmo} and
Proposition~\ref{pr:Novi-Taima}, one can form the bundle $(X,\pi,X/G)$.

\begin{definition} 
\label{def:free-covering-G-space}
A \textit{covering $G$-space} is a bundle $(X,\pi,X/G)$ with $G$ a
discrete group, $X$ a smooth manifold with a covering-space (hence
free) action. A bundle $(X,p,B)$ is called a
\textit{covering $G$-bundle}%
\footnote{%
This is not a standard nomenclature, we use it to emphasize the role
of the action being a covering-space action in the smooth setting,
instead of being proper in the topological one.}
if it is isomorphic to a covering $G$-space by an isomorphism
$(1,f) \: (X,\pi,X /G) \to (X,p,B)$, i.e., $p = f \circ \pi$ where
$f \: X /G \to B$ is a \textit{diffeomorphism}.

If the action is discrete, we get a \textit{discrete $G$-space} and a
\textit{discrete $G$-bundle}, respectively.
\end{definition}

By Theorem~\ref{thm:DoCarmo}, the base $B$ in any covering $G$-bundle
$(X,p,B)$ is locally diffeomorphic to its total space~$X$. By
Lemma~\ref{lm:discrete-vs-proper} if the action is discrete, then the
covering $G$-bundle has a free and proper (principal) action. By
Lemma~\ref{lm:continuous-free-and-proper-imply-homeomorphic-fibers},
every fiber is homeomorphic to the discrete group~$G$.

\medskip

Next, we address the principal bundle situation for the case of a
discrete group acting on a smooth manifold, and the corresponding
properties of the translation map.

Up to this point, we have been working with a discrete group $G$
acting continuously on a smooth manifold~$X$. If we ask for a
covering-space action, then $X/G$ has a smooth manifold structure.
Furthermore, the action will be free, so there will be a
well-defined translation map.

We now reformulate the definition of fiber bundle from
Section~\ref{sec:topological-bundles}, in a way suitable for the
smooth situation.

With $X$ and $B$ smooth manifolds, a \textit{(smooth) fiber bundle}
$(X,p,B,F)$ (whose \textit{fiber} is a smooth manifold~$F$) is a
smooth and surjective map $p \: X \to B$ such that at each point
of~$B$, there is a neighborhood $U$ and a diffeomorphism
$\psi \: U \x F \to p^{-1}(U)$ such that
$p \circ \psi \: U \x F \to U$ is the projection $\pr_1$ on the first
factor. The system $\{(U_i,\psi_i)\}$ is called a \textit{coordinate
representation} for the bundle.

The following result (\cite[vol.~I, p.~39]{Greubetal}) identifies
those sets that can be used as the total space of a fiber bundle over
a given manifold.

\begin{proposition} 
\label{pr:sets-as-bundles}
If $B$ and $F$ have smooth structures and $E$ is a set together with a
surjective map $p \: E \to B$ having the following properties:
\begin{enumerate}[label={\textup{(\roman*)}}]
\item 
there is an open covering $\{U_i\}$ of $B$ and a family $\{\psi_i\}$
of bijections $\psi_i \: U_i \x F \to p^{-1}(U_i)$;
\item 
$p \circ \psi_i(b,y) = b$ for every $b \in U_i$, $y \in F$;
\item 
the maps $\psi_{ij} \: (U_i \cap U_j) \x F \to (U_i \cap U_j) \x F$
defined by $\psi_{ij} = \psi_i^{-1} \circ \psi_j$ are diffeomorphisms;
\end{enumerate}
then there is exactly one smooth structure on~$E$ for which
$(E,p,B,F)$ is a fiber bundle with coordinate representation
$\{(U_i,\psi_i)\}$.

Furthermore, if $B$ and $F$ are Hausdorff, so also is~$E$.
\end{proposition}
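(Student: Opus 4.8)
The plan is to transport the product smooth structure of each chart domain $U_i \x F$ onto $p^{-1}(U_i)$ by \emph{declaring} the bijection $\psi_i$ to be a diffeomorphism, and then to check that hypothesis~(iii) makes these pieces agree on overlaps. First I would fix smooth atlases $\{(W_a,\phi_a)\}$ for $B$, refined so that each $W_a$ lies in some $U_i$, and $\{(O_b,\chi_b)\}$ for $F$. For each admissible triple $(i,a,b)$ with $W_a \subseteq U_i$ I set the chart domain $\Om_{iab} := \psi_i(W_a \x O_b) \subseteq E$ and the chart map $\th_{iab} := (\phi_a \x \chi_b) \circ \psi_i^{-1}$. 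Declaring $V \subseteq E$ to be open precisely when every $\psi_i^{-1}(V \cap p^{-1}(U_i))$ is open in $U_i \x F$ gives a topology for which each $\psi_i$ is a homeomorphism onto the open set $p^{-1}(U_i)$; here hypothesis~(ii) is exactly what makes $\psi_j^{-1}(p^{-1}(U_i \cap U_j))$ equal the open set $(U_i \cap U_j) \x F$, so that $p^{-1}(U_i)$ is indeed open. With this topology the maps $\th_{iab}$ are homeomorphisms onto open subsets of a Euclidean space.

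Next I would verify smoothness of the transition maps. For two admissible triples, $\th_{jcd} \circ \th_{iab}^{-1}$ equals $(\phi_c \x \chi_d) \circ \psi_{ji} \circ (\phi_a^{-1} \x \chi_b^{-1})$, where $\psi_{ji} := \psi_j^{-1} \circ \psi_i$ is, by hypothesis~(iii), a diffeomorphism of $(U_i \cap U_j) \x F$; composed with the product-chart diffeomorphisms of $B$ and $F$ it is smooth. Hypothesis~(ii) guarantees moreover that $\psi_{ji}$ leaves the $B$-coordinate untouched, so the base and fiber directions do not mix. The family $\{\th_{iab}\}$ is therefore a smooth atlas, which defines the sought smooth structure on $E$ (equivalently one may feed this atlas into the smooth manifold chart lemma of \cite{Lee}, whose overlap and smoothness conditions are precisely those just checked). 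By construction each $\psi_i$ is then a diffeomorphism $U_i \x F \to p^{-1}(U_i)$, and (ii) reads $p \circ \psi_i = \pr_1$, so $(E,p,B,F)$ is a fiber bundle with the prescribed coordinate representation and $p$ is a smooth submersion.

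Uniqueness is immediate: any smooth structure on $E$ making every $\psi_i$ a diffeomorphism forces each $\th_{iab}$ to be a smooth chart, so any two such structures share the atlas $\{\th_{iab}\}$ and hence coincide. For the final clause, suppose $B$ and $F$ are Hausdorff and take distinct $x,y \in E$. If $p(x) \neq p(y)$, separate them by disjoint open sets in $B$ and pull these back through the continuous map $p$. If $p(x) = p(y) = b$, both points lie in a single $p^{-1}(U_i)$, and $\psi_i^{-1}(x)$, $\psi_i^{-1}(y)$ share the first coordinate $b$ while differing in the $F$-coordinate; separating those in $F$ and pushing the disjoint open sets forward through the homeomorphism $\psi_i$ separates $x$ and~$y$.

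The main obstacle is the transition-map bookkeeping in the second step: one must track carefully which hypothesis governs the base direction and which governs the fiber, and confirm that the local product form $p \circ \psi_i = \pr_1$ is genuinely preserved across overlaps --- this is the one place where (ii) and (iii) are \emph{both} indispensable. The construction of the topology, the uniqueness argument, and the separation argument are then routine.
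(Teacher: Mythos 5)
Your proposal is correct and follows essentially the same route as the paper: transport the product smooth structure of $U_i \x F$ to $p^{-1}(U_i)$ through the bijections $\psi_i$, use hypothesis (iii) to check that the resulting charts overlap smoothly, derive uniqueness from the fact that any smooth structure making the $\psi_i$ diffeomorphisms must contain this atlas, and separate points in $E$ by the two-case argument ($p(x) \neq p(y)$ via Hausdorffness of $B$, $p(x) = p(y)$ via Hausdorffness of $F$). The only cosmetic difference is that you specify the topology by requiring each $\psi_i^{-1}(V \cap p^{-1}(U_i))$ to be open, whereas the paper generates it from the chart images $\wt{\psi_i}(U_i \x W)$; these yield the same topology.
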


\begin{proof} 
We first show uniqueness of the smooth structure. Let $\sA$ and
$\sB$ be atlases on $E$, both making the maps
$\psi \: U \x F \to p^{-1}(U)$ diffeomorphisms. Take
$(V_i,\al_i) \in \sA$, $(V_j,\bt_j) \in \sB$; $U \subseteq B$ a chart
domain with $V_i \cap V_j \cap p^{-1}(U) \neq \emptyset$; and
$(W,\eta)$ a local chart for $U \x F$. In this way,
$\eta \circ \psi^{-1} \circ \al_i^{-1}$ and
$\eta \circ \psi^{-1} \circ \bt_j^{-1}$ are diffeomorphisms on their
restricted domains, so
$\eta \circ \psi^{-1} \circ \al_i^{-1} \circ \bt_j$ is a
diffeomorphism. Therefore, each $\al_i^{-1} \circ \bt_j$ is a
diffeomorphism and $\sA \sim \sB$.

We next provide $E$ with a smooth structure. Let $(W,\eta)$ be a local
chart for $F$. By shrinking the $U_i$ if needed, we may consider each
$U_i$ to be a chart domain (with local chart $\phi_i$) in~$B$. Each
map $\psi_i \: U_i \x F \to p^{-1}(U_i) \subseteq E$ is a bijection.
Take its restriction $\wt{\psi_i} := \psi_i|_{U_i \x W}$ and consider
$\bigl( \wt{\psi_i}(U_i \x W), 
(\phi_i \x\eta)\circ \wt{\psi_i^{-1}} \bigr)$ as a candidate chart
for~$E$, with $\phi_i \x \eta \: 
U_i \x W \to (\phi_i \x \eta)(U_i \x W) \subseteq \bR^{\dim B+\dim F}$
a local diffeomorphism.

By construction, $(\phi_i \x \eta) \circ \wt{\psi_i^{-1}}
\circ ((\phi_j \x\eta') \circ \wt{\psi_j^{-1}})^{-1}
= (\phi_i \x \eta) \circ \wt{\psi_i^{-1}} \circ \wt{\psi_j}
\circ (\phi_j \x \eta')^{-1}
= (\phi_i \x \eta) \circ \wt{\psi_{ij}} \circ (\phi_j \x \eta')^{-1}$
is a diffeomorphism, the $\wt{\psi_{ij}}$ being the appropriate
restrictions of the diffeomorphisms $\psi_{ij}$. The topology of~$E$
is given by the arbitrary unions of finite intersections of the sets
$\wt{\psi_i}(U_i \x W)$. Note that the
$\psi_i \: U_i \x F \to p^{-1}(U_i)$ are (locally) smooth maps since
by taking local charts as declared above, each
$(\phi_j \x \eta) \circ \wt{\psi_j^{-1}} \circ \wt{\psi_i}
\circ (\phi_i \x \eta)^{-1}$ is smooth.

By~(ii), the restriction of $p$ to $p^{-1}(U_i)$ is
$\pr_1 \circ \psi_i^{-1} \: p^{-1}(U_i) \to U_i$. By taking local
charts as before, it follows that
$\phi_i \circ p \circ \wt{\psi_i} \circ (\phi_i \x \eta)^{-1}
= \phi_i \circ \pr_1 \circ (\phi_i \x \eta)^{-1}$ which is a smooth
map. Hence $p$~is smooth on $E$ and $\{(U_i,\psi_i)\}$ is a local
decomposition for~$p$.

If $x \neq x'$ in $E$ with $p(x) \neq p(x')$, we can consider disjoint
chart domains $U_i$ and $U_j$ around $p(x)$ and $p(x')$ -- because $B$
is Hausdorff -- to construct disjoint neighborhoods
$\wt{\psi_i}(U_i \x W)$ for~$x$ and $\wt{\psi_j}(U_j \x W')$ for~$x'$
in~$E$. On the other hand, if $p(x) = p(x')$, we can shrink $U_i$ and
$U_j$ to $U_i \cap U_j$ to get $x,x' \in p^{-1}(U_i) \approx U_i \x F$
by~(i). We can then take disjoint chart domains $W$ and $W'$ in $F$ --
because $F$ is Hausdorff -- to construct disjoint open neighborhoods
$\wt{\psi_i}(U_i \x W)$ for~$x$ and $\wt{\psi_i}(U_i \x W')$ for~$x'$.
Thereby $E$ is seen to be a Hausdorff space.
\end{proof}

In the previous theorem, if one asks that $B$ and $F$ be paracompact
(or have a countable basis), the same will hold for~$E$. That is to
say, if $B$ and $F$ are manifolds, then $E$ will be a manifold, too.

\medskip

We know that there is a homeomorphism
$G \approx \{b\} \x G \to p^{-1}(b)$ of discrete spaces, which is
trivially a diffeomorphism. More generally, the fiber can be taken to
be a Lie group~$G$. In that case, a (smooth) \textit{principal fiber
$G$-bundle} is a smooth fiber bundle $(X,p,B,G)$ where $X$ carries a
smooth (right) action of $G$, together with a coordinate
representation $\{(U_i,\psi_i)\}$ such that
$\psi_i(b,gh) = \psi_i(b,g)\.h$ for $b \in U_i$ and $g,h \in G$, as in
\eqref{eq:principal-condition}.

The next natural question is, are covering-space $G$-bundles with a
smooth action actually smooth principal $G$-bundles? That is to say,
do they have coordinate representations $\{(U_i,\psi_i)\}$ satisfying
\eqref{eq:principal-condition}? In order to obtain a Hausdorff and
paracompact base, we must demand that the action be discrete.

\begin{theorem} 
\label{thm:covering-principal-is-principal-discrete-case}
Every discrete $G$-bundle $(X,p,B)$ with a smooth discrete action is a
smooth principal fiber $G$-bundle.
\end{theorem}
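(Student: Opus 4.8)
The plan is to construct, on the model bundle $(X,\pi,X/G)$, a coordinate representation $\{(V_i,\psi_i)\}$ by diffeomorphisms satisfying the principal condition~\eqref{eq:principal-condition}, and then to push it forward to $(X,p,B)$ through the diffeomorphism~$f$. Since the action is discrete it is in particular a covering-space action and free, so by Theorem~\ref{thm:DoCarmo} the quotient $X/G$ carries a smooth structure for which $\pi\:X\to X/G$ is a local diffeomorphism, and by Proposition~\ref{pr:Novi-Taima} the space $X/G$ is Hausdorff (and second countable, as noted in the remark following it); by Lemma~\ref{lm:discrete-vs-proper} the action is moreover proper, so the translation map $\tau\:X\x_G X\to G$ is well defined.

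First I would build smooth local sections. Fix $x\in X$ and let $U$ be a covering-space neighborhood of~$x$, so that $U\cap U\.g=\emptyset$ for every $g\neq e$. Then $\pi|_U$ is injective, and by the construction in Theorem~\ref{thm:DoCarmo} it is a diffeomorphism of $U$ onto the open set $V:=\pi(U)\subseteq X/G$. Its inverse $s:=(\pi|_U)^{-1}\:V\to U\subseteq X$ is thus a smooth local section with $\pi\circ s=1_V$. Following the construction in Proposition~\ref{pr:local-sections}, I would define $\psi_s\:V\x G\to\pi^{-1}(V)$ by $\psi_s(v,g):=s(v)\.g$. This map is smooth, being the composite of the smooth section~$s$ with the smooth action; it satisfies $\psi_s(v,gh)=s(v)\.(gh)=(s(v)\.g)\.h=\psi_s(v,g)\.h$, which is exactly~\eqref{eq:principal-condition}; and by the freeness argument of Proposition~\ref{pr:local-sections} it is a bijection onto $\pi^{-1}(V)$ (applying $\pi$ forces the first coordinate, and freeness forces the second).

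The main point is to check that $\psi_s^{-1}$ is smooth, and this is exactly where discreteness of~$G$ enters. Because $U=s(V)$ is a covering-space neighborhood and each $\al(g)$ is a diffeomorphism, the sheets $s(V)\.g$ are open and pairwise disjoint, so $\pi^{-1}(V)=\bigcup_{g\in G} s(V)\.g$ is a disjoint union of open sets. On the sheet $s(V)\.g$ one has $\psi_s^{-1}(y)=(\pi(y),g)$ with $g$ constant, hence smooth; since the sheets are open and disjoint, $\psi_s^{-1}$ is smooth on all of $\pi^{-1}(V)$. Equivalently, $\psi_s^{-1}(y)=\bigl(\pi(y),\tau(s(\pi(y)),y)\bigr)$, and $\tau$ is locally constant because $G$ is discrete. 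Therefore each $\psi_s$ is a diffeomorphism, and letting $x$ range over~$X$ yields a coordinate representation of $(X,\pi,X/G)$ by trivializations obeying~\eqref{eq:principal-condition}; that is, $(X,\pi,X/G)$ is a smooth principal fiber $G$-bundle.

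Finally I would transport this structure to $(X,p,B)$. Since $p=f\circ\pi$ with $f\:X/G\to B$ a diffeomorphism, replacing each chart $(V,\psi_s)$ by $\bigl(f(V),\,\psi_s\circ(f^{-1}\x 1_G)\bigr)$ produces a coordinate representation of $(X,p,B)$ over the open cover $\{f(V)\}$ of~$B$; the identity $p^{-1}(f(V))=\pi^{-1}(V)$ and the computation $\psi_s(f^{-1}(b),gh)=\psi_s(f^{-1}(b),g)\.h$ show these are diffeomorphisms still satisfying~\eqref{eq:principal-condition}. Hence $(X,p,B)$ is a smooth principal fiber $G$-bundle. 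I expect the only delicate step to be the smoothness of $\psi_s^{-1}$, which rests on the covering-space property splitting $\pi^{-1}(V)$ into disjoint open sheets (equivalently, on $\tau$ being locally constant for discrete~$G$); everything else is a transcription of the topological arguments of Proposition~\ref{pr:local-sections} into the smooth category.
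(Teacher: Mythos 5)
Your proposal is correct and follows essentially the same route as the paper: both reduce to the model bundle $(X,\pi,X/G)$ via the isomorphism $(1,f)$, use Theorem~\ref{thm:DoCarmo} to identify a covering-space neighborhood $U$ diffeomorphically with $V=\pi(U)$, define the trivialization by $(v,g)\mapsto s(v)\.g$ (the paper writes this as $\psi_x(\pi(x'),g)=x'\.g$), and obtain smoothness of the inverse from the decomposition of $\pi^{-1}(V)$ into the disjoint open sheets $U\.g$, which is exactly the paper's map $\theta_x$. Your observation that this amounts to $\tau$ being locally constant for discrete $G$ is a nice gloss but the argument is the same.
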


\begin{proof}
Given the isomorphism $(1,f)$ of
Definition~\ref{def:free-covering-G-space} with $(X,\pi,X/G)$, it is
enough to show that $(X,\pi,X/G)$ has the desired coordinate
representation.

Since the action is a covering-space action, each $x \in X$ has an
open neighborhood $U_x$ such that $U_x \cap U_x\.g = \emptyset$ for
all $g \neq e$ in~$G$. By the local diffeomorphism established in
Theorem~\ref{thm:DoCarmo}, we can shrink $U_x$ enough to be
diffeomorphic to $V_x = \pi(U_x)$, which is open in (the quotient
topology of) $X/G$; and so that $X/G$ is covered by all such open
sets~$\{V_x\}$.

Because the group $G$ is discrete, $\pi^{-1}(V_x)$ is the disjoint
union $\biguplus_{g\in G} U_x\.g$. Consider the homeomorphism
$\theta_x \: \biguplus_{g\in G} U_x\.g \to U_x \x G$ with
$\theta_x(x'\.g) = (x',g)$. On this domain we can consider the
(disjoint) charts $\set{U_x\.g : g\in G}$, with each $U_x\.g$
diffeomorphic to $U_x$ since the action is smooth. On the codomain,
since $G$ is discrete, the charts can be taken as
$\set{U_x \x \{g\} : g \in G}$, which are also diffeomorphic to $U_x$
and~$V_x$, as noted before. In this way $\theta_x$ is a smooth map;
its inverse $\theta_x^{-1}$ is the restriction of the action of $G$ on
$X$ to~$U_x$.

Define maps $\psi_x \: V_x \x G \to \pi^{-1}(V_x)$ by using the
diffeomorphism between $U_x$ and $V_x = \pi(U_x)$, namely
$\psi_x(\pi(x'),g) := x'\.g = \theta_x^{-1}(x',g)$. Thus $\psi_x$ is
also a diffeomorphism, which furthermore satisfies
$\pi(\psi_x(\pi(x'),g)) = \pi(x'\.g) = (x'\.g)G = x'\.G$. Finally note
that 
$\psi_x(\pi(x'), gh) = x'\.gh = (x'\.g)\.h = \psi_x(\pi(x'),g)\.h$.

The Hausdorff and paracompact structures on the base $B$ follow from
the discreteness of the action, which makes the quotient space $X/G$
Hausdorff and paracompact. The result is straightforward from the
isomorphism $(1,f)$.
\end{proof}

By Lemma~\ref{lm:proper-action-characterization} we know that a free
action of a topological group $G$ on a topological space is proper if
and only if the quotient space has a Hausdorff topology and the
translation map $\tau$ is continuous. By
Lemma~\ref{lm:discrete-vs-proper}, a continuous action of a discrete
group $G$ on a smooth manifold $X$ is free and proper if and only if
it is discrete. From Theorem~\ref{thm:DoCarmo} we then obtain the
following conclusion.

\begin{corollary} 
\label{cor:quotient-manifold-and-tau-smooth-1}
If the action of a discrete group $G$ on a smooth manifold $X$ is
smooth and discrete, then $X/G$ has the structure of a smooth manifold
locally diffeomorphic to $X$ and the translation map
$\tau \: X \x_G X \to G$ is continuous.
\qed
\end{corollary}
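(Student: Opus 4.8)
The plan is to assemble this corollary directly from results already established in the excerpt, since it is explicitly framed as an immediate consequence. The statement has two conclusions to verify: first, that $X/G$ carries a smooth manifold structure locally diffeomorphic to $X$; and second, that the translation map $\tau \: X \x_G X \to G$ is continuous.

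For the first conclusion, I would invoke Theorem~\ref{thm:DoCarmo}. The hypothesis is that the action is smooth and \emph{discrete}; in particular it is a covering-space action (by the definition of discrete action, which requires the covering-space property together with the orbit-separation condition). Theorem~\ref{thm:DoCarmo} applies to any covering-space action and produces a smooth structure on $X/G$ for which the projection $\pi \: X \to X/G$ is a local diffeomorphism. A local diffeomorphism immediately witnesses that $X/G$ is locally diffeomorphic to~$X$, which is exactly the first assertion.

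For the second conclusion, I would argue that $\tau$ is continuous by appealing to Lemma~\ref{lm:proper-action-characterization}. That lemma states that a continuous free action is proper if and only if $X/G$ is Hausdorff and $\tau$ is continuous. To use the ``only if'' direction I need the action to be proper. This is supplied by Lemma~\ref{lm:discrete-vs-proper}, which says that a continuous action of a discrete group on a smooth manifold is free and proper precisely when it is discrete---and the hypothesis here is that the action is discrete. Hence the action is free and proper, and Lemma~\ref{lm:proper-action-characterization} then yields both that $X/G$ is Hausdorff and that $\tau$ is continuous. The continuity of $\tau$ is the piece I actually need; the Hausdorff conclusion is a welcome byproduct (and is also recorded separately in Proposition~\ref{pr:Novi-Taima}).

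There is no genuine obstacle here, since every ingredient has been proved earlier; the only thing to be careful about is citing the correct implications of the two lemmas and checking that ``discrete action'' feeds the hypotheses of both Theorem~\ref{thm:DoCarmo} (via the covering-space property) and Lemma~\ref{lm:discrete-vs-proper} (via properness and freeness). If I wanted to be entirely self-contained I could also note that continuity of $\tau$ can alternatively be seen directly from the fact that $G$ is discrete, so that $\tau$ is locally constant and hence trivially continuous once $X \x_G X$ is understood as a disjoint union of the preimages $\tau^{-1}(g)$; but the cleaner route is simply to cite Lemmas~\ref{lm:discrete-vs-proper} and~\ref{lm:proper-action-characterization} together with Theorem~\ref{thm:DoCarmo}, exactly as the paragraph preceding the corollary already indicates.
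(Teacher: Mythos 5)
Your argument is correct and follows essentially the same route as the paper, which derives this corollary in the paragraph immediately preceding it by combining Lemma~\ref{lm:discrete-vs-proper} (discrete $\Rightarrow$ free and proper), Lemma~\ref{lm:proper-action-characterization} (free and proper $\Rightarrow$ $\tau$ continuous and $X/G$ Hausdorff), and Theorem~\ref{thm:DoCarmo} (covering-space action $\Rightarrow$ smooth structure on $X/G$ with $\pi$ a local diffeomorphism). Your parenthetical observation that continuity of $\tau$ also follows trivially from the discreteness of $G$ matches a remark the paper itself makes shortly afterward, so nothing further is needed.
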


A continuous translation map is equivalent to a homeomorphism 
$\ga \: X \x G \stackrel{\approx}{\to} X \x_G X$. If we are to provide
$X \x_G X$ with a smooth structure in order to obtain a differentiable
map $\tau \: X \x_G X \to G$ with $G$ discrete, that will require, for
any chart $(U,\vf)$ on $X \x_G X$ with $\vf \: U \to V$ open
in~$\bR^k$, that the map $\tau \circ \vf^{-1} \: V \to \{g\}$ be
differentiable. Since this is actually a constant map, any smooth
structure on $X \x_G X$ will suffice. By Lemma~\ref{lm:DuKo} and since
$X/G$ is in particular Hausdorff under a discrete action, $X \x_G X$
is a closed subset of $X \x X$. We next show that for a discrete group
acting on a smooth manifold, there is a preferred smooth structure on
$X \x_G X$ related to $X \x G$.

\begin{corollary} 
\label{cor:quotient-manifold-and-tau-smooth-2}
If the action of a discrete group $G$ on a smooth manifold $X$ is
smooth and discrete, then $X \x_G X$ has a smooth manifold structure
diffeomorphic to $X \x G$, and the translation map is trivially
smooth.
\end{corollary}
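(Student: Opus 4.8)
The plan is to obtain the smooth structure on $X \x_G X$ by transport of structure along the canonical map, and then to read off smoothness of $\tau$ from the discreteness of $G$.

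First I would collect the facts already available. Since the action is smooth and discrete, Lemma~\ref{lm:discrete-vs-proper} tells us it is free and proper. Freeness gives, via Lemma~\ref{lm:free-action}, that the canonical map $\ga \: X \x G \to X \x_G X$ is a bijection; properness together with freeness gives, via Corollary~\ref{cor:continuous-free-proper-action}, that $\ga$ is a homeomorphism onto the (closed) subspace $X \x_G X$ of $X \x X$. Because $G$ is discrete, $X \x G = \biguplus_{g\in G}\, X \x \{g\}$ is a smooth manifold: a disjoint union of copies of $X$, one for each $g \in G$.

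Next I would transport this smooth structure across $\ga$. Concretely, for every chart $(W,\eta)$ of $X \x G$ I declare $\bigl(\ga(W),\, \eta \circ \ga^{-1}\bigr)$ to be a chart of $X \x_G X$. Since $\ga$ is a homeomorphism, each $\ga(W)$ is open in the relative topology of $X \x_G X$, so the transported topology agrees with the subspace topology inherited from $X \x X$; and the transition maps of the transported atlas are literally the transition maps of $X \x G$, hence smooth. By construction $\ga$ and $\ga^{-1}$ are then smooth, so $X \x_G X \isom X \x G$ as smooth manifolds, with Hausdorffness (and any countability one assumes of $X \x G$) inherited through $\ga$.

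Finally I would treat the translation map. By definition $\tau = \pr_2 \circ \ga^{-1}$, so under the identification $\ga$ it is just the projection $\pr_2 \: X \x G \to G$. On each sheet $X \x \{g\}$ this projection is the constant map with value $g$, hence locally constant. As $G$ is a $0$-dimensional manifold, every locally constant map into $G$ is smooth; therefore $\tau$ is smooth, matching the continuity already obtained in Corollary~\ref{cor:quotient-manifold-and-tau-smooth-1}. I do not expect a real obstacle: the whole content sits in the transport of structure, whose only delicate point is the compatibility of the transported topology with the relative topology on $X \x_G X$, and that is precisely guaranteed by $\ga$ being a homeomorphism rather than a mere bijection.
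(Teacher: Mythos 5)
Your proposal is correct and follows essentially the same route as the paper: both identify $X \x_G X$ with $X \x G$ via the homeomorphism $\ga$ (equivalently, the continuity of $\tau$), transport the obvious smooth structure of the disjoint union $\biguplus_{g\in G} X \x \{g\}$ across $\ga$ (the paper writes out the transported atlas $\ovl{U}_{i,g} = \ga(U_i \x \{g\})$ explicitly), and then observe that under this identification $\tau$ becomes the projection $\pr_2$, hence smooth.
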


\begin{proof}
By Lemma~\ref{lm:XxG-homeomorphism}, if $\tau$ is
continuous, we know that $X \x_G X$ is homeomorphic to $X \x G$ via
the
inverse $\ga^{-1}$ of the canonical map. Since $G$ has the discrete
topology, $X \x G$ has trivially a manifold structure. If $X$ is
Hausdorff and paracompact, so too will be $X \x G$ and $X \x_G X$.
Indeed, if $\set{(U_i,\phi_i) : i \in I}$ is an atlas for $X$, then
$\set{(U_i \x \{g\},\phi_{i,g}) : i \in I, \ g \in G}$, with
$\phi_{i,g}(x,g) = \phi_i(x)$, is an atlas for $X \x G$. Our candidate
atlas on $X \x_G X$ is
$\set{(\ovl{U}_{i,g}, \bar\phi_{i,g}) : {i \in I},\ g \in G}$ with
$\ovl{U}_{i,g} = \set{(x,x\.g) : x \in U_i, \ g \in G}$ (open in
$X \x_G X$ and homeomorphic to $U_i \x \{g\}$) and
$\bar\phi_{i,g}(x,g\.x) := \phi_i(x)$. Note that
$\ovl{U}_{i,g} \cap \ovl{U}_{j,h} \neq \emptyset$ if and only if
$g = h$ and $U_i \cap U_j \neq \emptyset$. In that intersection,
$\bar\phi_{i,g} \circ \bar\phi_{j,g}^{-1} = \phi_i \circ \phi_j^{-1}$,
so this is indeed an atlas for $X \x_G X$. With this structure,
$X \x_G X$ is diffeomorphic to $X \x G$ by taking $(x,x\.g)$ to
$(x,g)$; and now $\tau \: X \x_G X \to G$ composes this with the
projection $\pr_2$ on $X \x G$, and hence it is a smooth map.
\end{proof}

The next concept is independent of whether or not $G$ is discrete, and
will be used later in Section~\ref{sec:Husemoller-principal-bundles}
where $G$ is a Lie group. Consider a principal (right) $G$-bundle
$(X,p,B)$ and a left $G$-space $F$. On $X \x F$ a $G$-structure is
given by putting $(x,y)\.g := (x\.g, g^{-1}\.y)$. (The same $\cdot$ is
used to denote different actions of~$G$.) Denote the quotient space
obtained from this action by $(X \x F)/G$. Define the map
$p_F \: (X \x F)/G \to B : (x,y)\.G \mapsto p(x)$. This map is well
defined thanks to the freedom of the action in a principal $G$-bundle.
\[
\vcenter{
\xymatrix{\ar @{} [dr]
X \ar@{<->}[r]^1 \ar[d]_\pi & X \ar[d]^p
\\
X/G \ar[r]^f & B}}
\qquad  \implies  \qquad
\vcenter{
\xymatrix{\ar @{} [dr]
X \x F \ar[r]^(.43){\pi_\x} \ar[d]_{p \x 1} & (X \x F)/G \ar[d]^{p_F}
\\
B \x F \ar[r]^(.55){\pr_1} & B.}}
\]

The quadruple $((X \x F)/G,p_F,B,F)$ is called a \textit{fiber bundle
with fiber} $F$, \textit{associated} to the principal $G$-bundle
$(X,p,B)$.

We shall show in Corollary~\ref{cor:diff-fiber-covering-space} that
for the smooth fiber bundle with fiber~$F$ associated to the
covering-space principal $G$-bundle $(X,p,B)$, each $p_F^{-1}(b)$ is
diffeomorphic to $F$. The next question to be addressed will be: is an
associated smooth fiber bundle with fiber $F$ actually a fiber bundle?
This is dealt with in
Proposition~\ref{pr:proper-discont-fiber-bundle}. A trivial but useful
observation is contained in the following lemma.

\begin{lemma} 
\label{lm:covering-space-implies-covering-space}
If the action of $G$ on $X$ is discrete, and both actions of $G$ on
$X$ and~$F$ are smooth, then the action of $G$ on $X \x F$ is also a
discrete smooth action. Thus, the quotient space $(X \x F)/G$ has a
smooth manifold structure locally diffeomorphic to $X \x F$ and the
map $p_F \: (X \x F)/G \to B$ is smooth.
\end{lemma}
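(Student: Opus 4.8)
The plan is to establish that the product action of $G$ on $X \x F$ given by $(x,y)\.g = (x\.g, g^{-1}\.y)$ is free, smooth, and proper, and then to invoke Lemma~\ref{lm:discrete-vs-proper} to upgrade these properties to discreteness; the remaining assertions then follow from Theorem~\ref{thm:DoCarmo} and Proposition~\ref{pr:Novi-Taima}.

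First I would record smoothness and freeness, both of which are immediate. For each $g \in G$ the map $(x,y) \mapsto (x\.g, g^{-1}\.y)$ is the product of the diffeomorphism $x \mapsto x\.g$ on $X$ with the diffeomorphism $y \mapsto g^{-1}\.y$ on $F$, and since $G$ is discrete this is exactly what it means for the action on $X \x F$ to be smooth. Freeness follows by inspecting the first coordinate: if $(x\.g, g^{-1}\.y) = (x,y)$ then $x\.g = x$, and the freeness of the action on $X$ (a covering-space, hence free, action) forces $g = e$.

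The key step is properness, for which I would use criterion~(iii) of Lemma~\ref{lm:equivalent-proper-actions} applied to the smooth manifold $X \x F$. Given a compact set $K \subseteq X \x F$, put $K_X := \pr_1(K)$, which is compact in $X$. If $g$ lies in $G_K = \set{g \in G : (K\.g) \cap K \neq \emptyset}$, then there is $(x,y) \in K$ with $(x\.g, g^{-1}\.y) \in K$; in particular $x$ and $x\.g$ both lie in $K_X$, so $g$ belongs to $\set{h \in G : (K_X\.h)\cap K_X \neq\emptyset}$. Because the action of $G$ on $X$ is discrete, it is free and proper by Lemma~\ref{lm:discrete-vs-proper}, so this last set is compact by Lemma~\ref{lm:equivalent-proper-actions}(iii); being a subset of the discrete group $G$, it is finite. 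Hence $G_K$ is finite, therefore compact, and the action of $G$ on $X \x F$ is proper. Now Lemma~\ref{lm:discrete-vs-proper}, applied to the free and proper smooth action of the discrete group $G$ on the smooth manifold $X \x F$, shows that this action is discrete. This properness reduction---controlling $G_K$ by its first-coordinate projection---is the only point requiring genuine care; everything else is a direct appeal to the results already in hand.

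Finally, since the action is discrete, and in particular a covering-space action, Theorem~\ref{thm:DoCarmo} furnishes a smooth structure on $(X \x F)/G$ for which the quotient map $\pi_\x \: X \x F \to (X\x F)/G$ is a local diffeomorphism; thus $(X \x F)/G$ is locally diffeomorphic to $X \x F$, it is Hausdorff by Proposition~\ref{pr:Novi-Taima}, and, being the image of the second-countable space $X \x F$ under a local diffeomorphism, it is second countable, hence a smooth manifold. For the smoothness of $p_F$, I would read off the relation $p_F \circ \pi_\x = \pr_1 \circ (p \x 1_F)$ from the right-hand square of the displayed diagram: the right-hand side is smooth, and since $\pi_\x$ is a local diffeomorphism, composing locally with a smooth local inverse of $\pi_\x$ exhibits $p_F$ as smooth.
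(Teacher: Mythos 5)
Your proposal is correct, and it takes a genuinely different route from the paper. The paper verifies the definition of a discrete action directly: it shows the product action is a covering-space action by taking $W = U \x F$ with $U$ a covering-space neighborhood of $x$ in $X$, and then attempts to separate distinct orbits of $X \x F$ by taking $W = U \x F$, $W' = U' \x F$ where $U$, $U'$ have disjoint orbits in $X$. You instead show the product action is free, smooth, and proper --- controlling $G_K$ for compact $K \subseteq X \x F$ by the inclusion $G_K \subseteq G_{\pr_1(K)}$ and invoking Lemma~\ref{lm:equivalent-proper-actions}(iii) for the action on $X$ --- and then upgrade to discreteness via Lemma~\ref{lm:discrete-vs-proper}. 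Your route costs an extra appeal to the properness machinery, but it buys uniformity: the paper's separation step only works when the $X$-components $x$ and $x'$ lie in \emph{different} $G$-orbits of $X$, and it silently skips the case where $(x,y)$ and $(x',y')$ lie in different orbits of $X \x F$ while $x$ and $x'$ share an orbit in $X$ (e.g.\ $x = x'$ with $y \neq y'$, which gives distinct orbits by freeness of the action on~$X$); your properness argument handles all cases at once. The concluding steps --- Theorem~\ref{thm:DoCarmo} and Proposition~\ref{pr:Novi-Taima} for the quotient, and the factorization $p_F \circ \pi_\x = \pr_1 \circ (p \x 1_F)$ together with the local invertibility of $\pi_\x$ for the smoothness of $p_F$ --- coincide with the paper's.
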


\begin{proof}
Smoothness follows immediately. For the action of $G$ on $X \x F$
given by $(x,y)\.g = (x\.g,g^{-1}\.y)$ to be a covering-space action,
we require that each $(x,y) \in X \x F$ has a neighborhood
$W \subset X \x F$ such that $W \cap W\.g = \emptyset$ for all
$g \neq e$. By hypothesis, there is an open neighborhood $U$ of $x$ in
$X$ such that $U\.g \cap U = \emptyset$ for all $g \in G \less \{e\}$.
Consider $W = U \x F$ which is open in $X \x F$. If $(x',y')$ belongs
to $W\.g \cap W \subset (U\.g \x F) \cap (U \x F)$ for some $g \in G$,
then $x' \in U\.g \cap U$, which forces $g = e$.

Next, let $(x,y)$ and $(x',y')$ in $X \x F$ have different orbits. We
must show that there are neighborhoods $W$ and $W'$ of these points
whose orbits are disjoint. By hypothesis, there are neighborhoods $U$
of~$x$ and $U'$ of~$x'$ whose orbits in~$X$ are disjoint. So it is
enough to consider $W = U \x F$ and $W' = U' \x F$ to conclude that
the action of $G$ on $X \x F$ is discrete.

The result follows from Theorem~\ref{thm:DoCarmo} and
Proposition~\ref{pr:Novi-Taima}. The smoothness of $p_F$ follows when
one considers it as the factorization of the composition
$X \x F \stackrel{\pi_\x}{\longrightarrow} X \stackrel{p}{\to} B$,
using that the projection $\pi_\x \: X \x F \to (X \x F)/G$ is a local
diffeomorphism.
\end{proof}

\begin{corollary} 
\label{cor:diff-fiber-covering-space}
If the action of $G$ on $X$ is a discrete smooth action and
$((X \x F)/G,p_F,B,F)$ is a smooth fiber bundle associated to the
covering principal $G$-bundle $(X,p,B)$, then for each $b \in B$ the
fiber $F$ is diffeomorphic to $p_F^{-1}(b)$.
\end{corollary}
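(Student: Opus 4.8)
The plan is to build an explicit diffeomorphism $\phi \: F \to p_F^{-1}(b)$ by fixing one point over $b$ in the principal bundle and using it to select a canonical representative for every class in the fiber. Fix $b \in B$ and choose $x_0 \in X$ with $p(x_0) = b$. Since $(X,p,B)$ is a covering principal $G$-bundle, its orbits coincide with its fibers, so $p^{-1}(b) = \sO_{x_0} = x_0\.G$, and the action is free. I would then set $\phi(y) := \pi_\x(x_0,y) = (x_0,y)\.G$.

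First I would verify that $\phi$ is a bijection onto $p_F^{-1}(b)$. For surjectivity, any class $(x,y)\.G \in p_F^{-1}(b)$ satisfies $p(x) = b$, so $x = x_0\.g$ for a unique $g\in G$; applying the $G$-action on $X\x F$ gives $(x,y)\.g^{-1} = (x_0, g\.y)$, whence $(x,y)\.G = \phi(g\.y)$. For injectivity, $\phi(y_1) = \phi(y_2)$ yields $(x_0\.g, g^{-1}\.y_1) = (x_0,y_2)$ for some $g$; then $x_0\.g = x_0$ and freeness force $g = e$, hence $y_1 = y_2$.

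The smoothness of $\phi$ and of its inverse I would read off from a single chart on $(X\x F)/G$. By Lemma~\ref{lm:covering-space-implies-covering-space} the projection $\pi_\x \: X\x F \to (X\x F)/G$ is a local diffeomorphism. Choosing a covering-space neighborhood $U_{x_0}\ni x_0$ with $U_{x_0}\cap U_{x_0}\.g = \emptyset$ for $g\neq e$, this disjointness shows both that $p^{-1}(b)\cap U_{x_0} = \{x_0\}$ and that $\pi_\x$ is injective on $U_{x_0}\x F$; hence $\pi_\x$ restricts to a diffeomorphism of $U_{x_0}\x F$ onto the open set $\pi_\x(U_{x_0}\x F)$.

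The one delicate point, which is really the crux, is that the entire fiber $p_F^{-1}(b)$ lives inside this one chart and matches a coordinate slice. Because every class in $p_F^{-1}(b)$ has the representative $(x_0,y)\in U_{x_0}\x F$, we get $p_F^{-1}(b)\subseteq \pi_\x(U_{x_0}\x F)$; transporting through the chart, it corresponds to $\{(x,y)\in U_{x_0}\x F : p(x)=b\} = (p^{-1}(b)\cap U_{x_0})\x F = \{x_0\}\x F$, an embedded submanifold diffeomorphic to $F$. Under this identification $\phi$ is just $y\mapsto(x_0,y)$ followed by the chart map, so $\phi$ is the desired diffeomorphism $F \isom p_F^{-1}(b)$. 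The main obstacle is precisely verifying this containment-in-one-chart and the slice identification; everything else is formal.
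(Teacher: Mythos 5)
Your proof is correct, but it takes a different route from the paper's. The paper also starts from the map $y \mapsto (x_0,y)\.G$, but it establishes that this is a diffeomorphism by exhibiting an explicit smooth two-sided inverse: the map $(x,y) \mapsto \tau(x_0,x)\.y$ on $p^{-1}(b)\x F$, which is $G$-invariant and hence descends to $p_F^{-1}(b) \to F$; its smoothness is drawn from Corollary~\ref{cor:quotient-manifold-and-tau-smooth-2} on the smoothness of the translation map. You instead localize: you exploit the covering-space neighborhood $U_{x_0}$ to show that $\pi_\x$ is injective (hence a diffeomorphism onto an open set) on $U_{x_0}\x F$, that the whole fiber $p_F^{-1}(b)$ sits inside this single chart, and that it corresponds there to the slice $\{x_0\}\x F$. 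Your argument has the virtue of making completely explicit what smooth structure $p_F^{-1}(b)$ carries (that of an embedded submanifold identified with a coordinate slice), a point the paper's proof leaves somewhat implicit when it factors $k_{x_0}$ through the quotient. The trade-off is that your argument leans essentially on the discreteness of $G$ --- the fiber $p^{-1}(b)$ meets $U_{x_0}$ in the single point $x_0$, which is what lets the entire fiber $p_F^{-1}(b)$ fit into one chart --- whereas the paper's translation-map argument is the one that survives the passage to a general Lie group (compare Proposition~\ref{pr:F-homeomorphic-fibers} and Theorem~\ref{thm:associated-bundle}). Both proofs are complete for the statement at hand.
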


\begin{proof}
First note that 
$p_F^{-1}(b) = \set{(x,y)\.G : x \in p^{-1}(b), \ y \in F}$. Let
$p(x_0) = b$ for some $x_0 \in X$ and consider the map
$k_{x_0} \: p^{-1}(b) \x F \to F : k_{x_0}(x,y) = \tau(x_0,x)\.y$. By
Corollary~\ref{cor:quotient-manifold-and-tau-smooth-2}, $k_{x_0}$ is
smooth as a composition of smooth maps. Since
$k_{x_0}(x\.g, g^{-1}\.y) = \tau(x_0,x\.g) g^{-1}\.y = \tau(x_0,x)\.y
= k_{x_0}(x,y)$, one can factor through the quotient map
$X \x F \to (X \x F)/G$ to get a smooth map
$k'_{x_0} \: p_F^{-1}(b) \to F 
: (x,y)\.G = (x_0\.\tau(x_0,x),y)\.G \mapsto \tau(x_0,x)\.y$. 

The map $f_{x_0} \: F \to p_F^{-1}(b) \subseteq (X \x F)/G
: y \mapsto (x_0,y)\.G$ factors as 
$$
F \to X \x F \to (X \x F)/G : y \mapsto (x_0,y) \mapsto (x_0,y)\.G.
$$
By
Lemma~\ref{lm:covering-space-implies-covering-space}, $f_{x_0}$ is a
smooth map as a restriction on its codomain of a smooth map. The
composition of smooth maps
$f_{x_0} \circ k'_{x_0} \: p_F^{-1}(b) \to p_F^{-1}(b)$ satisfies
$$
(x,y)\.G \mapsto \tau(x_0, x)\.y 
\mapsto (x_0, \tau(x_0,x)\.y)\.G = (x,y)\.G,
$$
which is the identity map; and the result follows.
\end{proof}

\begin{proposition} 
\label{pr:proper-discont-fiber-bundle}
The smooth fiber bundle $((X \x F)/G,p_F,B,F)$ with fiber $F$,
associated to the discrete%
\footnote{A covering principal bundle with a discrete and smooth
action.}
principal $G$-bundle $(X,p,B)$, is indeed a fiber bundle.
\end{proposition}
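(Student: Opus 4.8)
The plan is to exhibit explicit local trivializations of $p_F$ built from local sections of the principal bundle $(X,p,B)$. By Lemma~\ref{lm:covering-space-implies-covering-space} we already have a smooth structure on $E := (X\x F)/G$, locally diffeomorphic to $X\x F$ through the local diffeomorphism $\pi_\x \: X\x F \to E$, and the map $p_F$ is smooth; surjectivity of $p_F$ is immediate from that of~$p$. By Theorem~\ref{thm:covering-principal-is-principal-discrete-case} the bundle $(X,p,B)$ admits a coordinate representation $\{(U_i,\psi_i)\}$ with $\psi_i \: U_i\x G \to p^{-1}(U_i)$ satisfying \eqref{eq:principal-condition}, and the $U_i$ cover~$B$. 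I would set $s_i \: U_i \to X$, $s_i(b) := \psi_i(b,e)$, a smooth local section of~$p$, and define the candidate trivialization
\[
\Psi_i \: U_i \x F \to p_F^{-1}(U_i) : (b,y) \mapsto (s_i(b),y)\.G .
\]

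First I would record the elementary properties. Since $p_F\bigl((s_i(b),y)\.G\bigr) = p(s_i(b)) = b$, we have $p_F\circ\Psi_i = \pr_1$. For bijectivity, take a class $(x,y)\.G$ with $p(x) = b \in U_i$; as $p(x)=p(s_i(b))$ and the $G$-action on the fiber is free and transitive, there is a unique $g \in G$ with $x = s_i(b)\.g$, namely $g = \tau(s_i(b),x)$, and then $(x,y) = \bigl(s_i(b)\.g,\,g^{-1}\.(g\.y)\bigr) = (s_i(b),g\.y)\.g$, so $(x,y)\.G = \Psi_i(b,g\.y)$. This shows $\Psi_i$ is onto, and the representative $(b,g\.y)$ is determined by the class, so $\Psi_i$ is injective, with inverse
\[
\Psi_i^{-1} \: (x,y)\.G \mapsto \bigl(p(x),\ \tau(s_i(p(x)),x)\.y\bigr),
\]
which is independent of the chosen representative by the cocycle identity $\tau(s_i(b),x\.h)=\tau(s_i(b),x)\.h$.

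Next comes the smoothness, which is where the only real work lies. The map $\Psi_i = \pi_\x\circ(s_i\x 1_F)$ is smooth as a composition of smooth maps. For $\Psi_i^{-1}$ I would use that $\pi_\x$ is a local diffeomorphism and verify instead that $\Psi_i^{-1}\circ\pi_\x$ is smooth on $\set{(x,y) : p(x)\in U_i}$. Writing $b = p(x)$ and noting $s_i(b)\.g = \psi_i(b,g)$ by \eqref{eq:principal-condition}, the group element $g = \tau(s_i(b),x)$ equals the second component $\pr_2\circ\psi_i^{-1}(x)$; since $G$ is discrete this $g$ depends locally constantly on~$x$. Hence $(x,y)\mapsto (p(x),\,g\.y)$ is a composition of $p$, the locally constant (hence smooth) translation supplied by Corollary~\ref{cor:quotient-manifold-and-tau-smooth-2}, and the smooth action $G\x F\to F$, so it is smooth, and therefore $\Psi_i^{-1}$ is smooth. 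The main obstacle is precisely this identification of the $G$-component with $\pr_2\circ\psi_i^{-1}$ together with the observation that discreteness of~$G$ makes it locally constant, so that no continuity-of-$\tau$ difficulty can arise.

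Finally, each $\Psi_i$ is a diffeomorphism of $U_i\x F$ onto $p_F^{-1}(U_i)$ with $p_F\circ\Psi_i=\pr_1$, and the $U_i$ cover~$B$; by the definition of a smooth fiber bundle this exhibits $((X\x F)/G,p_F,B,F)$ as a fiber bundle. If one wishes to record compatibility, on overlaps $\Psi_i^{-1}\circ\Psi_j$ has the form $(b,y)\mapsto\bigl(b,\tau(s_i(b),s_j(b))\.y\bigr)$ with locally constant transition cocycle, but this is not needed for the fiber bundle property itself.
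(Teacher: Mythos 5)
Your proof is correct and follows essentially the same route as the paper's: both construct the local trivializations $\,(b,y)\mapsto (\sigma(b),y)\.G\,$ from a local section $\sigma$ of $(X,p,B)$ over a covering-space neighborhood (your $s_i=\psi_i(\cdot,e)$ is exactly the section $(p|_{U_x})^{-1}$ the paper uses), and both verify bijectivity via freeness and smoothness of the inverse via the translation map, which is locally constant because $G$ is discrete.
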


\begin{proof}
Consider the following diagrams; the one on the right shows open sets
in the corresponding spaces on the left. The aim is to show that there
is a local product structure on $((X \x F)/G,p_F,B,F)$, as follows:
\[
\xymatrix{\ar @{} [dr]
 X \ar@{<->}[r]^1 \ar[d]_{\pi} 
& X\ar[d]^p 
& X \x F \ar[l]_{\pr_1} \ar[r]^(.55){\pi_\x} \ar[d]_{p \x 1}  
& (X \x F)/G \ar[d]^{p_F}
\\
 X/G \ar[r]^f 
& B
& B \x F \ar[r]^(.55){\pr_1} 
& B,}
\quad
\xymatrix{\ar @{} [dr]
 U_x \ar@{<->}[r]^1 \ar[d]_{\pi} 
& U_x\ar[d]^p 
& U_{(x,y)} \ar[l]_{\pr_1} \ar[r]^(.55){\pi_\x} \ar[d]_{p \x 1}  
& W_x \ar[d]^{p_F}
\\
\pi(U_x) \ar[r]^f 
& V_{(x,y)}
& U_x \x F \ar[r]^(.55){\pr_1} 
& V_{(x,y)}.}
\]
Since the action of $G$ on $X \x F$ is a covering-space action, each
$(x,y)\in X \x F$ has an open neighborhood $U_{(x,y)}$ such that
$U_{(x,y)} \cap U_{(x,y)}\.g = \emptyset$ for all $g \neq e$. Since
the action of $G$ on $X$ is also a covering-space action, every
$x \in X$ has an open neighborhood $U_x$ in $X$ such that
$U_x \cap U_x\.g = \emptyset$ for all $g \neq e$. By the proof of
Lemma~\ref{lm:covering-space-implies-covering-space}, one can assume
that $U_x \x F = U_{(x,y)}$. Thus, both
$f \circ \pi|_{\pr_1(U_{(x,y)})} = p|_{\pr_1(U_{(x,y)})}$ and 
$\pi_\x|_{U_{(x,y)}}$ are injective.

Evidently, $W_{(x,y)} = \pi_\x(U_{(x,y)})$ is open in (the quotient
topology of) $(X \x F)/G$. Also, since $p \x 1 = (f \circ \pi) \x 1$
is an open map, it follows that
$V_{(x,y)} = \pr_1 \circ (p \x 1)(U_{(x,y)})$ is open in $B$, and
$B$ is covered by such open sets. Furthermore, if
$(x',y') \in U_{(x,y)}$ then $p_F(\pi_\x(x',y')) = p_F((x',y')\.G)
= p(x') = \pr_1(p \x1)(x',y')$, and so $p_F(W_{(x,y)}) = V_{(x,y)}$.
One seeks to define diffeomorphisms 
$\psi_{(x,y)} \: V_{(x,y)}  \x F \to p_F^{-1}(V_{(x,y)})$ such that 
$p_F \circ \psi_{(x,y)} = \pr_1$. For each $b \in V_{(x,y)}$ there is
a unique $x' \in U_x$ with $p(x') = b$. Define
$\psi_{(x,y)}(b,y') := (x',y')\.G$, for $y' \in F$. It follows that
$p_F \circ \psi_{(x,y)}(b,y') = p_F((x',y')\.G) = p(x') = b$. Assume 
that $(x',y')\.G = \psi_{(x,y)}(b,y') = \psi_{(x,y)}(b',y'')
= (x'',y'')\.G$; then $b = b'$ on applying $p_F$. Since
$(x',y') = (x''\.g,g^{-1}\.y'')$ for some $g \in G$, $x'$ and $x''$
belong to the same orbit. It follows from the uniqueness of~$x'$ that
$x' = x''$, so $g = e$ and thus $y' = y''$. As a result, each
$\psi_{(x,y)}$ is an injective map.

To verify that each $\psi_{(x,y)}$ is surjective, consider 
$(x'',y'')\.G \in p_F^{-1}(V_{(x,y)})$ and let 
$b = p_F((x'',y'')\.G) = p(x'') \in V_{(x,y)}$. There is a unique
$x' \in U_x$ such that $p(x') = b$, and $x' = x''\.g$ for some
$g \in G$. By taking $(b,g\.y'') \in V_{(x,y)} \x F$ we ensure that
$\psi_{(x,y)}(b,g\.y'') = (x',g\.y'')\.G = (x'\.g,y'')\.G
= (x'',y'')\.G$. From this calculation, one sees that
$\psi_{(x,y)}^{-1}$ is given by the relation
\begin{align*}
& (x'',y'')\.G \mapsto \bigl( b,\tau(x'',x')\.y'' \bigr) 
= \bigl( p(x''\.\tau(x'',x')),\tau(x'',x')\.y'' \bigr)
\\
&\quad = \bigl( 
p\bigl( x''\.\tau(x'',(p|_{U_x}^{-1})(p_F((x'',y'')\.G)) \bigr),
\tau\bigl( x'',(p|_{U_x}^{-1})(p_F((x'',y'')\.G)) \bigr)\.y'' \bigr),
\end{align*}
which makes it a smooth map.  Thus also, $\psi_{(x,y)}(b,y')
= (x',y')\.G = \bigl( (p|_{U_x})^{-1}(b),y' \bigr)\.G$, which shows
that $\psi_{(x,y)}$ is smooth, as required.
\end{proof}


\section{A Lie group acting on a smooth manifold} 
\label{sec:Husemoller-principal-bundles}

We now consider a Lie group $G$ acting on a smooth manifold $X$. In
this case the map $x \mapsto x\.g$ is smooth, for all $g \in G$, and
thus is a diffeomorphism. The aim is to characterize the actions
behind a principal bundle in this situation. The simplest case was
stated in Proposition~\ref{pr:action-of-G-on-H-is-proper-free}: the
action of a Lie group on a closed subgroup is proper and free.

Given that $X/G$ is not in general a manifold, nor even a Hausdorff
space, an extra request must be placed on the action itself. The first
item in this section is to review the Quotient Manifold Theorem, which
provides a manifold structure on~$X/G$. In
\cite[Thm.~1.11.4]{DuistermaatKolk}, it is established that the orbit
space of a proper and free $C^k$-action of a Lie group on a
$C^k$-manifold has a $C^k$-manifold structure. In
\cite[Thm.~21.10]{Lee} and in \cite[Thm.~5.119]{LeeJeffrey} it is
shown that if a Lie group $G$ acts smoothly, freely and properly on a
smooth manifold $X$, then the orbit space $X/G$ is a smooth manifold.
See also \cite[p.~136]{Greubetal}.

\begin{remark}
As pointed out in Section~\ref{sec:topological-bundles}, the
translation function is defined on the bundle $(X,\pi,X/G)$. Given any
other $G$-bundle $(X,p,B)$, each fiber $p^{-1}(p(x))$ through
$x \in X$ will equal the orbit $x\.G$. It is thereby possible to
define a translation map for the $G$-bundle $(X,p,B)$ acting on a pair
of points belonging to the same fiber. Once again, to study a
principal bundle from its so-defined translation map, it is enough to
consider only the case $(X,\pi,X/G)$.
\end{remark}

The first step is to show that on every principal bundle the action is
proper and free. All our base spaces, being smooth manifolds, are
Hausdorff, so we do not worry about the quasi situation as in
Section~\ref{sec:topological-bundles}.

\begin{proposition} 
\label{pr:principal-bundle-action-proper-free}
Let $(X,p,B,G)$ be a (smooth) principal $G$-bundle. The action of $G$
on~$X$ is proper and free.
\end{proposition}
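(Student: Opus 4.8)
The plan is to treat freeness and properness separately, in each case reducing to the local product structure and the principal condition~\eqref{eq:principal-condition}. Freeness repeats almost verbatim the argument already given for the topological case in Section~\ref{sec:topological-bundles}: if $x\.g = x$, set $b = p(x)$, pick a trivializing pair $(U,\psi)$ with $b \in U$, and write $x = \psi(b,h)$. Then $\psi(b,h) = x = x\.g = \psi(b,h)\.g = \psi(b,hg)$ by~\eqref{eq:principal-condition}, and injectivity of the diffeomorphism~$\psi$ forces $hg = h$, hence $g = e$.

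For properness I would verify condition~(iv) of Lemma~\ref{lm:equivalent-proper-actions}; since the manifolds are metrizable, the sequence formulation suffices. Let $x_n \to x$ in $X$ and let $(g_n) \subseteq G$ satisfy $x_n\.g_n \to x'$. The first key point is that both limits lie in a single fiber. The action preserves fibers---locally $\psi(b,h)\.g = \psi(b,hg)$ leaves the base coordinate untouched---so $p(x_n\.g_n) = p(x_n) \to p(x)$, and continuity of $p$ forces $p(x') = p(x) =: b$. I can therefore fix one trivializing pair $(U,\psi)$ with $b \in U$, and for all large~$n$ both $x_n$ and $x_n\.g_n$ lie in $p^{-1}(U)$.

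Within this chart the action becomes right translation in the group coordinate, which renders the convergence of $(g_n)$ transparent. Writing $x_n = \psi(p(x_n),h_n)$, continuity of $\psi^{-1}$ together with $x_n \to x = \psi(b,h)$ yields $h_n \to h$. By~\eqref{eq:principal-condition}, $x_n\.g_n = \psi(p(x_n),h_n g_n)$, so $\psi^{-1}(x_n\.g_n) = (p(x_n),h_n g_n) \to \psi^{-1}(x') = (b,h')$, whence $h_n g_n \to h'$. Because $G$ is a Lie group, multiplication and inversion are continuous, and therefore $g_n = h_n^{-1}(h_n g_n) \to h^{-1}h'$. Thus $(g_n)$ converges, in particular it has a convergent subsequence, which establishes~(iv) and hence properness.

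The step I would single out as the crux is the fiber-sharing observation $p(x') = p(x)$: it is what permits trivializing around a \emph{single} base point rather than juggling two charts, and once that is in place the principal condition collapses the entire action into right multiplication on the second factor, after which only continuity of the group operations is needed. Notably, no compactness argument on the base or on~$X$ is required, and the proof in fact shows that $(g_n)$ converges outright rather than merely along a subsequence.
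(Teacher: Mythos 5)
Your proof is correct and follows essentially the same route as the paper's: freeness is the identical local-triviality argument, and properness is obtained by trivializing around the common base point of the two limits, using the principal condition~\eqref{eq:principal-condition} to turn the action into right translation in the group coordinate, and invoking continuity of the group operations. The only (cosmetic) difference is that you verify condition (iv) of Lemma~\ref{lm:equivalent-proper-actions} directly, whereas the paper verifies condition (iii) by first passing to convergent subsequences of $(x_n)$ and $(y_n)$ inside the compact set~$K$ and then running the same chart computation.
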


\begin{proof}
Since the bundle is locally a product $U \x G$, the $G$ action is
free. Indeed, there is a coordinate representation $\{(U_i, \psi_i)\}$
and a right action of $G$ on $X$ such that for every
$z \in U_i \subseteq B$ and $g,h \in G$, 
$\psi_i(z,gh) = \psi_i (z,g)\.h$. Thus,
$\psi_i (z,g) = \psi_i (z,g)\.h = \psi_i(z,gh)$ implies $h = e$, since
$\psi_i$ is a diffeomorphism.
 
Let $K$ be a compact subset of $X$. As in the proof of
Proposition~\ref{pr:action-of-G-on-H-is-proper-free}, let $(g_n)$ be a
sequence in $G_K = \set{g\in G : (K\.g) \cap K \neq \emptyset}$. There
are two sequences $(x_n)$, $(y_n) \subset K$ with $x_n\.g_n = y_n$ for
all~$n$. Since $K$ is compact in $X$, by passing to appropriate
subsequences we can assume there are $x,y \in K$ with $x_n \to x$,
$y_n \to y$; thus, $p(x_n) = p(x_n\.g_n) = p(y_n)$ for all~$n$, so
$p(x) = p(y)$ and hence $x\.g = y$, for some $g \in G$. For $n$ large
enough, the points $x_n,y_n$ belong to a single chart domain $V$ at
$x\.g = y$, which is diffeomorphic to $U \x G$, with $U$ a chart
domain at $\pi(x) \in B$. Let us call this diffeomorphism
$\psi \: U \x G \to V$.

Observe that if $\psi(v_n,k_n) = y_n = x_n\.g_n = \psi(u_n,h_n)\.g_n
= \psi(u_n,h_ng_n)$, for some sequences $(u_n)$, $(v_n) \subset U$,
and $(k_n)$, $(h_n) \subset G$, then $v_n = u_n$ and
$k_n h_n^{-1} = g_n$, for all~$n$. Since
$(u_n,h_n) = \psi^{-1}(x_n)$ is a convergent sequence in $U \x G$,
there is some $h \in G$ with $h_n \to h$. Similarly, since
$(u_n,h_ng_n)$ converges in $U \x G$, there is some $k \in G$ with
$h_n g_n \to k$; and thus $g_n \to h^{-1}k$. Hence $G_K$ is compact;
properness of the action follows by
Lemma~\ref{lm:equivalent-proper-actions}.
\end{proof}

We shall see that these properties characterize principal bundles
over smooth manifolds.

\begin{remark}
In \cite[vol.~II, p.~135]{Greubetal}, an action is called
\textit{proper} if for all compact subsets $A, B \subset X$, the
subset $H = \set{g \in G : A \cap B g \neq \emptyset}$ of~$G$ is
compact (see Lemma~\ref{lm:equivalent-proper-actions}). In that
reference, for a proper \textit{smooth} action, the orbits are closed
submanifolds and $X/G$ is Hausdorff and locally compact. Furthermore,
an exercise (as a result attributed to Andrew M. Gleason) asks to show
that if the action is proper and free then $X/G$ possesses a unique
smooth structure for which the quotient map is a submersion. This
result is our Proposition~\ref{pr:orbits-proper-actions}.
\end{remark}

\begin{proposition} 
\label{pr:orbits-proper-actions}
Let $G$ be a Lie group acting properly (hence continuously) on a
smooth manifold $X$. For each $x \in X$, the orbit map
$\al_x \: G \to X : g \mapsto x\.g$ is a proper map and so the orbit
$x\.G$ is closed in $X$. If the action is moreover free and smooth,
then the orbit map is a smooth embedding, the orbit is an embedded
submanifold, and the inclusion $x\.G \hookto X$ is a proper map.
\end{proposition}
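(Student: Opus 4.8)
The plan is to get properness of $\al_x$ from properness of the action via Lemma~\ref{lm:proper-map}, and then, under freeness and smoothness, to upgrade $\al_x$ to a smooth embedding. For the first claim, since $G$ is Hausdorff and $X$ is locally compact, Lemma~\ref{lm:proper-map} reduces matters to showing that $\al_x^{-1}(K)$ is compact for each compact $K \subseteq X$. I would set $K' = K \cup \set{x}$ and apply Lemma~\ref{lm:equivalent-proper-actions}(iii), which makes $G_{K'} = \set{g \in G : (K'\.g) \cap K' \neq \emptyset}$ compact. The key inclusion is $\al_x^{-1}(K) \subseteq G_{K'}$: if $x\.g \in K$, then $x\.g$ lies in $(K'\.g) \cap K'$ because $x \in K'$ and $K \subseteq K'$, so $g \in G_{K'}$. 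As $\al_x^{-1}(K)$ is closed (the preimage of a closed set under a continuous map) and sits inside the compact set $G_{K'}$, it is compact. Hence $\al_x$ is proper; and since proper maps are closed, $x\.G = \al_x(G)$ is closed in~$X$.

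Now assume the action is free and smooth. Freeness gives injectivity of $\al_x$ at once, since $x\.g = x\.h$ forces $gh^{-1} = e$. The step I expect to require the most care is showing that $\al_x$ is an immersion. At the identity, $d(\al_x)_e$ carries $\xi \in T_eG$ to $\frac{d}{dt}\big|_{t=0}\, x\.\exp(t\xi)$, the value at $x$ of the fundamental vector field of $\xi$; as $t \mapsto x\.\exp(t\xi)$ is the integral curve of this field through~$x$, the vanishing of that value makes the curve constant, so $\exp(t\xi) \in G_x = \set{e}$ for all $t$ and hence $\xi = 0$. Thus $d(\al_x)_e$ is injective. To reach an arbitrary $g$, I would use the equivariance $\al_x \circ R_g = \rho_g \circ \al_x$, where $R_g \: h \mapsto hg$ is right translation on $G$ and $\rho_g \: X \to X : y \mapsto y\.g$ is a diffeomorphism. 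Differentiating at $e$ yields $d(\al_x)_g \circ d(R_g)_e = d(\rho_g)_x \circ d(\al_x)_e$; since $d(R_g)_e$ and $d(\rho_g)_x$ are isomorphisms and $d(\al_x)_e$ is injective, $d(\al_x)_g$ is injective for every~$g$.

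It remains to assemble the pieces. The map $\al_x$ is an injective immersion that is also proper, hence a homeomorphism onto its closed image, i.e., a smooth embedding; consequently $x\.G = \al_x(G)$ is an embedded submanifold of $X$. For properness of the inclusion $\iota \: x\.G \hookto X$, I would invoke Lemma~\ref{lm:proper-map} once more: $x\.G$ is Hausdorff and $X$ is locally compact, and for compact $K \subseteq X$ the preimage $\iota^{-1}(K) = K \cap (x\.G)$ is closed in $K$, because $x\.G$ is closed in $X$, and therefore compact.
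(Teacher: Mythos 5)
Your proof is correct, and it diverges from the paper's argument in two places worth noting. For properness of $\al_x$ you pass through Lemma~\ref{lm:equivalent-proper-actions}(iii), enlarging $K$ to $K' = K \cup \{x\}$ and trapping $\al_x^{-1}(K)$ as a closed subset of the compact set $G_{K'}$; the paper instead runs the sequential criterion (iv) directly on a sequence $(g_n)$ with $x\.g_n \in K$, using the constant sequence at $x$. Both are clean; yours avoids subsequence bookkeeping at the cost of the (easy) inclusion $\al_x^{-1}(K) \subseteq G_{K'}$. The more substantive difference is the immersion step: you prove injectivity of $d(\al_x)_e$ by identifying $d(\al_x)_e(\xi)$ with the fundamental vector field of $\xi$ at $x$ and using uniqueness of integral curves together with $G_x = \set{e}$, then propagate to all $g$ via the equivariance $\al_x \circ R_g = \rho_g \circ \al_x$. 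The paper never computes the kernel at $e$; it only uses that same equivariance to conclude that $\al_x$ has \emph{constant rank}, and then invokes the global rank theorem (an injective map of constant rank is an immersion) with injectivity supplied by freeness. Your route is more hands-on and makes visible exactly where freeness of the action enters at the infinitesimal level (via trivial stabilizers), whereas the paper's route is shorter but leans on the rank theorem as a black box. The final assembly (proper injective immersion $\Rightarrow$ embedding onto a closed, hence embedded, submanifold; properness of the inclusion from closedness of the orbit via Lemma~\ref{lm:proper-map}) matches the paper's conclusions.
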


\begin{proof}
Since the action $X \x G \to X$ is continuous, the orbit map $\al_x$
is continuous, too. By Lemma~\ref{lm:proper-map}, since $G$ is a Lie
group and $X$ a smooth manifold, to show that the orbit map is proper,
it is enough to verify that $K_x = \set{g\in G : x\.g \in K}$ is
compact in $G$, for every compact set $K \subseteq X$ and a given
$x \in X$. If $(g_n) \subset K_x$ is a sequence, then
$(x\.g_n) \subset K$ has a convergent subsequence, say
$x\.g_{n_k} \to y \in K$. By Lemma~\ref{lm:equivalent-proper-actions},
there is a new subsequence of $(g_{n_k})$ convergent in $G$, and hence
the orbit map is proper. Every proper map is a closed map, so each
orbit is closed in $X$. Now $\al_x$, being continuous and closed, is a
homeomorphism from $G$ to the orbit $x\.G$. As a result, the inclusion
$x\.G \hookto X$ is a proper map.

Now assume that the action is free and smooth. Because it is free,
$\al_x$ is injective. For a given $x \in X$, the orbit map $\al_x$ can
be written as a composition
$G \approx \{x\} \x G \hookto X \x G \to X$, where the first map is a
diffeomorphism and the last is the action itself. As a result, $\al_x$
is a smooth map, for every $x \in X$. Because the action is smooth,
each map $\al^g \: x \mapsto x\.g$ is a diffeomorphism on~$X$. Since
$\al_x \circ \mu_h = \al^h \circ \al_x$, with $\mu_h(g) = gh$, it
follows that $(d\al_x)_{gh} (d\mu_h)_g = (d\al^h)_{x\.g} (d\al_x)_{g}$
for every $g \in G$, and so the orbit map is of constant rank since
$(d\mu_h)_g$ and $(d\al^h)_{x\.g}$ are isomorphisms. Thus each $\al_x$
is a smooth immersion and a homeomorphism as well, i.e, it is a smooth
embedding; and the result follows.
\end{proof}

The second part of the next result states a smoothness property of the
translation map, that somehow replaces its continuity from the
topological case. One would like to consider translation functions
$\tau \: X \x_G X \to G$ with some sort of smoothness, which amounts
to providing $X \x_G X$ with an appropriate smooth manifold structure.
As suggested in Corollary~\ref{cor:diffeomorphic-orbits}, an
alternative approach is to consider the map $\tau_x \: x\.G \to G$
given by $x' = x\.g \mapsto \tau_x(x') := \tau(x,x') = g$. An
advantage is that by Proposition~\ref{pr:orbits-proper-actions} the
orbits are embedded submanifolds diffeomorphic to $G$.

\begin{corollary} 
\label{cor:diffeomorphic-orbits}
Let $G$ be a Lie group acting properly, freely and smoothly on a
smooth manifold $X$. Each orbit in $X$ is diffeomorphic to $G$ and the
restriction of the translation map to each orbit,
$\tau_x \: x\.G \to G : x\.g \mapsto \tau(x,x\.g) = g$, is smooth.
\end{corollary}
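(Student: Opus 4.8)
The plan is to read off both assertions directly from Proposition~\ref{pr:orbits-proper-actions}, after observing that the restricted translation map $\tau_x$ is nothing but the inverse of the orbit map $\al_x$.

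First I would apply Proposition~\ref{pr:orbits-proper-actions}: since $G$ acts properly, freely, and smoothly on $X$, the orbit map $\al_x \: G \to X : g \mapsto x\.g$ is a smooth embedding whose image $x\.G$ is an embedded submanifold of~$X$. A smooth embedding is, by definition, a diffeomorphism onto its image once the image is given its submanifold structure; hence $\al_x \: G \to x\.G$ is a diffeomorphism. This is exactly the first claim, that each orbit is diffeomorphic to~$G$.

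For the second claim I would simply identify $\tau_x$ with $\al_x^{-1}$. For $x\.g \in x\.G$ one has $\al_x(g) = x\.g$ and $\tau_x(x\.g) = g$, so $\tau_x \circ \al_x = 1_G$ and $\al_x \circ \tau_x = 1_{x\.G}$; freeness guarantees that $\al_x$ is injective, so $\tau_x$ is well defined on all of $x\.G$. Since the inverse of a diffeomorphism is again a diffeomorphism, and in particular smooth, $\tau_x = \al_x^{-1}$ is smooth.

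There is no substantial obstacle here, as the heavy lifting was carried out in Proposition~\ref{pr:orbits-proper-actions}; the only point deserving care is the recognition that $\tau_x$ is precisely the inverse orbit map, for which the embedded-submanifold structure on the orbit is what makes $\al_x$ a diffeomorphism rather than merely a smooth bijective immersion. Once $\tau_x$ is known to be smooth, the ``fiber-wise'' interpretation mentioned in the introduction follows at once: the map $(x\.G) \x (x\.G) \to G : (x',x'') \mapsto \tau(x',x'') = \tau_x(x')^{-1}\,\tau_x(x'')$ is smooth, being the composition of $\tau_x \x \tau_x$ with the smooth map $(a,b) \mapsto a^{-1}b$ on~$G$.
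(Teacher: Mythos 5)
Your proposal is correct and follows essentially the same route as the paper: both arguments invoke Proposition~\ref{pr:orbits-proper-actions} to see that the orbit map $\al_x \: G \to x\.G$ is a diffeomorphism onto the embedded submanifold $x\.G$, and then identify $\tau_x$ with $\al_x^{-1}$, whose smoothness is automatic. Your closing remark on the fiber-wise map $(x',x'') \mapsto \tau_x(x')^{-1}\tau_x(x'')$ is a pleasant bonus consistent with the remark following Lemma~\ref{lm:proper-action-characterization}, but it is not needed for the statement itself.
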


\begin{proof}
To see that each orbit is diffeomorphic to $G$, it suffices to
consider $\al_x \: G \to x\.G$ which is a surjective immersion of
constant rank. Its inverse map
$\al_x^{-1} \: x\.G \to G : x\.g \mapsto g$ is smooth and is precisely
the restriction $\tau_x$ of the translation map to the orbit $x\.G$.
\end{proof}

From Proposition~\ref{pr:principal-bundle-action-proper-free}, the
smoothness of $\tau_x$ holds for any (smooth) principal
$G$-bundle $(X,p,B,G)$, through the appropriate identifications given
by a diffeomorphism $f \: X/G \to B$.

\medskip

Next, we want to verify that the quotient $X/G$ is actually a smooth
manifold.

If $G$ is a $k$-dimensional Lie group acting on a $n$-dimensional
smooth manifold $X$, a chart $(U,\vf)$ centered at $x \in X$ is
\textit{adapted to the action of~$G$} if $\vf(U) = A \x B$ where
$A \subseteq \bR^k$ and $B \subseteq \bR^{n-k}$ are open subsets; with
coordinate functions $(x^1,\dots,x^k,y^1,\dots,y^{n-k})$ such that
each $G$-orbit is either disjoint from~$U$, or has intersection
with~$U$ being a single slice of the form
$(y^{k+1},\dots,y^{n-k}) = (c^{k+1},\dots,c^n)$, for some
constants~$c^i$.

\begin{remark}
By shrinking the sets $A$ and $B$, they can be taken as open cubes in
$\bR^k$ and $\bR^{n-k}$. Also, by taking
$\vec{c} = (c^{k+1},\dots,c^n) \in B$ fixed,
$\vf^{-1}(A \x \vec{c}) = U \cap \sO_{\vf^{-1}(\vec{0},\vec{c})}$ is
the intersection of $U$ with the orbit through the point
$\vf^{-1}(\vec{0},\vec{c})$ -- or through any other point
$\vf^{-1}(\vec{a},\vec{c})$, if $\vec{a} \in A$.
\end{remark}

The following result quotes both the second part of the proof of
\cite[Thm.~21.10]{Lee} and \cite[Thm.~5.115]{LeeJeffrey}. There is
also another version in the proof of
\cite[Thm.~1.11.4]{DuistermaatKolk}. Here we switch back to left
actions during the proof of the following four results.

\begin{lemma} 
\label{lm:adapted-charts}
Let $G$ be a $k$-dimensional Lie group acting smoothly, freely and
properly on an $n$-dimensional smooth manifold $X$. Then at each
$x \in X$ there is a chart $(U,\vf)$ adapted to the action of~$G$.
\end{lemma}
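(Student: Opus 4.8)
The plan is to construct the adapted chart from a \emph{slice} through~$x$ transverse to the orbit, using properness to ensure that each orbit meets this slice at most once. First I would record, via Proposition~\ref{pr:orbits-proper-actions} (with the left action now in force), that the orbit map $\al_x \: G \to X : g \mapsto g\.x$ is a smooth embedding and that $\sO_x = G\.x$ is an embedded $k$-dimensional submanifold of~$X$. Because $G\.x$ is embedded, a slice chart for it yields an embedded $(n-k)$-dimensional submanifold $S \subseteq X$ with $x \in S$ and $T_x X = T_x(G\.x) \op T_x S$; thus $S$ is transverse to the orbit at~$x$.

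Next I would study the smooth map $\th \: G \x S \to X : (g,s) \mapsto g\.s$. At $(e,x)$ its differential restricts to $d(\al_x)_e$ on $T_e G$ (an isomorphism onto $T_x(G\.x)$) and to the inclusion $T_x S \hookto T_x X$ on $T_x S$. By the transversality arranged above, the image of $d\th_{(e,x)}$ is $T_x(G\.x) \op T_x S = T_x X$, so $d\th_{(e,x)} \: T_e G \op T_x S \to T_x X$ is an isomorphism. The inverse function theorem then provides a neighborhood $N$ of $e$ in $G$ and (after shrinking~$S$) a diffeomorphism $\th \: N \x S \to U := \th(N \x S)$ onto an open neighborhood~$U$ of~$x$.

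The main obstacle, and the single place where properness is indispensable, is to shrink $S$ once more so that each orbit meets it in at most one point. Suppose this fails for every shrinking: then there are sequences $s_i \neq s'_i$ in $S$ with $s_i, s'_i \to x$ and $g_i \in G$, necessarily $g_i \neq e$ by freeness, with $g_i\.s_i = s'_i$. As $(s_i) \to x$ and $(g_i\.s_i) = (s'_i) \to x$, Lemma~\ref{lm:equivalent-proper-actions}(iv) yields a subsequence $g_{i_j} \to g$; passing to the limit gives $g\.x = x$, whence $g = e$ by freeness. For large~$j$ we then have $g_{i_j} \in N$, and $\th(g_{i_j}, s_{i_j}) = s'_{i_j} = \th(e, s'_{i_j})$ with both arguments in $N \x S$; injectivity of $\th$ there forces $g_{i_j} = e$ and $s_{i_j} = s'_{i_j}$, contradicting $g_i \neq e$. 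Hence, after this final shrinking of~$S$, each orbit meets $S$ in at most one point.

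Finally I would assemble the chart. Choosing coordinates on $N$ centered at~$e$ (with image $A \subseteq \bR^k$) and on $S$ centered at~$x$ (with image $B \subseteq \bR^{n-k}$), and transporting them through $\th^{-1}$, I obtain a chart $\vf \: U \to A \x B$ with coordinate functions $(x^1,\dots,x^k,y^1,\dots,y^{n-k})$, the $x^i$ coming from $G$ and the $y^j$ from~$S$. If an orbit $\sO$ is disjoint from~$S$, then it is disjoint from $U$, since any $g\.s \in U \cap \sO$ would give $s = g^{-1}\.(g\.s) \in S \cap \sO$. If instead $\sO \cap S = \{s\}$, then $\sO \cap U = \set{g\.s : g \in N}$, which in the coordinates above is exactly the slice on which the transverse coordinates $y^1,\dots,y^{n-k}$ are frozen at the values belonging to~$s$. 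Thus every orbit meets $U$ either in the empty set or in a single slice, which is precisely the adapted-chart condition.
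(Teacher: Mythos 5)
Your proposal is correct and follows essentially the same route as the paper: embed the orbit via Proposition~\ref{pr:orbits-proper-actions}, choose a transverse slice $S$ with $T_x X = T_x(G\.x) \oplus T_x S$, apply the inverse function theorem to $(g,s) \mapsto g\.s$ on $G \x S$, and then use properness (via the sequential criterion of Lemma~\ref{lm:equivalent-proper-actions}) together with freeness to shrink $S$ so that each orbit meets it at most once. Your closing verification that each orbit meets $U$ in at most a single slice is in fact spelled out a bit more explicitly than in the paper's proof.
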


\begin{proof}
By Proposition~\ref{pr:orbits-proper-actions}, each orbit is an
embedded submanifold of $X$. Hence, there is a chart $(U',\vf')$ of
$X$, centered at~$x$ and such that $G\.x \cap U'$ has the form
$y^{k+1} =\cdots= y^n = 0$. Let $S$ be the $(n-k)$-dimensional
embedded submanifold (with a unique chart) defined as
$\set{u \in U' : x^1(u) =\cdots= x^k(u) = 0}$ with coordinate chart
$$
S = \vf'^{-1}(\vf'(U') \cap (0^k \x \bR^{n-k}))
\to \pi_{n-k}(\vf'(S)) \subseteq \bR^{n-k}
$$ 
and coordinates $(y^{k+1},\dots,y^n)$, the last $(n-k)$ coordinates
of~$\vf'$. Thus, $T_x X = T_x(G\.x) \oplus T_x S$.

Let $\al|_S \: G \x S \to X$ be the restriction of the action of~$G$,
$i_x \: G \hookto G \x S : g \mapsto (g,x)$ and
$j_e \: S \hookto G \x S : s \mapsto (e,s)$ be the two inclusions,
which are smooth embeddings. With $\iota \: S \hookto X$ the inclusion
map, we see that $\al_x = \al|_S \circ i_x$ and
$\al|_S \circ j_e = \iota$, and so
$(d\al_x)_g = (d\al|_S)_{(g,x)} \circ (di_x)_g$ and
$(d\al|_S)_{(e,s)} \circ (dj_e)_s = (d\iota)_s$. Note that
$(d\al_x)_e(T_e G) = T_x(G\.x)$ since $\al_x$ is an embedding,
implying that the image of $(d\al|_S)_{(e,x)}$ must include
$T_x (G\.x)$. Similarly, the image of $(d\al|_S)_{(e,x)}$ must include
$T_x S$, and so $(d\al|_S)_{(e,x)} \: T_{(e,x)}(G \x S) \to T_x X$ is
surjective. Because $\dim(T_{(e,x)}(G \x S)) = \dim(T_x X)$, it
follows that $(d\al|_S)_{(e,x)}$ is bijective.

By the Inverse Function Theorem, there is an open neighborhood of
$(e,x)$, that can be taken small enough to satisfy three properties:
\begin{itemize}
\item
it is of the form $A' \x B'$ with $A'$ open in $G$ and $B'$ open
in~$S$;
\item
$\al|_S(A' \x B') \subseteq U'$; and 
\item 
$(\al|_S)|_{A'\x B'}$ is a diffeomorphism. 
\end{itemize}
Consider diffeomorphisms $a \: (-1,1)^k = I^k \to A'$ and 
$b \: I^{n-k} \to B'$ with $a^{-1}(e) = 0$, $b^{-1}(x) = 0$. The
inverse of the diffeomorphism
$\al|_S \circ (a \x b) \: I^k \x I^{n-k} \to \al|_S(A' \x B')$ defines
a chart with domain $\al|_S (A' \x B')$. 

Now $B'$ can be taken small enough so that its intersection with each
orbit is empty or a single point: otherwise, then there would be a
sequence of open sets $(B'_n)$ with compact closures such that
$\ovl{B'_{n+1}} \subseteq B'_n$, each with two different points
$x_n,x'_n\in B'_n$ satisfying $g_n\.x_n = x'_n$ for some $g_n \in G$.
The sequences $(g_n\.x_n)$ and $(x'_n)$ both converge to~$x$; by
Proposition~\ref{lm:equivalent-proper-actions} and since the action is
proper, there is a subsequence $(g_{n_k})$ convergent to some
$g \in G$. In this way $g\.x \gets g_{n_k}\.x_{n_k} = x'_{n_k} \to x$,
and so $g = e$ because the action is free. This would imply that
$g_{n_k} \in A$ for $k$ large enough, and so
$\al|_S(g_{n_k},x_{n_k}) = \al^{g_{n_k}}(x_{n_k}) = x'_{n_k}
= \al^e(x_{n_k}) = \al|_S(e,x'_{n_k})$, contradicting the injectivity
of~$\al|_S$.

Consider the map $\vf = (\al|_S \circ (a  \x b))^{-1}
\: U = \al|_S(A' \x B') \to I^k \x I^{n-k}$ with coordinate functions
$\vf = (u,v)$ taking values in $I^k \x I^{n-k}$. For any constant
vector $v = \vec{c} \in I^{n-k}$, there is a slice of the form
$\al|_S(A' \x \{y\}) \subseteq G\.y$, for $y \in B'$, which is
contained in a single orbit. Because of the way the set $B'$ was
chosen, the nonempty intersection of an orbit with~$U$ must be a
single slice. Therefore, the chart $(U,\vf)$ satisfies the required
adaptedness to the action of~$G$.
\end{proof}

The next result is known as the \textit{Quotient Manifold Theorem}.
There are several proofs in the literature. The one presented here
follows parts of \cite[Thm.~5.119]{LeeJeffrey},
\cite[Thm.~21.10]{Lee}, and \cite[Thm.~1.11.4]{DuistermaatKolk}.

\begin{theorem} 
\label{thm:QM}
Let $X$ be a $n$-dimensional smooth manifold and $G$ a $k$-dimensional
Lie group with a smooth, free and proper action over $X$. Then $X/G$,
with the quotient topology, carries a unique smooth structure, with
$\dim X/G = \dim X - \dim G$, for which the canonical projection
$\pi \: X \to X/G$ is a submersion.
\end{theorem}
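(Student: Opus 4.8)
The plan is to establish the topological properties of $X/G$ first, then to manufacture a smooth atlas from the adapted charts of Lemma~\ref{lm:adapted-charts}, and lastly to verify the submersion and uniqueness assertions. For the topology, recall from the proof of Lemma~\ref{lm:DuKo} that $\pi \: X \to X/G$ is an open map, since for open $U \subseteq X$ the saturation $\pi^{-1}(\pi(U)) = \bigcup_{g\in G} U\.g$ is a union of open sets. An open continuous surjection carries a countable base of $X$ to a countable base of $X/G$, so $X/G$ is second countable; and since the action is free and proper, Lemma~\ref{lm:proper-action-characterization} together with Lemma~\ref{lm:DuKo} shows that $X \x_G X$ is closed in $X \x X$ and hence that $X/G$ is Hausdorff.

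Next I would construct the charts. Fix $x \in X$ and an adapted chart $(U,\vf)$ from Lemma~\ref{lm:adapted-charts}, with $\vf(U) = A \x B$, $A \subseteq \bR^k$, $B \subseteq \bR^{n-k}$ open, and coordinates $(x^1,\dots,x^k,y^1,\dots,y^{n-k})$ for which each orbit meets $U$ in at most one slice on which the last $n-k$ coordinates are constant. Let $\pr_2 \: A \x B \to B$ denote the projection onto those last coordinates. Because two points of $U$ lie in the same orbit exactly when they share their $y$-coordinates, the composite $\pr_2 \circ \vf$ is constant on the fibres of $\pi|_U$ and descends to a bijection $\bar\vf \: V := \pi(U) \to B$ with $\bar\vf \circ \pi|_U = \pr_2 \circ \vf$. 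Here $V$ is open because $\pi$ is open, and $\pi|_U \: U \to V$ is an open continuous surjection, hence a quotient map; thus $\bar\vf$ is continuous, with continuous inverse $c \mapsto \pi(\vf^{-1}(0,c))$, so $(V,\bar\vf)$ is a homeomorphism onto $B \subseteq \bR^{n-k}$. These charts cover $X/G$.

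The heart of the argument, and the step I expect to be the main obstacle, is the smoothness of the transition maps, because two adapted charts may select representatives of the same orbit that differ by a group element varying with the point. To handle this I would introduce the smooth local section $s \: B \to U \subseteq X$, $s(c) = \vf^{-1}(0,c)$, so that $\bar\vf^{-1} = \pi \circ s$, and show that for a second quotient chart $(V',\bar\vf')$ the map $q' := \bar\vf' \circ \pi \: \pi^{-1}(V') \to B'$ is smooth as a map on the \emph{total space}. Indeed, given $z \in \pi^{-1}(V')$ choose $g_0 \in G$ with $z\.g_0 \in U'$; since $\pi(w) = \pi(w\.g_0)$ one has $q'(w) = \pr_2'(\vf'(w\.g_0))$ for every $w$ in the neighbourhood of $z$ on which $w\.g_0 \in U'$, and this is smooth because the action and $\vf'$ are smooth. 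On the overlap the transition map is then $\bar\vf' \circ \bar\vf^{-1} = q' \circ s$, a composite of smooth maps, hence smooth; its inverse is smooth by symmetry. This yields a smooth atlas of dimension $\dim B = n-k$.

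Finally, in the chart $(U,\vf) \mapsto (V,\bar\vf)$ the projection is represented by $\bar\vf \circ \pi \circ \vf^{-1} = \pr_2 \: A \x B \to B$, which is a submersion, so $\pi$ is a submersion everywhere. For uniqueness I would invoke that a surjective submersion is a smooth quotient map: if two smooth structures on the underlying set $X/G$ both make $\pi$ a submersion, the identity map between them commutes with $\pi$, and the characteristic property of surjective submersions forces that identity and its inverse to be smooth, so the two structures coincide.
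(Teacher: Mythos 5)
Your proof is correct and follows essentially the same route as the paper: the same topological preliminaries (openness of $\pi$, second countability, Hausdorffness via Lemma~\ref{lm:proper-action-characterization}), quotient charts of the form $\pr_2 \circ \vf \circ (\pi|_W)^{-1}$ built from the adapted charts of Lemma~\ref{lm:adapted-charts}, the submersion property read off as $\pr_2$ in those coordinates, and uniqueness deduced from the submersion property of the two projections. The only (harmless) variation is in verifying chart compatibility: the paper reduces to two adapted charts centered at the same point via the diffeomorphism $\al^g$ and observes that the $\vec{y}'$-coordinates depend only on $\vec{y}$, whereas you show directly that the composite of $\pi$ with the second quotient chart is smooth on the total space and then precompose with the smooth section $c \mapsto \vf^{-1}(0,c)$; both arguments are valid.
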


\begin{proof}
We first deal with the uniqueness. If we assume there are two smooth
structures $\sA$ and $\sA'$ on $X/G$ such that the respective
canonical projections $\pi$ and $\pi'$ are submersions, then
$\pi = 1 \circ \pi'$, where $1$ is the identity map
$(X/G, \sA') \to (X/G, \sA)$. For any $x \in X$, we can find local
charts $(U,\phi)$ centered at~$x$ and $(V,\psi)$ centered at $\pi(x)$
such that $\psi \circ \pi \circ \phi^{-1}$ has the form
$\bR^k \x \bR^{n-k} \ni (u,v) \mapsto v$. The maps $1$ and
$1 \circ \pi \circ \phi^{-1} \circ \iota_0 \circ \psi$ coincide on
an open set, possibly by shrinking $U$ and~$V$, showing that $1$ is
smooth. Here $\iota_0(v) = (0,v) \: \bR^{n-k} \to \bR^k \x \bR^{n-k}$.
The same result holds similarly for its inverse, making $1$ a
diffeomorphism. We conclude that the smooth structure is unique.

From Lemma~\ref{lm:proper-action-characterization}, $X/G$ is a
Hausdorff space. It is also second countable: if $\{U_i\}$ is a
countable basis for the topology of~$X$, then $\{\pi(U_i)\}$ is a
countable basis for that of~$X/G$.

Next we show that $X/G$ is locally Euclidean. Let $z \in X/G$ and
choose $x$ with $\pi(x) = z$ and $(U,\vf)$ an action-adapted chart
centered at $x$ as in Lemma~\ref{lm:adapted-charts}, with
$\vf(U) = A \x B$ and coordinate functions
$(x^1,\dots,x^k,y^{k+1},\dots,y^n)$. We further assume that $A$ and
$B$ are open cubes in $\bR^k$ and $\bR^{n-k}$, respectively. Let
$V = \pi(U)$ and consider $W = \{x^1 =\cdots= x^k = 0\} \subseteq U$.
Since $(U,\vf)$ is an adapted chart, $\pi|_W \: W \to V$ is a
bijection. Even further, if $W' \subseteq W$ is an open subset, then
$\pi(W') = \pi(\set{(x,y) : (0,y)\in W'})$ is also open in~$X/G$.
Thus, $\pi|_W \: W \to V$ is a homeomorphism. Consider
$\sg = (\pi|_W)^{-1} \: V \to W$. Once more, because $(U,\vf)$ is
adapted, there is a well-defined map
$\bt \: V \to B : G\.(x,y) \mapsto y$. Actually,
$\bt = \pr_2 \circ \vf \circ \sg$, where $\pr_2 \: A \x B \to B$ is
the second projection. Since $\sg$ and $\pr_2 \circ \vf \: W \to B$
are homeomorphisms, so too is~$\bt$.

To finish the proof, we must exhibit a smooth structure on $X/G$ for
which $\pi$ is a submersion. We consider the atlas $\{(V,\bt)\}$ as
constructed above; since $\pi(x,y) = y$ for such a chart, it follows
that $\pi$ is a submersion.

Let $(U,\vf)$ and $(U',\vf')$ be two adapted charts for~$X$ and
$(V,\bt)$, $(V',\bt')$ the corresponding charts for~$X/G$. First, let
us assume both adapted charts are centered at the same point of~$X$,
with respective coordinates $(\vec{x},\vec{y})$ and
$(\vec{x}',\vec{y}')$. Because these charts are adapted, two points
of~$X$ with the same $\vec{y}$-coordinates lie in the same orbit and
so must have the same $\vec{y}'$-coordinates. In this way, the
transition maps $\bt' \circ \bt^{-1}$ can be written as
$(\vec{x}',\vec{y}') = (f(\vec{x},\vec{y}), h(\vec{y}))$, where $f,h$
are smooth maps defined on neighborhoods of the origin. Therefore,
$\bt' \circ \bt^{-1}$ is smooth.

For the general case, consider two adapted charts $(U,\vf)$ and
$(U',\vf')$ centered at $x$ and~$x'$, respectively, with
$\pi(x) = \pi(x')$, i.e., they belong to the same orbit: $g\.x = x'$
for some $g \in G$. The diffeomorphism $\al^g \: X \to X$ sends orbits
to orbits, so $\bar\vf = \vf' \circ \al^g$ is another adapted chart
centered at~$x$. With $\bar\sg = (\al^g)^{-1} \circ \sg'$ (the local
section for~$\vf'$) we obtain
$$
\bar\bt = \pr_2 \circ \bar\vf \circ \bar\sg 
= \pi_2 \circ \vf' \circ \al^g \circ (\al^g)^{-1} \circ \sg'
= \pi_2 \circ \vf' \circ \sg' = \bt'.
$$
Thus we are brought back to the situation of two adapted charts
centered at the same point of~$X$. As a result, $X/G$ is a smooth
manifold.
\end{proof}

It follows that if the action of $G$ on $X$ is free and proper, then
$\dim G \leq \dim X$.

\medskip

The next result is the converse to
Proposition~\ref{pr:principal-bundle-action-proper-free} on the
characterization of principal bundles through the properties of the
action. As observed in \cite[p.~55]{DuistermaatKolk}, ``\dots\ having
a proper and free action of $G$ on~$M$ is equivalent to saying that
$M$ is a principal fiber bundle with structure group~$G$.''

\begin{theorem} 
\label{thm:characterization}
Let $X$ be a smooth manifold and $G$ a Lie group with a smooth, free
and proper action over $X$. Then $(X,\pi,X/G,G)$ is a smooth principal
$G$-bundle: every $G\.x \in X/G$ has an open neighborhood $V$ and a
diffeomorphism $\psi \: \pi^{-1}(V) \to G \x V
: x' \mapsto \bigl( \chi(x'),\pi(x') \bigr)$ such that
$\psi(g\.x') = \bigl( \chi(x')g, \pi(x') \bigr)$ for $g \in G$.
\end{theorem}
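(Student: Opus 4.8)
The plan is to fix an arbitrary class $G\.x_0\in X/G$ and produce a $G$-equivariant local trivialization of $\pi$ over a neighbourhood of it. By Theorem~\ref{thm:QM} the quotient $X/G$ already carries its (unique) smooth structure making $\pi\:X\to X/G$ a submersion, so the only thing left to manufacture is the local product structure together with its intertwining property. The engine for this is a smooth local section. Choosing an adapted chart $(U,\vf)$ at $x_0$ as in Lemma~\ref{lm:adapted-charts}, with $\vf(U)=A\x B$ and slice $W=\{x^1=\cdots=x^k=0\}\subseteq U$, the restriction $\pi|_W\:W\to V:=\pi(U)$ is a bijection onto an open neighbourhood of $G\.x_0$; in the adapted coordinates it reads $(\vec 0,\vec y)\mapsto\vec y$, so $\pi|_W$ is a diffeomorphism and $\sigma:=(\pi|_W)^{-1}\:V\to X$ is a smooth section of $\pi$ over $V$ meeting the given orbit.

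Second, I would let the group sweep out the section, defining $\Phi\:G\x V\to\pi^{-1}(V):(g,v)\mapsto g\.\sigma(v)$. This is smooth, being $\mathrm{id}_G\x\sigma$ followed by the action, and it is a bijection: $\pi(\Phi(g,v))=\pi(\sigma(v))=v$ recovers the second coordinate, while for fixed $v$ freeness of the action shows every point of the fibre over $v$ is hit exactly once, since $\sigma(v)$ lies in the same orbit as any $x'\in\pi^{-1}(v)$. The substantive step is to promote this smooth bijection to a diffeomorphism. As $\dim(G\x V)=k+(n-k)=\dim X$, it is enough to check that $d\Phi$ is everywhere an isomorphism, and here I exploit equivariance: writing $\la_h\:x'\mapsto h\.x'$ and $L_h$ for left translation on $G$, the relation $\Phi\circ(L_h\x\mathrm{id})=\la_h\circ\Phi$ shows $d\Phi_{(g,v)}$ is an isomorphism iff $d\Phi_{(e,v)}$ is. At $(e,v)$ the block on $T_vV$ is $d\sigma_v$, whose image is $T_{\sigma(v)}W$, and the block on $T_eG$ is the differential of the orbit map at $\sigma(v)$, whose image is $T_{\sigma(v)}(G\.\sigma(v))$ and which is injective by Proposition~\ref{pr:orbits-proper-actions}. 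By the adapted-chart structure the orbit directions are the $x$-coordinates and the slice directions the $y$-coordinates, so $T_{\sigma(v)}X=T_{\sigma(v)}(G\.\sigma(v))\op T_{\sigma(v)}W$; the two images are complementary and $d\Phi_{(e,v)}$ is an isomorphism. Being a bijective local diffeomorphism, $\Phi$ is a diffeomorphism.

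Finally I would set $\psi:=\Phi^{-1}\:\pi^{-1}(V)\to G\x V$, a diffeomorphism of the desired shape $x'\mapsto(\chi(x'),\pi(x'))$, where $\chi(x')$ is the unique group element with $x'=\chi(x')\.\sigma(\pi(x'))$ (uniqueness by freeness). The intertwining relation is then a one-line substitution: since $\pi(g\.x')=\pi(x')$ and $g\.x'=(g\,\chi(x'))\.\sigma(\pi(x'))$, one reads off that $\psi$ carries the $G$-action on $X$ to multiplication on the $G$-factor, which is exactly the equivariance asserted in the statement and in \eqref{eq:principal-condition} (the side on which the product appears being fixed by the left/right convention adopted for the action). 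I expect the main obstacle to be precisely the smoothness of the inverse $\psi$, equivalently of the group coordinate $\chi$: the translation map is known to be smooth only along each orbit (Corollary~\ref{cor:diffeomorphic-orbits}) and not, a priori, transversally, so continuity of $\tau$ as in the topological case would not suffice. It is the adapted chart---ultimately properness and freeness---that furnishes the transverse slice $W$ and the tangent splitting needed to upgrade $\Phi$ to a diffeomorphism, and thereby to make $\chi$ smooth.
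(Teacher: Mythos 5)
Your construction is the same as the paper's: take an adapted chart at a point of the orbit (Lemma~\ref{lm:adapted-charts}), use the slice $W=\{x^1=\cdots=x^k=0\}$ to build a smooth local section $\sigma=(\pi|_W)^{-1}\:V\to X$, and sweep it out by the group to get the smooth bijection $(g,v)\mapsto g\.\sigma(v)$ from $G\x V$ onto $\pi^{-1}(V)$; surjectivity, injectivity via freeness, and the final equivariance computation all match. Where you genuinely diverge is the one step that carries the real weight, the smoothness of the inverse $\psi$ (equivalently of the group coordinate $\chi$). The paper obtains it by writing $\chi(x')$ in terms of the translation map, $\chi(x')=\tau\bigl(\vf^{-1}(\vec 0,\bt(\pi(x'))),x'\bigr)$, and invoking Corollary~\ref{cor:diffeomorphic-orbits}; as you correctly flag, that corollary only gives smoothness of $\tau$ \emph{along} each orbit, while here both arguments of $\tau$ vary and the section point $\sigma(\pi(x'))$ moves transversally to the orbits, so the paper's appeal is not self-evidently sufficient as stated. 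You instead apply the inverse function theorem: equivariance $\Phi\circ(L_h\x 1)=\la_h\circ\Phi$ reduces everything to the points $(e,v)$, where $d\Phi_{(e,v)}$ splits into the injective differential of the orbit map (Proposition~\ref{pr:orbits-proper-actions}) with image $T_{\sigma(v)}(G\.\sigma(v))$ and $d\sigma_v$ with image $T_{\sigma(v)}W$, and the adapted chart makes these complementary, so $\Phi$ is a bijective local diffeomorphism and hence a diffeomorphism. This buys a self-contained and airtight justification of precisely the point the paper glosses over, at the modest cost of a tangent-space computation; it also localizes the role of properness and freeness exactly where they are used (existence of the slice and the splitting $T_xX=T_x(G\.x)\op T_xW$). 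The only cosmetic discrepancy is the side on which $g$ multiplies $\chi(x')$ in the intertwining relation, which, as you note, is a matter of the left/right convention and is already slightly inconsistent between the paper's statement and its own proof.
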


\begin{proof}
Let $x \in X$ be fixed. We use the notation from the previous proof,
taking as $V$ the same neighborhood in $X/G$, recalling that the
sets$A$ and $B$ can be taken as open cubes and that, for any
$\vec{c} \in B$, $\vf^{-1}(A \x \vec{c})$ is the intersection of $U$
with the orbit through any point $\vf^{-1}(\vec{a},\vec{c})$. The
following diagram, where top and bottom maps are local charts, is
commutative:
\[
\xymatrix{
U \ar[r]^(.4){\vf} \ar[d]_\pi  & A\x B \ar[d]^{\pr_2} 
\\
V \ar[r]^\bt &  B. 
}
\]

Consider the smooth map $\psi^{-1} \: G \x V \to \pi^{-1}(V) 
: (g,\pi(x')) \mapsto g\.\vf^{-1}\bigl(\vec{0}, \bt(\pi(x'))\bigr)$.
For any $x \in \pi^{-1}(V)$ and any $g \in G$, there holds
$$
\pi\bigl( g\.\vf^{-1}\bigl( \vec{0}, \bt(\pi(x')) \bigr) \bigr)
= \pi \circ \vf^{-1}\bigl( \vec{0}, \bt(\pi(x')) \bigr)
= \bt^{-1} \circ \pr_2\bigl( \vec{0}, \bt(\pi(x')) \bigr)
= \pi(x') \in V.
$$
Hence, there exists $g' = g'_{g,x'} \in G$, depending on $g$ and~$x'$,
such that $g'g\.\vf^{-1}\bigl( \vec{0}, \bt(\pi(x')) \bigr) = x'$.
Thus $\psi^{-1}$ is surjective. Also, if
$g\.\vf^{-1}\bigl( \vec{0}, \bt(\pi(x')) \bigr) 
= h\.\vf^{-1}\bigl( \vec{0}, \bt(\pi(x'')) \bigr)$, then
$g'g\.x' = h'h\.x''$ for some $h'\in G$. Therefore
$\pi(x') = \pi(x'')$ and hence $g = h$, ensuring that $\psi^{-1}$ is a
smooth bijection.

To finish, we must show smoothness of 
$\psi \: \pi^{-1}(V) \to G \x V$. Taking $g = e$ one gets
$x' = g'_{e,x'}\.\vf^{-1}\bigl( \vec{0}, \bt(\pi(x')) \bigr)$, whereby
$(g'_{e,x'})^{-1}
= \tau\bigl( x', \vf^{-1}\bigl( \vec{0}, \bt(\pi(x')) \bigr) \bigr)$.
By Corollary~\ref{cor:diffeomorphic-orbits} the restriction of 
$\tau_{x'}$ is smooth, as are the inversion and
multiplication maps on~$G$. Thus,
$\psi(x') = \bigl( \tau\bigl( x', \vf^{-1}\bigl(
\vec{0}, \bt(\pi(x')) \bigr) \bigr)^{-1}, \pi(x') \bigr)$ is a smooth
map. The result follows by taking 
$\chi(x') := \tau\bigl(\vf^{-1}(\vec{0}, \bt(\pi(x')), x'\bigr)$.
\end{proof}

\begin{lemma} 
\label{lm:diffeomorphic-fibers}
In any smooth principal fiber $G$-bundle $(X,p,B)$, each fiber is
diffeomorphic to~$G$.
\end{lemma}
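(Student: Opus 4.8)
The plan is to use local triviality directly and then reconcile the outcome with the orbit/translation-map picture of Corollary~\ref{cor:diffeomorphic-orbits}. First I would fix $b \in B$ and pick a trivializing pair $(U,\psi)$ from the coordinate representation with $b \in U$, so that $\psi \: U \x G \to p^{-1}(U)$ is a diffeomorphism with $p \circ \psi = \pr_1$. From $p \circ \psi = \pr_1$ it is immediate that $p^{-1}(b) = \psi(\{b\} \x G)$. Since $\{b\} \x G$ is an embedded submanifold of $U \x G$ diffeomorphic to $G$ under $g \mapsto (b,g)$, restricting $\psi$ to it gives a diffeomorphism onto the embedded submanifold $p^{-1}(b)$; composing, $g \mapsto \psi(b,g)$ is the desired diffeomorphism from $G$ onto $p^{-1}(b)$.

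The step needing care --- and the one I expect to be the main obstacle --- is checking that the smooth structure on $p^{-1}(b)$ used above (as the diffeomorphic image $\psi(\{b\}\x G)$) coincides with the structure $p^{-1}(b)$ carries as a $G$-orbit, so that the conclusion meshes with the rest of the section. This is where Proposition~\ref{pr:principal-bundle-action-proper-free} and condition~\eqref{eq:principal-condition} enter. Putting $x := \psi(b,e)$, condition~\eqref{eq:principal-condition} yields $\psi(b,g) = \psi(b,e)\.g = x\.g$, so the trivialization $g \mapsto \psi(b,g)$ is exactly the orbit map $\al_x$ and $p^{-1}(b) = x\.G$. By Proposition~\ref{pr:principal-bundle-action-proper-free} the action is smooth, free and proper, so Proposition~\ref{pr:orbits-proper-actions} shows that $x\.G$ is an embedded submanifold and that $\al_x$ is a smooth embedding; because an embedded submanifold structure on a fixed subset is unique, the fiber structure and the orbit structure agree.

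With this identification in hand, the diffeomorphism from $G$ onto $p^{-1}(b)$ is precisely the orbit map $\al_x$ of Corollary~\ref{cor:diffeomorphic-orbits}, whose inverse is the restricted translation map $\tau_x \: x\.G \to G : x\.g \mapsto g$. Everything beyond the consistency-of-structures step is a routine restriction of a diffeomorphism; indeed, should one prefer to bypass the uniqueness argument entirely, the first paragraph already establishes the lemma, the remaining discussion serving only to link the fiberwise diffeomorphism to the translation map.
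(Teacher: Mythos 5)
Your proof is correct, but it takes a more direct route than the paper. You read the diffeomorphism $G \to p^{-1}(b)$ straight off a local trivialization $\psi \: U \x G \to p^{-1}(U)$, restricting the diffeomorphism $\psi$ to the embedded slice $\{b\} \x G$; condition~\eqref{eq:principal-condition} then identifies this restriction with the orbit map $\al_x$ for $x = \psi(b,e)$, and uniqueness of embedded-submanifold structures reconciles the fiber structure with the orbit structure of Proposition~\ref{pr:orbits-proper-actions}. The paper instead routes the argument through the quotient: it upgrades the homeomorphism $f \: X/G \to B$ of Proposition~\ref{pr:existence-of-f} to a diffeomorphism (using Theorem~\ref{thm:QM} to put a smooth structure on $X/G$, Theorem~\ref{thm:characterization} for the local trivialization of $\pi$, and a rank computation on the differentials), and only then transfers Corollary~\ref{cor:diffeomorphic-orbits} --- each orbit is diffeomorphic to $G$ --- across $f$ to the fibers of $p$. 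Your version needs less machinery for the bare statement (no Quotient Manifold Theorem, no smoothness of $f$), and your second paragraph still recovers the link to $\al_x$ and $\tau_x$ that the section emphasizes; what the paper's longer route buys is the additional fact, of independent interest, that the bundle isomorphism $(1,f) \: (X,\pi,X/G) \to (X,p,B)$ is itself smooth, i.e., that a smooth principal fiber $G$-bundle is a smooth $G$-bundle in the sense of Definition~\ref{def:free-covering-G-space}. Both arguments ultimately rest on Proposition~\ref{pr:principal-bundle-action-proper-free} and the identification of fibers with orbits, so the difference is one of organization rather than substance.
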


\begin{proof}
In Proposition~\ref{pr:existence-of-f}, it was shown that for any
principal fiber $G$-bundle $(X,p,B,G)$, there is a homeomorphism
$f \: X/G \to B : G\.x \mapsto p(x)$ making it a topological
$G$-bundle. In the setting of a principal action, Theorem~\ref{thm:QM}
says that $X/G$ is a manifold and Theorem~\ref{thm:characterization}
shows that each $G\.x \in X/G$ has an open neighborhood $V$ in $X/G$
with a diffeomorphism $\psi \: \pi^{-1}(V) \to G \x V$ for which
$f \circ \pr_2 = p \circ \psi^{-1} \: G \x V \to p(\pi^{-1}(V))$. 
Hence, for all $g \in G$,
$f(x\.G) = f \circ \pr_2 (g,G\.x) = p(\psi^{-1}(g,G\.x))$; so that $f$
is a smooth map.
\[
\xymatrix{\ar @{} [dr]
G  \x V \ar[d]_{\pr_2} \ar[r]^{\psi^{-1}}
& \pi^{-1}(V) \ar[d]^{\pi|} \ar[r]^1 & \pi^{-1}(V) \ar[d]^{p|} &
\\
V \ar[r]^1   & V \ar[r]^(.35){f} & p(\pi^{-1}(V)).
}
\]

Since $df_{G\.x} \circ (d\pr_2)_{(g,G\.x)}
= dp_{\psi^{-1}(g,G\.x)} \circ d\psi^{-1}_{(g,G\.x)}$ where
$d\psi^{-1}_{(g,G\.x)}$ is an isomorphism and
$dp_{\psi^{-1}(g,G\.x)}$ and $(d\pr_2)_{(g,G\.x)}$ are epimorphisms,
it follows that $df_{G\.x}$ is an epimorphism from $T_{G\.x} V$ to
$T_{p(x)}p(\pi^{-1}(V))$, that have the same dimension. As a result,
$f$~is a diffeomorphism, with $f(G\.x) = p^{-1}(x)$. From
Proposition~\ref{pr:principal-bundle-action-proper-free} and
Corollary~\ref{cor:diffeomorphic-orbits}, the result follows.
\end{proof}

We return now to right actions of $G$ on~$X$. For the next results, it
is enough to recall the definition of an associated fiber bundle
$((X \x F)/G,p_F,B,F)$, the $G$-structure being given by the relation
$(x,y)\.g = (x\.g,g^{-1}\.y)$, and
$p_F \: (X \x F)/G \to B : (x,y)\.G \mapsto p(x)$.

\begin{lemma} 
\label{lm:free-action-on-XxF}
If the action of $G$ on $X$ is free then the action of $G$ on $X \x F$
is free, and so the translation map
$\tau_ \x \: (X \x F) \x_G (X \x F) \to G$ is well defined.
\qed
\end{lemma}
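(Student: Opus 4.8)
The plan is to reduce freeness of the product action to freeness of the action on $X$, since the defining formula $(x,y)\.g = (x\.g, g^{-1}\.y)$ couples the group element to the first coordinate in an invertible way. First I would suppose that $(x,y)\.g = (x,y)$ for some $(x,y) \in X \x F$ and some $g \in G$. Reading off the first coordinate gives $x\.g = x$, and freeness of the action of $G$ on $X$ forces $g = e$. Hence the only group element fixing a point of $X \x F$ is the identity, which is exactly freeness of the product action. Note that the action of $G$ on~$F$ plays no role here; it is the first factor alone that certifies freeness.

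Having established freeness, the well-definedness of $\tau_\x$ is immediate from Lemma~\ref{lm:free-action}: freeness of the product action is equivalent to the injectivity (hence bijectivity) of the associated canonical map $(X \x F) \x G \to (X \x F) \x_G (X \x F)$. Injectivity says precisely that whenever two points $(x,y)$ and $(x',y')$ lie in the same $G$-orbit, there is a \emph{unique} $g \in G$ with $(x,y)\.g = (x',y')$; assigning this $g$ to the pair is the definition of the translation map $\tau_\x$, so no ambiguity can arise.

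There is no real obstacle here: the statement is a formal consequence of the shape of the product action together with the already-proved equivalence between freeness and injectivity of the canonical map. The only point worth recording is that freeness is inherited from the $X$-factor regardless of whether the $G$-action on~$F$ is free, which is exactly why the associated-bundle construction $((X \x F)/G, p_F, B, F)$ makes sense for an arbitrary left $G$-space~$F$.
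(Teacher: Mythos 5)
Your argument is correct and is precisely the one the paper intends: the lemma is stated with no written proof (marked as immediate), and the evident reading-off of the first coordinate, $x\.g = x \Rightarrow g = e$, combined with Lemma~\ref{lm:free-action} for the well-definedness of $\tau_\x$, is exactly the omitted justification. Your remark that freeness of the $G$-action on $F$ is irrelevant is a correct and worthwhile observation.
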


Since 
\begin{align*}
\MoveEqLeft{(X \x F) \x_G (X \x F) 
= \set{((x,y), (x,y)\.g) : x \in X, \ y \in F, \ g \in G}}
\\
&= \set{((x,y), (x\.g,g^{-1}\.y)) : x \in X, \ y \in F, \ g \in G} 
\subset (X \x F) \x (X \x F),
\end{align*}
the map $\tau_\x$ has the factorization $(X \x F) \x_G (X  \x F) 
\stackrel{\pr_{13}}{\longrightarrow} X \x_G X \stackrel{\tau}{\to} G$,
where the $\pr_{13}$ is the restriction to $(X \x F) \x_G(X \x F)$ of
the projection from $(X \x F) \x (X \x F)$ to the first and third
factors. The following lemma is immediate.

\begin{lemma} 
\label{lm:assoc-transl-cont}
If the translation map $\tau \: X \x_G X \to G$ is continuous, then
the translation map $\tau_\x \: (X \x F) \x_G (X \x F) \to G$ is also
continuous.
\qed
\end{lemma}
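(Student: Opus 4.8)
The plan is to exploit the factorization displayed immediately above the statement, namely $\tau_\x = \tau \circ \pr_{13}$, and to conclude that $\tau_\x$ is continuous as a composition of two continuous maps. So the whole argument reduces to continuity of $\pr_{13}$ together with verification of this factorization, after which the hypothesis on $\tau$ finishes the job.

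First I would check that $\pr_{13} \: (X \x F) \x_G (X \x F) \to X \x_G X$ is continuous. The coordinate projection $(X \x F) \x (X \x F) \to X \x X$ onto the first and third factors, $\bigl((x,y),(x',y')\bigr) \mapsto (x,x')$, is continuous, being a projection of a product. Since both $(X \x F) \x_G (X \x F)$ and $X \x_G X$ carry the relative topologies inherited from $(X \x F) \x (X \x F)$ and $X \x X$ respectively, the map $\pr_{13}$ is simply this projection restricted on domain and codomain, hence continuous. Here I must also confirm that it genuinely lands in $X \x_G X$: a point of $(X \x F) \x_G (X \x F)$ has the form $\bigl((x,y),(x\.g,g^{-1}\.y)\bigr)$, and its image $(x,x\.g)$ indeed lies in $X \x_G X$, since $x$ and $x\.g$ share an orbit.

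Next I would confirm the identity $\tau_\x = \tau \circ \pr_{13}$ by unwinding both sides on such a point. By definition $g$ is the unique group element carrying $(x,y)$ to $(x\.g,g^{-1}\.y)$ under the diagonal action $(x,y)\.g = (x\.g,g^{-1}\.y)$, so $\tau_\x\bigl((x,y),(x\.g,g^{-1}\.y)\bigr) = g$; on the other hand $\tau\bigl(\pr_{13}\bigl((x,y),(x\.g,g^{-1}\.y)\bigr)\bigr) = \tau(x,x\.g) = g$ as well. Thus the two maps agree everywhere, and since $\tau$ is continuous by hypothesis and $\pr_{13}$ is continuous, the composite $\tau_\x$ is continuous.

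There is no real obstacle here; the only point demanding any attention is the bookkeeping in the previous paragraph, namely checking that $\pr_{13}$ maps into $X \x_G X$ and that composing with $\tau$ recovers the correct group element. Once that is settled the conclusion is immediate, which is precisely why the statement can be recorded without a separate proof.
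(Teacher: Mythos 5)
Your argument is correct and is exactly the paper's: the displayed factorization $\tau_\x = \tau \circ \pr_{13}$ preceding the lemma is the whole proof, with $\pr_{13}$ continuous as the restriction of a coordinate projection to subspaces carrying the relative topology. You have merely made explicit the bookkeeping that the paper leaves implicit when it records the lemma with no separate proof.
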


By Lemma~\ref{lm:proper-action-characterization}, the presence of a
continuous, free and proper action guarantees that the translation map
is continuous, so we can state the following.

\begin{proposition} 
\label{pr:F-homeomorphic-fibers}
Let $(X,p,B)$ be a smooth principal fiber $G$-bundle with associated
fiber bundle $((X \x F)/G,p_F,B,F)$. For each $b \in B$, the fiber $F$
is homeomorphic to $p_F^{-1}(b)$.
\end{proposition}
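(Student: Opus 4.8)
The plan is to follow the discrete-case argument of Corollary~\ref{cor:diff-fiber-covering-space} almost verbatim, replacing smoothness by continuity and diffeomorphisms by homeomorphisms. By Proposition~\ref{pr:principal-bundle-action-proper-free} the action of $G$ on $X$ is free and proper, whence Lemma~\ref{lm:proper-action-characterization} makes the translation map $\tau\colon X\x_G X\to G$ continuous; this is the only non-formal input I would use. Fix $b\in B$ and pick $x_0\in X$ with $p(x_0)=b$. Since the bundle is principal, the fiber $p^{-1}(b)$ is precisely the orbit $x_0\.G$, and
\[
p_F^{-1}(b)=\set{(x,y)\.G : x\in p^{-1}(b),\ y\in F}.
\]

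First I would set up the two candidate maps. Let $f_{x_0}\colon F\to p_F^{-1}(b)$ be $y\mapsto(x_0,y)\.G$; it is continuous as the composite $F\to X\x F\stackrel{\pi_\x}{\to}(X\x F)/G$, $y\mapsto(x_0,y)\mapsto(x_0,y)\.G$, with codomain restricted to the subspace $p_F^{-1}(b)$. In the other direction, define $k_{x_0}\colon p^{-1}(b)\x F\to F$ by $(x,y)\mapsto\tau(x_0,x)\.y$. This is continuous because $\tau(x_0,\cdot)$ is the restriction of $\tau$ to the orbit $x_0\.G$ and the action $G\x F\to F$ is continuous. From $\tau(x_0,x\.g)=\tau(x_0,x)g$ one gets $k_{x_0}(x\.g,g^{-1}\.y)=k_{x_0}(x,y)$, so $k_{x_0}$ is $G$-invariant and descends set-theoretically to a bijection $k'_{x_0}\colon p_F^{-1}(b)\to F$, $(x,y)\.G\mapsto\tau(x_0,x)\.y$. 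The same computation as in Corollary~\ref{cor:diff-fiber-covering-space} shows $f_{x_0}$ and $k'_{x_0}$ are mutually inverse, so the whole proposition reduces to the continuity of $k'_{x_0}$.

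The step I expect to be the crux is promoting the continuity of $k_{x_0}$ to that of the induced map $k'_{x_0}$. For this I would show that $\pi_\x\colon X\x F\to(X\x F)/G$ restricts to a quotient map onto $p_F^{-1}(b)$. Two observations make this work: the orbit projection $\pi_\x$ is an \emph{open} map, since $\pi_\x^{-1}(\pi_\x(O))=\bigcup_{g\in G}O\.g$ is open whenever $O$ is (each $g$ acting as a homeomorphism); and $p^{-1}(b)\x F=\pi_\x^{-1}(p_F^{-1}(b))$ is saturated, because $p(x\.g)=p(x)$ keeps the first coordinate in the fiber over $b$. Since the restriction of an open continuous surjection to the preimage of any subspace is again an (open) quotient map, $\pi_\x|_{p^{-1}(b)\x F}\colon p^{-1}(b)\x F\to p_F^{-1}(b)$ is a quotient map. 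As $k_{x_0}$ is continuous and constant on its fibers, $k'_{x_0}$ is continuous.

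Putting these together, $f_{x_0}$ is a continuous bijection whose inverse $k'_{x_0}$ is continuous, hence a homeomorphism $F\approx p_F^{-1}(b)$. Equivalently, one could feed in the continuity of $\tau_\x$ from Lemma~\ref{lm:assoc-transl-cont} in place of that of $\tau$, but the single-$\tau$ route above seems the most economical.
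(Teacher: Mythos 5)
Your proof is correct and takes essentially the same route as the paper's: the same pair of maps (your $f_{x_0}$ is the paper's $\rho_{x_0}$, your $k'_{x_0}$ its $k$), with continuity of $\tau$ supplied by Proposition~\ref{pr:principal-bundle-action-proper-free} and Lemma~\ref{lm:proper-action-characterization}. Your explicit justification that $\pi_\x$ restricted to the saturated preimage $p^{-1}(b)\x F$ is a quotient map (via openness of the orbit projection) is a detail the paper leaves implicit when it says ``by factoring $k_1$ through the restriction of the quotient map.''
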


\begin{proof}
Let $p(x_0) = b$ for some $x_0 \in X$, and let
$\rho_{x_0} \: F \to (X \x F)/G : y \mapsto (x_0,y)\.G$, which is
continuous. Since $p_F((x_0,y)\.G) = p(x_0) = b$, one can view
$\rho_{x_0} \: F \to p_F^{-1}(b)$ by restricting its range from
$(X \x F)/G$ to $p_F^{-1}(b)$.

Consider the continuous map $k_1 \: p^{-1}(b) \x F \to F
: (x,y) \mapsto \tau(x_0,x)\.y$. Clearly,
$k_1(x\.g, g^{-1}\.y) = k_1(x,y)$. By factoring $k_1$ through
the restriction of the quotient map $\pi_\x \: X \x F \to (X \x F)/G$,
we get a continuous map $k \: p_F^{-1}(b) \to F$. By construction,
$\rho_{x_0}$ and $k$ are inverse to each other.
\end{proof}

Next we must ensure that the quotient space $(X \x F)/G$ has a smooth
manifold structure. This is offered by the next lemma; and
furthermore, Theorem~\ref{thm:QM} has the consequence that
$\dim((X \x F)/G)) = \dim X + \dim F - \dim G = \dim(X/G \x F)$.

It is tempting to consider a possible diffeomorphism between
$(X \x F)/G$ and $X/G \x F$. The obvious map
$\ka \: (x,y)\.G \mapsto (x\.G,y)$ is well defined if and only if
$(x\.G,y) = \ka((x,y)\.G)) = \ka((x\.g, g^{-1}\.y)\.G)
= (x\.G, g^{-1}\.y)$, if and only if $g\.y = y$ for all~$y$ and~$g$;
which means that the action of $G$ on~$F$ is trivial.
Nevertheless, since $(X,p,B)$ is a smooth $G$-bundle, there is a
diffeomorphism $f \: B \to X/G$ and so 
$f \circ p_F \: (X \x F)/G \to B \to X/G$ is a smooth surjective map.

\begin{lemma} 
\label{lm:GonX-prin-GonXF-prin}
If the action of $G$ on $X$ is smooth and principal, then the action
of $G$ on $X \x F$ is principal. Thus, the quotient space
$(X \x F)/G$ has a smooth manifold structure with
$\dim(X \x F)/G = \dim(X/G \x F)$; and both
$\pi_\x \: X \x F \to (X \x F)/G$ and
$p_F \: (X \x F)/G \to B : (x,y)\.G \mapsto p(x)$ are smooth maps.
\end{lemma}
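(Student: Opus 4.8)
The plan is to show that the action $(x,y)\.g = (x\.g,\,g^{-1}\.y)$ of $G$ on $X \x F$ is smooth, free and proper, and then to invoke the Quotient Manifold Theorem (Theorem~\ref{thm:QM}) for the structural assertions; the smoothness of $p_F$ will follow by descending a $G$-invariant smooth map through the submersion $\pi_\x$.

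First I would record that the action on $X \x F$ is smooth, being assembled from the smooth action of $G$ on~$X$, the smooth inversion $g \mapsto g^{-1}$ in the Lie group~$G$, and the smooth action of $G$ on~$F$ (here $F$ is understood to be a smooth manifold with smooth $G$-action, as is implicit in forming the associated bundle). Freeness is immediate from Lemma~\ref{lm:free-action-on-XxF}. The only substantive point is properness, and here the guiding observation is that the first-factor projection $\pr_1 \: X \x F \to X$ is $G$-equivariant, $\pr_1((x,y)\.g) = x\.g$, so properness ought to be inherited from the base factor alone. Concretely, I would invoke criterion~(iv) of Lemma~\ref{lm:equivalent-proper-actions}, applicable since $X \x F$ is a (metrizable) smooth manifold and $G$ is a Lie group: given a convergent sequence $((x_n,y_n))$ in $X \x F$ and a sequence $(g_n)$ in~$G$ for which $((x_n,y_n)\.g_n) = ((x_n\.g_n,\,g_n^{-1}\.y_n))$ also converges, projecting to the first factor shows that both $(x_n)$ and $(x_n\.g_n)$ converge in~$X$. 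Properness of the action on~$X$, via criterion~(iv) for that action, then delivers a convergent subsequence of~$(g_n)$, which is exactly what is required. Hence the action on $X \x F$ is principal. (Notice that properness of the $F$-action is not even needed.)

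With smoothness, freeness and properness in hand, Theorem~\ref{thm:QM} immediately yields a unique smooth structure on $(X \x F)/G$ for which $\pi_\x \: X \x F \to (X \x F)/G$ is a submersion, hence smooth, and gives $\dim (X \x F)/G = \dim(X \x F) - \dim G = \dim X + \dim F - \dim G$. Since the same theorem furnishes $\dim(X/G) = \dim X - \dim G$, this equals $\dim(X/G) + \dim F = \dim(X/G \x F)$, settling the dimension count.

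Finally, for the smoothness of $p_F$ I would use that $p \circ \pr_1 \: X \x F \to B$, $(x,y) \mapsto p(x)$, is smooth and $G$-invariant, since $p(x\.g) = p(x)$ (the fibers of $p$ are the orbits); it therefore factors as $p_F \circ \pi_\x = p \circ \pr_1$. As $\pi_\x$ is a surjective submersion, the characteristic property of submersions forces $p_F$ to be smooth. I expect no genuine obstacle: the properness step is the only one carrying content, and it reduces cleanly to the already-established properness on~$X$ through the equivariant projection $\pr_1$; everything else is a direct appeal to Theorem~\ref{thm:QM} and to the descent of maps through a surjective submersion.
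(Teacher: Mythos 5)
Your proposal is correct and follows essentially the same route as the paper: freeness via Lemma~\ref{lm:free-action-on-XxF}, properness by projecting the convergent sequences to the first factor and applying the sequential criterion of Lemma~\ref{lm:equivalent-proper-actions} to the action on~$X$, and then Theorem~\ref{thm:QM} for the quotient structure and dimension count. Your explicit descent argument for the smoothness of $p_F$ through the surjective submersion $\pi_\x$ fills in a detail the paper leaves under ``the result follows from Theorem~\ref{thm:QM},'' but it is the intended argument.
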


\begin{proof}
From Lemma~\ref{lm:free-action-on-XxF}, the action of $G$ on $X \x F$
is free. To see that it is proper, let $(x_n,y_n)$ be a convergent
subsequence in $X \x F$ and $(g_n)$ a sequence in $G$ such that
$(x_n\.g_n, g_n^{-1}\.y_n)$ also converges in $X \x F$. Then
$(x_n\.g_n)$ is convergent in~$X$, and from
Lemma~\ref{lm:equivalent-proper-actions} there is a convergent
subsequence of $(g_n)$, making the action on $X \x F$ proper as well.
The result follows from Theorem~\ref{thm:QM}.
\end{proof}

Recall that for a smooth fiber bundle $(X,p,B,G)$ with fiber a
Lie group $G$, the map $p \: E \to B$ is smooth and each point of
$B$ has a neighborhood $U$ and a diffeomorphism
$\psi \: U \x F \to p^{-1}(U)$ such that
$p \circ \psi = \pr_1 \: U \x F \to U$.

Following Proposition~\ref{pr:sets-as-bundles}, the maps
$\psi_{ij} = \psi_i^{-1} \circ \psi_j
\: (U_i \cap U_j) \x G \to (U_i \cap U_j) \x G$ are diffeomorphisms.
Thus, the map $(b,g) \mapsto (b,h)$, where 
$\sg_j(b)\.g = \sg_i(b)\.h$, is a diffeomorphism.

Lemma~\ref{lm:GonX-prin-GonXF-prin} shows that an associated bundle
$((X \x F)/G,p_F,B,F)$ to a smooth principal $G$-bundle $(X,p,B)$ is
indeed a smooth bundle. Next -- see \cite[Proposition~I, p.~198,
vol.~II]{Greubetal} -- we establish that it has a local product
structure making it a smooth fiber bundle, similarly to
Proposition~\ref{pr:proper-discont-fiber-bundle}.

\begin{theorem} 
\label{thm:associated-bundle}
If the action of $G$ on $X$ is smooth and principal, then there is a
unique smooth structure on $(X \x F)/G$ such that:
\begin{enumerate}[noitemsep, label={\textup{(\roman*)}}]
\item 
$((X \x F)/G,p_F,B,F)$ is a smooth fiber bundle;
\item 
$\pi_\x \: X \x F \to (X \x F)/G$ is a smooth fiber-preserving map,
restricting to diffeomorphisms $\pi_{\x\,x} \: F \to p_F^{-1}(p(x))$
on each fiber; and
\item 
$(X \x F,\pi_\x,(X \x F)/G,G)$ is a smooth principal bundle.
\end{enumerate}
\end{theorem}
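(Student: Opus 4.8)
The plan is to derive all three assertions from the principal action of $G$ on $X \x F$, most of the groundwork being already in place. By Lemma~\ref{lm:GonX-prin-GonXF-prin} the diagonal action $(x,y)\.g = (x\.g,g^{-1}\.y)$ is smooth and principal, so Theorem~\ref{thm:QM} furnishes the unique smooth structure on $(X \x F)/G$ for which $\pi_\x$ is a submersion; this is the structure I would work with throughout. Assertion~(iii) is then immediate: applying Theorem~\ref{thm:characterization} to the smooth, free and proper action of $G$ on $X \x F$ shows that $(X \x F,\pi_\x,(X \x F)/G,G)$ is a smooth principal $G$-bundle. Uniqueness of the smooth structure follows at once, since any structure satisfying~(iii) makes $\pi_\x$ the projection of a principal bundle, hence a submersion, and thus by the uniqueness clause of Theorem~\ref{thm:QM} it must coincide with the one just fixed.

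The core of the argument is the local product structure of~(i). Identifying $B$ with $X/G$ through the diffeomorphism $f \: B \to X/G$ available here, I would use that $(X,\pi,X/G,G)$ is a smooth principal bundle by Theorem~\ref{thm:characterization}: over a suitable open $V \subseteq X/G$ there is a smooth local section $s \: V \to X$ and a smooth $G$-equivariant map $\chi \: \pi^{-1}(V) \to G$ with $\chi(s(b)) = e$, $\chi(x\.g) = \chi(x)g$ and $x = s(\pi(x))\.\chi(x)$. I then define $\Psi \: V \x F \to p_F^{-1}(V)$ by $\Psi(b,y) := (s(b),y)\.G = \pi_\x(s(b),y)$. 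Clearly $p_F \circ \Psi = \pr_1$, and $\Psi$ is smooth as the composite of $s \x 1_F$ with $\pi_\x$. Freeness of the action on $X$ gives injectivity, and every class in $p_F^{-1}(V)$ equals $\Psi(\pi(x),\chi(x)\.y)$ because $(x,y) = (s(\pi(x)),\chi(x)\.y)\.\chi(x)$; hence $\Psi$ is bijective with $\Psi^{-1}((x,y)\.G) = (\pi(x),\chi(x)\.y)$.

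The step I expect to be the main obstacle is the smoothness of $\Psi^{-1}$, which I would settle by descent rather than by manipulating the full translation map. The map $\pi^{-1}(V) \x F \to V \x F$, $(x,y) \mapsto (\pi(x),\chi(x)\.y)$, is smooth, being built from $\pi$, $\chi$ and the smooth action on $F$, and it is $G$-invariant since $(x\.g,g^{-1}\.y) \mapsto (\pi(x),(\chi(x)g)\.(g^{-1}\.y)) = (\pi(x),\chi(x)\.y)$. As $\pi_\x$ is a surjective submersion, this invariant map factors through a unique smooth map on $p_F^{-1}(V)$, which is precisely $\Psi^{-1}$. Therefore each $\Psi$ is a diffeomorphism; combined with the fact that $p_F$ is smooth and surjective (Lemma~\ref{lm:GonX-prin-GonXF-prin}), this proves that $((X \x F)/G,p_F,B,F)$ is a smooth fiber bundle, establishing~(i).

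Finally, for~(ii) the map $\pi_\x$ is smooth and fiber-preserving because $p_F \circ \pi_\x = p \circ \pr_1$. Its restriction $\pi_{\x\,x} \: F \to p_F^{-1}(p(x))$, $y \mapsto (x,y)\.G$, is already a homeomorphism by Proposition~\ref{pr:F-homeomorphic-fibers}; to upgrade this to a diffeomorphism I would observe that for $b = \pi(x)$ the fibrewise restriction of the trivialization $\Psi$ is exactly $\pi_{\x\,s(b)}$, hence a diffeomorphism, while for general $x = s(b)\.g$ one has $\pi_{\x\,x}(y) = (s(b)\.g,y)\.G = \pi_{\x\,s(b)}(g\.y)$, a composite of diffeomorphisms. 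This completes all three parts.
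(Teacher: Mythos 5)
Your proof is correct, but it is organized differently from the paper's. You fix the smooth structure on $(X \x F)/G$ at the outset as the quotient-manifold structure coming from Lemma~\ref{lm:GonX-prin-GonXF-prin} and Theorem~\ref{thm:QM}, and then \emph{verify} that the trivializations $\Psi(b,y)=\pi_\x(s(b),y)$ are diffeomorphisms for it, settling the smoothness of $\Psi^{-1}$ by descending the $G$-invariant smooth map $(x,y)\mapsto(\pi(x),\chi(x)\.y)$ through the surjective submersion $\pi_\x$. The paper runs the logic in the opposite direction: it \emph{constructs} the smooth structure by gluing, feeding the same local bijections $\psi_i(b,y)=\pi_\x(\sg_i(b),y)$ and their transition maps $(b,y)\mapsto(b,\phi_{ij}(b)\.y)$ into Proposition~\ref{pr:sets-as-bundles}, so that smoothness of the trivializations is automatic by fiat and no descent lemma is needed for part~(i). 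Your route pays off elsewhere: part~(iii) becomes a one-line application of Theorem~\ref{thm:characterization} to the principal action on $X\x F$ (the paper instead builds the maps $\chi_i$ explicitly), and uniqueness is pinned down cleanly by the uniqueness clause of Theorem~\ref{thm:QM}, since condition~(iii) alone forces $\pi_\x$ to be a submersion for the quotient topology; the paper's uniqueness argument via surjectivity of the differentials of $\pi_\x$ and $\pi'_\x$ is essentially the same descent principle you invoke, just unnamed. The only external ingredient you rely on --- that a smooth $G$-invariant map factors smoothly through a surjective submersion --- is standard and is implicitly used by the paper as well, so I regard your argument as complete; the paper's version has the minor advantage of exhibiting the transition functions $\phi_{ij}$ explicitly, which is convenient for the associated-bundle formalism, while yours makes the provenance and uniqueness of the smooth structure more transparent.
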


\begin{proof}
(i):\enspace
Let $\{U_i\}$ be an open cover of $B$ and consider local sections
$\sg_i \: U_i \to X$. Since $(X,p,B)$ is a principal bundle, the
fibers coincide with the orbits: for $b \in U_i \cap U_j$ there holds
a relation $\sg_j(b) = \sg_i(b) \. \phi_{ij}(b)$, where each
$\phi_{ij} \: U_i \cap U_j \to G$ are smooth.

Define $\psi_i \: U_i \x F\to p_F^{-1}(U_i)
: (b,y) \mapsto \pi_\x(\sg_i(b),y)$. For each $b \in U_i$,
$p_F(\psi_i(b,y)) = x$ and $\psi_i$ restricts to maps
$\psi_{i,b} \: p_F^{-1}(b) \to F$. Each orbit in $p_F^{-1}(b)$
corresponds to a unique $y \in F$ such that the orbit passes through
$(\sg_i(b),y)$; $\psi_{i,b}$ is bijective and hence $\psi_i$ is also
bijective. Because
\begin{align*}
\psi_i^{-1} \circ \psi_j(b,y)
&= \psi_i^{-1}(\pi_\x(\sg_j(b), y))
= \psi_i^{-1}(\pi_\x(\sg_i(b)\.\phi_{ij}(b), y))
\\
&= \psi_i^{-1}(\pi_\x(\sg_i(b), \phi_{ij}(b)\.y))
= (b,\phi_{ij}(b)\.y),
\end{align*}
from Proposition~\ref{pr:sets-as-bundles}, with the maps
$\psi_i^{-1} \circ \psi_j$, one obtains a smooth structure on
$(X \x F)/G$ making $((X \x F)/G,p_F,B,F)$ a smooth fiber bundle with
coordinate representation $\{(U_i,\psi_i)\}$.

By Proposition~\ref{pr:sets-as-bundles}, the smooth structure one
obtains on $(X \x F)/G$ is the unique one for which $(U_i,\psi_i)$ is
a coordinate representation; but this is not yet the uniqueness
property one wants to establish.

(iii):\enspace
Consider the commutative diagram
\[
\xymatrix{\ar @{} [dr]
U_i \x G \x F 
\ar[d]_{1 \x {\,\cdot}} \ar[r]^(.5){\phi_i \x 1} 
& p^{-1}(U_i) \x F \ar[d]^{\pi_\x} 
\\
U_i \x F \ar[r]^(.5){\psi_i} & p_F^{-1}(U_i),
}
\]
with $\phi_i(b,g) = \sg_i(b)\.g$ and $1$ representing both identity
maps.

Set $V_i = p_F^{-1}(U_i)$ to have
$\pi_\x^{-1}(V_i) = (p_F \circ \pi_\x)^{-1}(U_i) = p^{-1}(U_i) \x F$.  
In this way, there are diffeomorphisms
$\chi_i \: V_i \x G \to \pi_\x^{-1}(V_i)
: (\psi_i(b,y), g) \mapsto (\phi_i(b,g), g^{-1}\.y)$, that satisfy the
relations $(\pi_\x \circ \chi_i)(w,g) = w$ and
$\chi_i(w,gh) = \chi_i(w,g)\.h$, for $w \in V_i$, $g,h \in G$. Since
this is a fiber bundle, (iii)~follows.

(ii):\enspace
From the commutation relation
$p \circ \pr_1 = p_F \circ \pi_\x \: X \x F \to B$, one can see that
$\pi_\x$ is fiber preserving. From the commutative diagram above,
one deduces that the maps $\pi_{\x,b} \: F \to p_F^{-1}(b)$, with
$b \in B$ fixed, are diffeomorphisms.

Uniqueness: From~(i), there is already a smooth structure on~$(X \x
F)/G$. Write $X_F'$ to denote the same set with a second smooth
structure. Assume that all properties in the statement remain valid on
replacing $(X \x F)/G$ by~$X_F'$. In the consequent diagram
\[
\xymatrix{\ar @{} [dr]
& X  \x F \ar[dl]_{\pi_\x}  \ar[dr]^{\pi'_\x} & 
\\
(X \x F)/G \ar[rr]^(.5){1} & & X_F'
}
\]
one wants to show that $1$ is a diffeomorphism. Trivially, $1$ is
the only map of sets making the diagram commute. From~(ii), both
$\pi_\x$ and $\pi'_\x$ are smooth. From~(iii), $d_{(x,y)} \pi_\x$ is
surjective for every $x \in P$, $y \in F$ (for this, one only needs
the fiber-bundle structure). In this way, both
$1 \: (X \x F)/G \to X'_F$ and its inverse map
$1' \: X'_F \to (X \x F)/G$ have surjective differentials at each
point. One concludes that this~$1$ is a diffeomorphism; which
completes the proof.
\end{proof}


\section{Sections and equivariant maps} 
\label{sec:equivariants}

In this section, Theorem~\ref{thm:sections-equiv-maps} (taken from
\cite{Husemoller}, see also \cite[Lemma~1.16,
p.~14]{BaumHajacMatthesSzymanski}) is studied in the smooth situation.
Remember that on $(X \x F)/G$ the action is
$(x,y)\.g = (x\.g,g^{-1}\.y)$ and there is a map
$\pi_F \: (X \x F)/G \to X/G : (x,y)\.G \mapsto \pi(x)$.%
\footnote{%
We identify a principal bundle $(X,p,B)$ with $(X,\pi,X/G)$ via the
isomorphism of principal bundles $(1,f)$.}
The following constructions are common for the three situations: a
topological group $G$ acting on a topological space $X$, a discrete
group $G$ acting on a smooth manifold $X$, and a Lie group $G$ acting
on a smooth manifold~$X$.

Before getting to it, recall Proposition~\ref{pr:local-sections}.
Mimicking that situation, if for each $x \in X$ there is an open set
$U \ni \pi(x)$ and a diffeomorphism $\psi \: U \x G \to \pi^{-1}(U)$
such that $\pr_1 = \pi \circ \psi$, we can define the smooth map 
$s_\psi \: U \to \pi^{-1}(U) : u \mapsto \psi(u,e)$ with
$\pi \circ s_\psi = 1_U$ and $x \sim s_\psi(\pi(x))$. To ensure that
the given $x \in X$ belongs to the image of a smooth section, we
require $s_\psi \cdot \tau(x,\psi(u,e))$ to be smooth, which is
justified by Corollary~\ref{cor:diffeomorphic-orbits}. On the other
hand, if $s \: U \to X$ is a local smooth section through the
point~$x$, we define the smooth map $\psi_s \: U \x G \to \pi^{-1}(U)$
by $\psi_s(u,g) := s(u)\.g$. This map enjoys all properties of the
continuous case; there only remains the smoothness of
$\psi_s^{-1} \: \pi^{-1}(U) \to U \x G$, which once again follows from
the relation $y = \psi_s(\pi(y),e) \cdot \tau(\psi_s(\pi(y),e),y)$. We
obtain the following resulting statement about existence of local
smooth sections.

\begin{proposition} 
\label{pr:local-smooth-sections}
If the action of $G$ on $X$ is smooth and free, then $(X,\pi,X/G)$ is
a principal fiber $G$-bundle if and only if there is a local smooth
section through each point of~$X$.
\qed
\end{proposition}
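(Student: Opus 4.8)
The plan is to mimic the proof of Proposition~\ref{pr:local-sections}, upgrading ``continuous'' to ``smooth'' and resolving the one genuinely new point, the smoothness of the inverse trivialization. Both implications have essentially been set up in the paragraph preceding the statement, and I would organize them as the two converse constructions $\psi \mapsto s_\psi$ and $s \mapsto \psi_s$.

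For the direct implication, suppose $(X,\pi,X/G)$ is a smooth principal fiber $G$-bundle with coordinate representation $\{(U_i,\psi_i)\}$ obeying~\eqref{eq:principal-condition}. Each $s_i \: U_i \to \pi^{-1}(U_i) : u \mapsto \psi_i(u,e)$ is smooth and satisfies $\pi \circ s_i = 1_{U_i}$, so it is a local smooth section, though it need not pass through a prescribed $x \in X$. Given such an~$x$, choose $i$ with $\pi(x) \in U_i$; since $s_i(\pi(x))$ lies on the orbit of~$x$, there is a fixed $g_0 = \tau(s_i(\pi(x)),x)$ with $s_i(\pi(x))\.g_0 = x$. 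The translated map $u \mapsto s_i(u)\.g_0 = \psi_i(u,g_0)$ is then a smooth local section through~$x$, right multiplication by the fixed $g_0$ being a diffeomorphism of~$X$.

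For the converse, let $s \: U \to X$ be a smooth local section through~$x$ and set $\psi_s \: U \x G \to \pi^{-1}(U) : (u,g) \mapsto s(u)\.g$, a smooth map (the composite of $s \x 1_G$ with the action $X \x G \to X$). Condition~\eqref{eq:principal-condition} is immediate, and $\psi_s$ is bijective exactly as in Proposition~\ref{pr:local-sections}: applying $\pi$ forces equality of base points, freeness then separates the group coordinates, and any $y$ over~$u$ shares the orbit of~$s(u)$. What remains, and what I expect to be the crux, is the smoothness of $\psi_s^{-1} \: y \mapsto (\pi(y),\tau(s(\pi(y)),y))$.

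In the topological setting this was free, coming from global continuity of $\tau$ on $X \x_G X$; here Corollary~\ref{cor:diffeomorphic-orbits} only yields smoothness of $\tau$ \emph{along} each orbit (restricting $y$ to one orbit fixes $s(\pi(y))$) and says nothing transverse to the orbits. I would therefore finish with the Inverse Function Theorem applied to $\psi_s$. The smooth section already forces $\pi$ to be a submersion, so $d\pi$ is meaningful and $\dim(U \x G) = \dim(X/G) + \dim G = \dim X$; it thus suffices that $d\psi_s$ be everywhere an isomorphism. Its $G$-directional part is the differential of the orbit map $\al_{s(u)}$, injective with image the orbit tangent space, since a free smooth action has orbit maps of constant rank, hence immersions (the constant-rank argument in the proof of Proposition~\ref{pr:orbits-proper-actions} uses only smoothness and freeness). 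Its $U$-directional part, composed with $d\pi$, is the identity on $T_uU$ because $\pi \circ \al^g = \pi$ and $\pi \circ s = 1_U$, so it is injective and complementary to the orbit tangent space, which lies in $\ker d\pi$. Hence $d\psi_s$ is an isomorphism, $\psi_s$ is a local diffeomorphism, and being bijective it is a diffeomorphism satisfying~\eqref{eq:principal-condition}, exhibiting $(X,\pi,X/G)$ as a smooth principal fiber $G$-bundle.
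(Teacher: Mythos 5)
Your forward direction is essentially the paper's: take $s_\psi(u) = \psi(u,e)$ and right-translate so that the section passes through the given point $x$; if anything you are cleaner, since translating by the \emph{fixed} element $g_0=\tau(s_i(\pi(x)),x)$ is a diffeomorphism of $X$ and needs no appeal to Corollary~\ref{cor:diffeomorphic-orbits}. For the converse you genuinely diverge. The paper reads the smoothness of $\psi_s^{-1}$ directly off the formula $\psi_s^{-1}(y) = \bigl(\pi(y),\tau(s(\pi(y)),y)\bigr)$, leaning on the orbitwise smoothness of $\tau$; you correctly observe that Corollary~\ref{cor:diffeomorphic-orbits} only controls $\tau$ along each single orbit and says nothing as $y$ varies transversally, and you replace this by an Inverse Function Theorem argument: the $G$-directional part of $d\psi_s$ is the differential of an orbit map, injective with image in $\ker d\pi$ (the constant-rank argument from Proposition~\ref{pr:orbits-proper-actions} indeed uses only smoothness and freeness), while the $U$-directional part is carried isomorphically by $d\pi$ onto $T(X/G)$ and is therefore transversal to it. This is a more robust route and arguably repairs a soft spot in the paper's own write-up. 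The one step you assert without justification is the dimension count $\dim(X/G)+\dim G=\dim X$, which you need to pass from injectivity of $d\psi_s$ to bijectivity: the existence of a smooth section only gives that $\pi$ is a submersion, hence $\dim X-\dim(X/G)=\dim\ker d\pi\geq\dim G$, not equality. The equality does hold --- for instance because $\psi_s$ is a smooth bijection from $U\x G$ onto the nonempty open set $\pi^{-1}(U)$ and, by Sard's theorem, a smooth map cannot surject onto an open subset of a strictly higher-dimensional manifold; or because in the paper's intended setting the smooth structure on $X/G$ is the one from Theorem~\ref{thm:QM}, which already carries $\dim(X/G)=\dim X-\dim G$ --- but it deserves an explicit sentence.
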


Let $s$ be a cross section of the associated bundle
$((X \x F)/G,\pi_F,X/G)$; consider the map $\phi_s \: X \to F$ defined
by the relation $s(\pi(x)) = (x, \phi_s(x))\.G$ for each $x \in X$; it
is well defined insofar as the action of $G$ on $X$ is free. Because
$(x, \phi_s(x))\.G = (x\.g, \phi_s(x\.g))\.G
= (x\.g, g^{-1}\.\phi_s(x))\.G$ and $s$ is a cross section, the
function $\phi_s$ satisfies the relation
$\phi_s(x\.g) = g^{-1}\.\phi_s(x) \in F$ for all $x \in X$, $g \in G$.
In other words, $\phi_s \: X \to F$ is a \textit{$G$-equivariant map}.

On the other hand, let $\phi \: X \to F$ be any equivariant map. We
associate to~it a function
$s_\phi \: X/G \to (X \x F)/G : \pi(x) \mapsto (x,\phi(x))\.G$. Since
$(x\.g,\phi(x\.g))\.G = (x\.g,g^{-1}\.\phi(x))\.G = (x,\phi(x))\.G$ in
$(X \x F)/G$, this function $s_\phi$ is well defined.

\begin{theorem} 
\label{thm:sections-equiv-maps}
Let $(X,\pi,X/G)$ be a (topological) principal $G$-bundle, and
consider an associated (topological) fiber bundle
$((X \x F)/G,\pi_F,X/G)$, where $F$ is a left $G$-space. The cross
sections $s$ of the latter bundle are in bijective correspondence with
continuous equivariant maps $\phi \: X \to F$.
\end{theorem}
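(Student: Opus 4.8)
The plan is to verify that the two assignments $s\mapsto\phi_s$ and $\phi\mapsto s_\phi$ constructed in the paragraphs preceding the statement are continuous and mutually inverse. The purely set-theoretic part is quick: that $\phi_s$ is equivariant and that $s_\phi$ is well defined has already been checked, and it remains only to note that $s_\phi$ is a section, since $\pi_F(s_\phi(\pi(x))) = \pi_F((x,\phi(x))\.G) = \pi(x)$, so $\pi_F\circ s_\phi = 1_{X/G}$. For the inverse relations I would use freeness of the action: comparing $s_\phi(\pi(x)) = (x,\phi(x))\.G$ with the defining relation $s_\phi(\pi(x)) = (x,\phi_{s_\phi}(x))\.G$ forces $\phi_{s_\phi} = \phi$, while $s_{\phi_s}(\pi(x)) = (x,\phi_s(x))\.G = s(\pi(x))$ is immediate. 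Thus the two assignments are inverse bijections between sections and equivariant maps, and the content of the theorem is that each sends continuous maps to continuous maps.

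The continuity of $s_\phi$, assuming $\phi$ continuous, I would deduce from the universal property of the quotient topology on $X/G$: the composite $s_\phi\circ\pi\: X \to (X\x F)/G$ equals $\pi_\x\circ(1_X,\phi)$, a composition of continuous maps, and since $\pi$ is a quotient map this forces $s_\phi$ to be continuous.

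The main obstacle is the reverse continuity, that of $\phi_s$ when $s$ is continuous, because $\phi_s(x)$ is recovered by ``unquotienting'' the relation $s(\pi(x)) = (x,\phi_s(x))\.G$ to read off the $F$-coordinate, and $\pi_\x$ discards this information globally. I would argue locally over a trivializing cover. Since the bundle is principal, $\tau$ is continuous (Lemma~\ref{lm:proper-action-characterization}) and a continuous local section $\sg\: U \to X$ passes through each point (Proposition~\ref{pr:local-sections}); the sets $\pi^{-1}(U)$ form an open cover of $X$, so it suffices to show $\phi_s$ continuous on each. Over such a $U$ I would trivialize the associated bundle by $\Th_\sg\: U \x F \to \pi_F^{-1}(U)\: (u,y)\mapsto (\sg(u),y)\.G$, which is continuous as $\pi_\x\circ(\sg \x 1_F)$; its inverse is continuous because the continuous, $G$-invariant map $\pi^{-1}(U)\x F \to U\x F\: (x',y')\mapsto \bigl(\pi(x'),\tau(\sg(\pi(x')),x')\.y'\bigr)$ factors through $\pi_\x$, the $G$-invariance resting on the cocycle identity $\tau(\sg(\pi(x')),x'\.g) = \tau(\sg(\pi(x')),x')\,g$ together with the continuity of $\tau$ and of the action on~$F$. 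Evaluating $\Th_\sg^{-1}$ on $s(\pi(x)) = (x,\phi_s(x))\.G$ gives $\pr_2\bigl(\Th_\sg^{-1}(s(\pi(x)))\bigr) = \tau(\sg(\pi(x)),x)\.\phi_s(x)$, so that
\[
\phi_s(x) = \tau(\sg(\pi(x)),x)^{-1}\.\pr_2\bigl(\Th_\sg^{-1}(s(\pi(x)))\bigr)
\]
on $\pi^{-1}(U)$, a composition of continuous maps. This yields the continuity of $\phi_s$ and completes the correspondence.
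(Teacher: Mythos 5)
Your set-theoretic bijection and your continuity argument for $s_\phi$ coincide with the paper's (the quotient-topology argument via $s_\phi\circ\pi=\pi_\x\circ(1\x\phi)$ is exactly what is done there). The divergence, and the one genuine problem, is in the hard direction, the continuity of $\phi_s$. You produce it by trivializing the associated bundle over the image of a continuous local section $\sg\:U\to X$, which you obtain from Proposition~\ref{pr:local-sections}. But that proposition supplies local sections precisely when $(X,\pi,X/G)$ is a quasi-principal \emph{fiber} $G$-bundle, i.e., locally trivial. The hypothesis of the theorem is only that $(X,\pi,X/G)$ is a principal $G$-bundle in this paper's sense: a $G$-bundle with a free and proper continuous action. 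That gives a Hausdorff base and a continuous translation map (Lemma~\ref{lm:proper-action-characterization}) but not local triviality, and free proper actions of topological groups need not admit continuous local sections (this is automatic for Lie groups, by Theorem~\ref{thm:characterization}, but fails in general). So as written your proof establishes the theorem only under an extra local-triviality hypothesis.

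The paper's proof avoids local sections entirely and works pointwise from the continuity of $\tau$: given $x_0$ with $y_0=\phi_s(x_0)$ and a neighborhood $W\ni y_0$, one chooses $W'\ni y_0$ and $N\ni e$ with $N\.W'\subseteq W$ (continuity of the action on~$F$), then $V\ni x_0$ with $\tau\bigl((V\x V)\cap(X\x_G X)\bigr)\subseteq N$ (continuity of~$\tau$), then $U\ni\pi(x_0)$ with $s(U)\subseteq (V\x W')/G$ (continuity of~$s$), and checks $\phi_s\bigl(\pi^{-1}(U)\cap V\bigr)\subseteq W$ by writing $s(\pi(x))=(x',y')\.G=(x,\tau(x,x')\.y')\.G$. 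Your trivialization computation itself is sound --- the $G$-invariance of $(x',y')\mapsto\bigl(\pi(x'),\tau(\sg(\pi(x')),x')\.y'\bigr)$ and the resulting formula for $\phi_s$ are correct, and this is the standard route whenever sections exist, e.g.\ in the smooth settings of Theorems \ref{thm:smooth-cross-sections-G-discrete} and~\ref{thm:smooth-cross-sections-G-Lie-group}. To prove the theorem as stated, either add local triviality to the hypotheses or replace the local-section step by the direct $\tau$-based neighborhood estimate above.
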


\begin{proof} 
By definition, $s_\phi \circ \pi = \pi_F \circ (1 \x \phi)$ and it
follows that $s_\phi$ is continuous because $\pi$ and $\pi_F$~are.
Indeed, $O \subseteq (X \x F)/G$ is open if and only if
$\pi_{\x}^{-1}(O)$ is open in $X \x F$ and
$\pi^{-1} \circ s_\phi^{-1}(O) = (s_\phi \circ \pi)^{-1}(O)
= (\pi_{\x} \circ (1 \x \phi))^{-1}(O)
= (1 \x \phi)^{-1}\circ \pi_{\x}^{-1}(O)$ is open in $X$. Clearly, the
relation $\pi_F(s_\phi(\pi(x))) = \pi_F((x,\phi(x))\.G) = \pi(x)$
holds, and so $s_\phi$ is a cross section.
\[
\xymatrix{\ar @{} [dr]
X \ar[r]^(.4){1\x\phi} \ar[d]_{\pi}  & X \x F \ar[d]^{\pi_\x} 
\\
X/G \ar[r]_{s_\phi} &  (X \x F)/G.
}
\]

Finally, a proof of continuity of $\phi_s$ will establish the theorem,
because the maps $\phi \mapsto s_\phi$ and $s \mapsto \phi_s$ are
inverse to each other.

Let $x_0 \in X$, $y_0 = \phi_s(x_0) \in F$, $b_0 = \pi(x_0) \in X/G$,
and $s(b_0) = (x_0,y_0)\.G \in (X \x F)/G$; and let $W$ be an open
neighborhood of~$y_0$. By continuity of the action of $G$ on~$F$,
there are open neighborhoods $W'$ of~$y_0$ in $F$ and $N$ of~$e$ in
$G$ such that $N\.W' \subseteq W$. Since the translation function
$\tau \: X \x_G X \to G$ of $(X,\pi,X/G)$ is continuous,
$\tau^{-1}(N)$ is open in $X \x_G X$ with its relative topology, and
since $\tau(x_0,x_0) = e \in N$, there is an open neighbourhood $V$ of
$x_0$ in $X$ such that $\tau((V \x V) \cap (X \x_G X)) \subseteq N$.
Since $s$ is continuous, there is an open neighborhood $U$ of~$b_0$
such that $s(U) \subseteq (V \x W')/G$.

Consider $V' := \pi^{-1}(U) \cap V$. The relation
$s(U) \subseteq (V' \x W')/G$ holds, with $\pi(V') \subseteq U$. The
aim is to prove that $\phi_s(V') \subseteq W$. Let $x\in V'$ and
$b = \pi(x) \in U$; then $s(b) = (x',y')\.G$, where $x' \in V'$ and
$y' \in W'$. Since $s(\pi(x)) = (x',y')\.G = (x\.\tau(x,x'),y')\.G
= (x,\tau(x,x')\.y')\.G$, we deduce that
$\phi_s(x) = \tau(x,x')\.y' \in N\.W' \subseteq W$ and so
$\phi_s(V') \subseteq W$; the result follows.
\end{proof}

In Theorem~\ref{thm:sections-equiv-maps}, the equivariant maps
$\phi \: X \to F$ are continuous and the setting is the topological
one of a principal $G$-bundle. As has been the driving motivation of
this paper, we now wish to rewrite this result in the smooth manifold
situation. The same definitions of $s_\phi$ and~$\phi_s$ as in
Theorem~\ref{thm:sections-equiv-maps} are used. If one shows that
these two maps are smooth, the corresponding theorems (considering
first the case of a discrete group $G$) will be proved since
$\phi \mapsto s_\phi$ and $s\mapsto \phi_s$ are mutually inverse. In
the next result, the group $G$ is taken to be discrete and we work in
the context of a discrete principal $G$-bundle: see
Definition~\ref{def:free-covering-G-space}.

\begin{theorem} 
\label{thm:smooth-cross-sections-G-discrete}
Let $(X,\pi,X/G)$ be a discrete smooth principal $G$-bundle and take
an associated smooth fiber bundle $((X \x F)/G,\pi_F,X/G)$. The smooth
cross sections $s$ of the latter bundle are in bijective
correspondence with smooth equivariant maps $\phi \: X \to F$.
\end{theorem}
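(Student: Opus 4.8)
The plan is to note that Theorem~\ref{thm:sections-equiv-maps} has already done most of the work: the assignments $\phi \mapsto s_\phi$ and $s \mapsto \phi_s$ are mutually inverse bijections, and each is continuous. Consequently nothing remains but to upgrade continuity to smoothness in both directions. The decisive simplification in the discrete setting is that both quotient projections are local diffeomorphisms: $\pi \: X \to X/G$ by Theorem~\ref{thm:DoCarmo}, and $\pi_\x \: X \x F \to (X \x F)/G$ by Lemma~\ref{lm:covering-space-implies-covering-space}. I would therefore localize and invert.

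For the smoothness of $s_\phi$ I would work locally. Around any point of $X/G$, pick $x \in X$ projecting to it and a neighborhood $U$ of~$x$ on which $\pi|_U$ is a diffeomorphism onto the open set $\pi(U)$. From the defining relation $s_\phi \circ \pi = \pi_\x \circ (1 \x \phi)$ one reads off $s_\phi = \pi_\x \circ (1 \x \phi) \circ (\pi|_U)^{-1}$ on $\pi(U)$. Each factor is smooth: $(\pi|_U)^{-1}$ as the inverse of a local diffeomorphism, $1 \x \phi$ because $\phi$ is smooth by hypothesis, and $\pi_\x$ as a local diffeomorphism. Hence $s_\phi$ is smooth.

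For the converse I would localize using $\pi_\x$ instead. Fix $x_0 \in X$ and, exploiting the covering-space character of the action on $X \x F$, choose a neighborhood $W$ of $(x_0,\phi_s(x_0))$ small enough that $\pi_\x|_W$ is a diffeomorphism onto its image and that $W \cap W\.g = \emptyset$ for $g \neq e$; the latter guarantees that $\pi_\x|_W$ selects a single representative. By the continuity of $\phi_s$ from Theorem~\ref{thm:sections-equiv-maps}, the map $x \mapsto (x,\phi_s(x))$ carries a sufficiently small neighborhood of $x_0$ into~$W$, and there the relation $\pi_\x(x,\phi_s(x)) = s(\pi(x))$ inverts to give $\phi_s = \pr_2 \circ (\pi_\x|_W)^{-1} \circ s \circ \pi$. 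This exhibits $\phi_s$ as a composition of smooth maps, hence smooth.

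The main point to get right is the bookkeeping of representatives in this second step: one must ensure that $(\pi_\x|_W)^{-1}$ returns exactly the representative $(x,\phi_s(x))$ rather than one of its $G$-translates. This is precisely what the covering-space property secures, by forcing $\pi_\x|_W$ to be injective on the chosen~$W$, and it is why the already-established continuity of $\phi_s$ is needed, to keep $x \mapsto (x,\phi_s(x))$ inside $W$. Everything else is routine composition of smooth maps; no transversality or quotient-manifold input is required beyond the local-diffeomorphism statements of Theorem~\ref{thm:DoCarmo} and Lemma~\ref{lm:covering-space-implies-covering-space}.
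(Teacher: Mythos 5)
Your proof is correct and follows essentially the same route as the paper: both directions reduce to compositions of smooth maps using the local-diffeomorphism property of $\pi$ (Theorem~\ref{thm:DoCarmo}) and of $\pi_\x$ (via Lemma~\ref{lm:covering-space-implies-covering-space}), with Theorem~\ref{thm:sections-equiv-maps} supplying the bijection and the continuity of~$\phi_s$. If anything, your explicit tracking of the representative $(x,\phi_s(x))$ inside a single sheet $W$ is more careful than the paper's identifications $X \x_G X \equiv X/G \equiv X \x G$ and $(X \x F)/G \equiv X \x F$, which are only valid locally.
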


Let $(X,p,B)$ be a smooth principal $G$-bundle, and an associated
smooth fiber bundle $((X \x F)/G,p_F,B)$. The smooth cross sections
$s$ of the latter bundle are in bijective correspondence with smooth
equivariant maps $\phi \: X \to F$.

\begin{proof}
By Theorem~\ref{thm:DoCarmo} the projection $\pi \: X \to X/G$ is a
local diffeomorphism and the identity
$s_\phi\circ \pi = \pi_F\circ (1 \x \phi)$ shows that $s$ is smooth,
since Lemma~\ref{lm:covering-space-implies-covering-space} proves that
$\pi_F \: (X \x F)/G \to B$ is smooth. From the proof of
Theorem~\ref{thm:sections-equiv-maps}, it follows that
$\phi_s \: X \to F$ given by the relation
$s(\pi(x)) = (x,\phi_s(x))\.G$ for each $x \in X$, with $s$ a cross
section, is an equivariant map; it remains only to show that $\phi_s$
is smooth.

The keys for that are Corollaries
\ref{cor:quotient-manifold-and-tau-smooth-1}
and~\ref{cor:quotient-manifold-and-tau-smooth-2}, and
Lemma~\ref{lm:covering-space-implies-covering-space}. They state that
$X \x_G X$, $X \x G$ and $X/G$ are (locally) diffeomorphic with smooth
translation maps, and that $(X \x F)/G$ is diffeomorphic to $X \x F$.
We shall identify $X \x_G X \equiv X/G \equiv X \x G$; $\tau$ with the
projection $\pr_2 \: (x,g) \mapsto g$; and
$(X \x F)/G \equiv X \x F$. Under these identifications, $s$~is the
smooth map $X \to X \x F : x \mapsto (x,\phi_s(x))$, and the result
follows from the relation $\phi_s = \pr_2 \circ s \: X \to F$.
\end{proof}

Our very last result deals with the case of a Lie group $G$ acting
smoothly, freely and properly on a smooth manifold~$X$, with another
smooth action on a smooth manifold~$F$. See \cite[Proposition~1.7,
p.~18]{Berlineetal}.

\begin{theorem} 
\label{thm:smooth-cross-sections-G-Lie-group}
Let $(X,p,B)$ be a smooth principal $G$-bundle, and an associated
smooth fiber bundle $((X \x F)/G,p_F,B)$. The smooth cross sections
$s$ of the latter bundle are in bijective correspondence with smooth
equivariant maps $\phi \: X \to F$.
\end{theorem}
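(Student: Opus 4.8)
The plan is to reduce everything to the topological statement, Theorem~\ref{thm:sections-equiv-maps}, from which we already know that $\phi \mapsto s_\phi$ and $s \mapsto \phi_s$ are well defined (by freeness), mutually inverse, and continuous. It therefore remains \emph{only} to verify that each correspondence carries smooth objects to smooth objects, and for this I would exploit the genuine manifold structures now in place: by Theorem~\ref{thm:QM} the projection $\pi \: X \to X/G$ is a surjective submersion; by Lemma~\ref{lm:GonX-prin-GonXF-prin} and Theorem~\ref{thm:associated-bundle} the quotient map $\pi_\x \: X \x F \to (X \x F)/G$ and the associated-bundle trivializations are smooth; and by Corollary~\ref{cor:diffeomorphic-orbits} the translation map is smooth along each orbit. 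Throughout, the action $G \x F \to F$ is smooth.

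First I would treat $s_\phi$. Given a smooth equivariant $\phi$, the defining identity $s_\phi \circ \pi = \pi_\x \circ (1 \x \phi)$ exhibits $s_\phi \circ \pi$ as a composite of smooth maps. Since $\pi$ is a surjective submersion, the characteristic property of such maps (a map out of the quotient is smooth as soon as its precomposite with $\pi$ is) forces $s_\phi$ itself to be smooth.

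The substantive step is the smoothness of $\phi_s$, where the translation map intervenes and no global smooth structure on $X \x_G X$ is available; I would argue locally. By Proposition~\ref{pr:local-smooth-sections} each point of $X$ lies over an open set $V \subseteq X/G$ carrying a smooth local section $\sg \: V \to X$, and the associated trivialization $\psi_\sg \: V \x G \to \pi^{-1}(V) : (b,g) \mapsto \sg(b)\.g$ is a diffeomorphism. Its inverse is smooth, so the second-component map $\chi_\sg \: \pi^{-1}(V) \to G : x \mapsto \tau(\sg(\pi(x)),x)$ is smooth. Over the same $V$ the associated bundle trivializes, by Theorem~\ref{thm:associated-bundle}, through the diffeomorphism $\Psi_\sg \: V \x F \to p_F^{-1}(V) : (b,y) \mapsto (\sg(b),y)\.G$. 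Evaluating the section on $\sg(V)$ gives $s(b) = (\sg(b),\phi_s(\sg(b)))\.G = \Psi_\sg\bigl(b,\phi_s(\sg(b))\bigr)$, so applying the smooth map $\Psi_\sg^{-1}$ to the smooth section $s$ and projecting shows that $b \mapsto \phi_s(\sg(b))$ is smooth on $V$.

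Finally I would recover $\phi_s$ on all of $\pi^{-1}(V)$ by equivariance. For $x \in \pi^{-1}(V)$ one has $x = \sg(\pi(x))\.\chi_\sg(x)$, whence
$$
\phi_s(x) = \chi_\sg(x)^{-1}\.\phi_s\bigl(\sg(\pi(x))\bigr).
$$
The right-hand side is a composite of smooth maps: inversion on $G$, the submersion $\pi$, the map $b \mapsto \phi_s(\sg(b))$ just shown smooth, and the smooth action $G \x F \to F$. Hence $\phi_s$ is smooth on $\pi^{-1}(V)$, and since such sets cover $X$, the map $\phi_s$ is smooth, completing the correspondence. The one delicate point, and the crux of the argument, is exactly this control of $\tau$: although $\tau$ need not be smooth on $X \x_G X$ as a whole, its evaluation against a fixed smooth section, $x \mapsto \tau(\sg(\pi(x)),x)$, coincides with the smooth trivialization coordinate $\chi_\sg$, which is precisely what Proposition~\ref{pr:local-smooth-sections} together with Corollary~\ref{cor:diffeomorphic-orbits} furnish.
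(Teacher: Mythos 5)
Your proposal is correct, and it reaches the conclusion by a genuinely different (and in one respect more careful) route than the paper. The paper's proof works entirely in arbitrary local charts: it writes the identity $s \circ \pi = \pi_\x \circ (1 \x \phi)$ in coordinates and then asserts that, because $\al \circ \pi \circ \vf^{-1}$ and $\bt \circ \pi_\x \circ (\vf \x \psi)^{-1}$ are smooth surjective submersions, the smoothness of $\bt \circ s \circ \al^{-1}$ is \emph{equivalent} to that of $(\vf \x \psi) \circ (1 \x \phi) \circ \vf^{-1}$. The forward implication (smooth $\phi$ gives smooth $s_\phi$) is immediate from the characteristic property of the surjective submersion $\pi$, exactly as in your first step. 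The reverse implication, however, is not a formal consequence of $\pi_\x$ being a submersion alone --- in general $g \circ f$ smooth with $g$ a surjective submersion does not force $f$ to be smooth --- and the paper leaves implicit the extra structure needed to close that gap. Your argument supplies precisely that structure: you choose a smooth local section $\sg$ of $\pi$, use the principal trivialization $\psi_\sg(b,g) = \sg(b)\.g$ to obtain the smooth map $\chi_\sg(x) = \tau(\sg(\pi(x)),x)$, use the associated trivialization $\Psi_\sg$ to see that $b \mapsto \phi_s(\sg(b))$ is smooth, and then recover $\phi_s(x) = \chi_\sg(x)^{-1}\.\phi_s(\sg(\pi(x)))$ from equivariance as a composite of smooth maps. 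This is the same circle of ideas the paper deploys elsewhere (Theorem~\ref{thm:characterization}, Theorem~\ref{thm:associated-bundle}, Corollary~\ref{cor:diffeomorphic-orbits}), but assembled so that the delicate direction is fully justified; what the paper's chart computation buys in brevity, your local-section argument buys in rigor.
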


\begin{proof}
From the proof of Theorem~\ref{thm:sections-equiv-maps} we already
know that $\phi_s$ is an equivariant map and that the maps $s_\phi$
and $\phi_s$ are continuous. We must show that these functions are
smooth. Let $\vf \: U\subseteq X \to \vf(U) \subseteq \bR^n$,
$\psi \: W \subseteq F \to \psi(W) \subseteq \bR^l$,
$\al \: V \subseteq X/G \to \al(V) \subseteq \bR^{n-k}$, and
$\bt \: O \subseteq (X \x F)/G \to \bt(O) \subseteq \bR^{n+l-k}$ be
respective local charts.

From the relation $s\circ \pi = \pi_F \circ (1 \x \phi)$,
restricting to appropriate intersections, we obtain
$$
\bigl( \bt \circ s \circ \al^{-1} \bigr) 
\circ \bigl( \al \circ \pi \circ \vf^{-1} \bigr)
= \bigl( \bt \circ \pi_\x \circ (\vf \x \psi)^{-1} \bigr)
\circ \bigl( (\vf \x \psi) \circ (1 \x \phi) \circ \vf^{-1} \bigr).
$$
 
Since $\al \circ \pi \circ \vf^{-1}$ and 
$\bt \circ \pi_\x \circ (\vf \x \psi)^{-1}$ are smooth surjective maps
from $\bR^n$ to $\bR^{n-k}$ and from $\bR^{n+l}$ to $\bR^{n+l-k}$,
respectively, it follows that $\bt \circ s \circ \al^{-1}$ from
$\bR^{n-k}$ to $\bR^{n+l-k}$ is smooth if and only if
$(\vf \x \psi) \circ (1 \x \phi) \circ \vf^{-1}$ from $\bR^n$ to
$\bR^{n+l}$ is smooth, and the result follows.
\end{proof}


\section{Conclusion} 
\label{sec:finito}

The search for an appropriate noncommutative replacement for principal
bundles has been an objective of considerable significance over the
last 30~years, as is established in a rough timeline starting around
\cite{BrzezinskiMajid} until \cite{Tobolski} (to mention only two). 
This task relies on different aspects. First, a clear understanding
and characterization of what is a (commutative) principal bundle.
Second, the desire to solve problems whose solution eludes the already
known tools (see \cite{HajacMajid}, among many others). Third, the
generalization of common or similar properties with other mathematical
objects (e.g., Galois extensions as in~\cite{BrzezinskiHajac}).
Fourth, the intrinsic nature of noncommutative geometry, wherein
spaces of points get replaced by possibly noncommutative algebras of
functions over a putative nonexistent space (see for
example~\cite{Kassel}).

On the latter aspects we hope to report soon; these notes focus on the
first of those motivations. For a topological $G$-bundle, the action
is free if and only if the canonical map is injective, and the action
is continuous if and only if the canonical map is continuous. In the
presence of a free action the translation map is well defined, and its
continuity is equivalent to that of the inverse of the canonical map.
For the canonical map of a free and continuous action to be a
homeomorphism, it requires continuity of its inverse, that is to say,
the closedness of the canonical map. This is achieved by asking that
the action be proper. That request is in turn equivalent to having a
Hausdorff topology on the quotient (base) space. A principal (i.e.,
free and proper) action guarantees that each orbit of the bundle is
homeomorphic to the group. What is missing to obtain a principal
bundle is for the base space to be Hausdorff. The final conclusion of
Sections \ref{sec:trans-map-topol-spaces}
and~\ref{sec:topological-bundles} is that existence of a continuous
translation map does not characterize a topological principal
$G$-bundle: in the more general setting the base space need not be
Hausdorff.

This situation is nicely illustrated with the Milnor construction
which produces a principal action of the group on the total space,
that becomes a principal bundle insofar as the group is Hausdorff.

Proceeding from the topological to the smooth situation, in these
notes we considered as an intermediate step the case of a discrete
group acting on a smooth manifold. Thereby the quotient space is
Hausdorff and we obtain a principal bundle with a continuous
translation~map.

For the case of a Lie group acting on a smooth manifold, the
requirement of a Hausdorff topology is replaced by having a smooth
structure on the quotient space. Such is ensured by the well known
Quotient Manifold Theorem. As indicated in
Corollary~\ref{cor:diffeomorphic-orbits}, the continuity of the
translation map is reflected in the smoothness of its restriction to
each orbit.

An obligatory step, more than a detour, is our short study of
associated fiber bundles, equivariant maps and continuous or smooth
sections. By Proposition~\ref{thm:smooth-cross-sections-G-Lie-group}
and its immediate adaptation to the topological case, a smooth (or
continuous) free action produces a principal bundle if and only if
there is a local smooth (respectively, continuous) section through
each point of the total space. For an associated fiber bundle, in any
of the three situations dealt with here local cross sections are in
bijective correspondence with smooth (or continuous) equivariant maps
from the total space to the associated fiber.

\subsection*{Acknowledgments}

Many thanks to Joseph C. Várilly for several discussions. Support from
the Vicerrectoría de Investigación of the Universidad de Costa Rica is
acknowledged.




\end{document}